\documentclass[11pt,a4paper]{article}
\pdfoutput=1
\title{Computing period matrices and the Abel-Jacobi map of superelliptic curves}
\author{Pascal Molin, Christian Neurohr}

\usepackage[a4paper,width=150mm,top=25mm,bottom=25mm]{geometry}
\usepackage[utf8]{inputenc}
\usepackage[english]{babel}
\usepackage[T1]{fontenc}
\usepackage{amsmath,amsfonts}
\usepackage{amsthm}
\usepackage{amssymb}
\usepackage{mathtools}
\usepackage{lmodern}
\usepackage{graphicx}
\usepackage{booktabs}
\usepackage{float}
\usepackage{subfig}
\usepackage{cite}
\usepackage{fancyhdr}
\usepackage{multirow}
\usepackage{nicefrac}
\usepackage{wrapfig}
\usepackage{color}
\usepackage{standalone}
\usepackage{bigints}
\usepackage{varioref}
\usepackage{tikz}
\usepackage{xr}
\usepackage{hyperref}


\newcommand{\eq}{\Leftrightarrow}

\newcommand{\To}{\longrightarrow}

\newcommand{\isom}{\cong}


\newcommand{\Z}{\mathbb{Z}}

\newcommand{\R}{\mathbb{R}}
\newcommand{\C}{\mathbb{C}}

\renewcommand{\P}{\mathbb{P}}


\DeclarePairedDelimiter\floor{\lfloor}{\rfloor}
\newcommand\set[1]{\left\{#1\right\}}
\newcommand\abs[1]{\left|#1\right|}

\DeclareMathOperator{\asinh}{asinh}

\DeclareMathOperator{\Jac}{Jac}
\DeclareMathOperator{\Aut}{Aut}
\DeclareMathOperator{\GL}{GL}
\DeclareMathOperator{\PSl}{PSL}
\DeclareMathOperator{\pr}{pr_x} 
\DeclareMathOperator{\ord}{ord}
\renewcommand{\Re}{\operatorname{Re}}
\renewcommand{\Im}{\operatorname{Im}}
\renewcommand{\div}{\operatorname{div}}
\newcommand{\Div}{\operatorname{Div}}
\renewcommand{\deg}{\operatorname{deg}}
\newcommand{\Prin}{\operatorname{Prin}}
\newcommand{\supp}{\operatorname{supp}}
\renewcommand{\ord}{\operatorname{ord}}

\newcommand{\cu}{\mathcal{C}} 
\newcommand{\caff}{\mathcal{C}_{\text{aff}}} 
\newcommand{\cafft}{\tilde{\mathcal{C}}_{\text{aff}}} 
\newcommand{\X}{\{  x_1,\dots,x_d \}} 
\renewcommand\d{\mathrm{d}\,}
\newcommand{\dx}{\mathrm d x}

\newcommand{\dt}{\mathrm d t}
\newcommand{\du}{\mathrm d u}
\newcommand{\w}{\omega} 
\newcommand{\wtij}{\omega_{\tilde{i},j}}
\newcommand{\W}{\mathcal{W}} 
\newcommand{\WM}{\mathcal{W}^{\text{mer}}} 
\newcommand{\hd}{\Omega^1_{\mathcal{C}}} 
\newcommand{\homo}{H_1(\cu,\Z)} 
\newcommand{\cyab}{\gamma_{a,b}}
\newcommand{\cycd}{\gamma_{c,d}}
\newcommand{\cyad}{\gamma_{a,d}}
\newcommand{\cybd}{\gamma_{b,d}}
\newcommand{\cyabl}{\gamma_{a,b}^{(l)}}
\newcommand{\cyabk}{\gamma_{a,b}^{(k)}}

\newcommand{\cycdl}{\gamma_{c,d}^{(l)}}
\newcommand{\cyadl}{\gamma_{a,d}^{(l)}}

\newcommand{\OA}{\Omega_A} 
\newcommand{\OB}{\Omega_B} 
\newcommand{\OC}{\Omega_{\Gamma}} 
\newcommand{\AJ}{\mathcal{A}} 
\newcommand{\cab}{C_{a,b}}
\newcommand{\ccd}{C_{c,d}}
\newcommand{\cbd}{C_{b,d}}
\newcommand{\cad}{C_{a,d}}
\newcommand{\yab}{y_{a,b}}
\newcommand{\ycd}{y_{c,d}}

\newcommand{\yaxp}{y_{a,x_P}}
\newcommand{\mr}{^{\frac{1}{m}}}
\newcommand{\vab}{V_{a,b}}

\newcommand{\vcd}{V_{c,d}}
\newcommand{\uab}{u_{a,b}}
\newcommand{\ubd}{u_{b,d}}
\newcommand{\uad}{u_{a,d}}
\newcommand{\ucd}{u_{c,d}}
\newcommand{\uaxp}{u_{a,x_P}}
\newcommand{\xab}{x_{a,b}}
\newcommand{\xad}{x_{a,d}}
\newcommand{\xbd}{x_{b,d}}
\newcommand{\xaxp}{x_{a,x_P}}
\newcommand{\yt}{\tilde{y}}
\newcommand{\ytab}{\tilde{y}_{a,b}}
\newcommand{\ytcd}{\tilde{y}_{c,d}}
\newcommand{\ytbd}{\tilde{y}_{b,d}}
\newcommand{\ytad}{\tilde{y}_{a,d}}
\newcommand{\ytaxp}{\tilde{y}_{a,x_P}}
\newcommand{\xt}{\tilde{x}}
\DeclareMathOperator{\dist}{dist}

\newcommand\cmul{\mathcal M}
\newcommand\ctrig{\mathcal T}
\newcommand\ctot{\mathcal E}

\renewcommand\d{\mathrm{d}}

\DeclareUnicodeCharacter{3B1}{\alpha}
\DeclareUnicodeCharacter{3BB}{\lambda}
\DeclareUnicodeCharacter{3C4}{\tau}
\DeclareUnicodeCharacter{3C0}{\pi}
\DeclareUnicodeCharacter{3B7}{\eta}
\DeclareUnicodeCharacter{3C6}{\varphi}
\DeclareUnicodeCharacter{3B1}{\alpha}
\DeclareUnicodeCharacter{3B2}{\beta}
\DeclareUnicodeCharacter{3BB}{\lambda}
\DeclareUnicodeCharacter{3B3}{\gamma}
\DeclareUnicodeCharacter{3B4}{\delta}
\DeclareUnicodeCharacter{3C4}{\tau}
\DeclareUnicodeCharacter{3C0}{\pi}
\DeclareUnicodeCharacter{3B5}{\varepsilon}
\DeclareUnicodeCharacter{3C9}{\omega}
\DeclareUnicodeCharacter{3C1}{\rho}

\theoremstyle{plain}
\newtheorem{thm}{Theorem}[section]
\newtheorem{prop}[thm]{Proposition}
\newtheorem{lemma}[thm]{Lemma}
\newtheorem{coro}[thm]{Corollary}

\theoremstyle{definition}
\newtheorem{defn}[thm]{Definition}

\theoremstyle{remark}
\newtheorem{rmk}[thm]{Remark}

\hypersetup{linkbordercolor  ={1 1 1}}
\hypersetup{citebordercolor  ={1 1 1}}
\hypersetup{urlbordercolor  ={1 1 1}}

\setlength\abovedisplayshortskip{5pt}
\setlength\belowdisplayshortskip{5pt}
\setlength\abovedisplayskip{5pt}
\setlength\belowdisplayskip{5pt}
\newcommand{\abstand}{{\\[0.25\baselineskip]}}

\usetikzlibrary{arrows}
\usetikzlibrary{shapes.misc}
\usetikzlibrary{decorations.markings}
\tikzset{cross/.style={cross out, draw=black, minimum size=2*(#1-\pgflinewidth), inner sep=0pt, outer sep=0pt},cross/.default={1pt}}
\tikzset{
    halfarrow1/.style={postaction={decorate},
        decoration={markings,mark=at position .5 with
        {\arrow[line width=0.4mm]{>}}}} }
\tikzset{
    halfarrow2/.style={postaction={decorate},
        decoration={markings,mark=at position .5 with
        {\arrow[line width=0.4mm]{<}}}} }


\DeclareFontFamily{OMX}{lmex}{}
\DeclareFontShape{OMX}{lmex}{m}{n}{<-> lmex10}{}
\def\biblio{\bibliographystyle{plain}\bibliography{main}}

\externaldocument[m-]{main}

\begin{document}

\def\biblio{}

\maketitle


  \section{Introduction}

  The Abel-Jacobi map links a complex curve to a complex torus.
  In particular the matrix of periods allows to define the Riemann
  theta function of the curve, which is an object of central interest in
  mathematics and physics: let us
  mention the theory of abelian functions or integration of partial differential
  equations.

  In the context of cryptography and number theory, periods also appear
  in the BSD conjecture or as a tool to identify isogenies or to find
  curves having prescribed complex multiplication \cite{vanWamelen06}.
  For such diophantine applications, it is necessary to compute
  integrals to large precision (say thousand digits) and to have
  rigorous results.

  \subsection{Existing algorithms and implementations}

  For genus 1 and 2, methods based on isogenies (AGM \cite{CremonaAGM13},
  Richelot \cite{BostMestre88}, Borchardt mean \cite{Labrande16})
  make it possible to compute periods to arbitrary precision in almost
  linear time. However, these techniques scale very badly when the genus grows.

  For modular curves, the modular symbols machinery and termwise integration of
  expansions of modular forms give excellent algorithms
  \cite[\S 3.2]{Mascot13}.

  For hyperelliptic curves of arbitrary genus, the Magma implementation
  due to van Wamelen \cite{vanWamelen06} computes period matrices and the Abel-Jacobi map.
  However, it is limited in terms of precision (less
  than $2000$ digits) and some bugs are experienced on
  certain configurations of branch points. The shortcomings of this implementation motivated our
  work. Using a different strategy
  (integration along a tree instead of around Voronoi cells)
  we obtain a much faster, more reliable algorithm and rigorous results.

  For general algebraic curves, there is an implementation in Maple
  due to Deconinck and van Hoeij \cite{DeconinckvanHoeij01}.
  We found that this package is not suitable for high precision purposes.

  We also mention the Matlab implementations due to Frauendiener and Klein for hyperelliptic curves \cite{FrauendienerKlein2015}
    and for general algebraic curves \cite{FrauendienerKlein2011}.
  
  Moreover, a Sage implementation for general algebraic curves due to Nils Bruin and Alexandre Zotine is in progress.
  
  \subsection{Main result}

  This paper adresses the problem of computing period matrices and the
  Abel-Jacobi map of algebraic curves given by an affine equation of the form  (see Definition \ref{m-def:se_curve})
  \begin{equation*}
  y^m = f(x), \quad m > 1, f \in \C[x] \text{ separable of degree} \deg(f) = n \ge 3.
  \end{equation*}
  They generalize
  hyperelliptic curves and are usually called \textit{superelliptic curves}.

  We take advantage of their specific geometry to obtain the following
  (see Theorem \ref{m-thm:complexity_integrals})
  \begin{thm}
      Let $\cu$ be a superelliptic curve of genus $g$ defined by an equation $y^m=f(x)$
      where $f$ has degree $n$.
      We can compute a basis of the period lattice to
      precision $D$ using $$O(n(g+\log D)(g+D)^2\log^{2+\varepsilon} (g+D)) \text{ binary operations,}$$
      where $\epsilon>0$ is chosen so that
      the multiplication of precision $D$ numbers has complexity
      $O(D\log^{1+\epsilon}D)$.
  \end{thm}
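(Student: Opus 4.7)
The plan is to assemble the homology description, the quadrature bound and the arithmetic cost estimates developed in the preceding sections. Recall first that a basis of $\homo$ is supported on the $n-1$ cycles $\cyab$ attached to the edges of a spanning tree on the $n$ branch points of $f$, lifted to the $m$ sheets of the cover; by sheet symmetry the entire period matrix is determined by the $(n-1)g$ elementary integrals
\[
I_{a,b}^{i,j} \;=\; \int_a^b \frac{x^{i-1}\,\dx}{y^j},
\]
one per tree edge $[a,b]$ and per element of the fixed basis $(\omega_{i,j})$ of $\hd$. The linear algebra recombining these numbers into the $g\times 2g$ period matrix has negligible cost.

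The first ingredient is to push each $I_{a,b}^{i,j}$ onto $[-1,1]$ by the affine change of variable sending $[a,b]$ to the reference interval. The pulled-back integrand is analytic on a slit neighbourhood of $[-1,1]$ but carries algebraic singularities at the two endpoints. The quadrature theorem of the preceding section then guarantees spectral convergence: $N = O(g+\log D)$ sample points suffice to reach absolute precision $D$. The $g$ term reflects the polynomial degradation of the integrand as the differential index $j \le g$ grows; the $\log D$ term is the standard spectral rate once the endpoint singularities have been resolved by the change of variable.

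The second ingredient is the cost of one evaluation of the whole tuple $(I_{a,b}^{i,j})_{i,j}$ at a single quadrature node. A Horner pass evaluates $f(\phi(t))$ in $n$ multiplications; a Newton-based $m$-th root produces $y = f(\phi(t))^{1/m}$ in $O(\log m)$ multiplications plus one elementary evaluation of $O(\log(g+D))$ overhead; and a Horner-type reuse of the powers of $x$ and $1/y$ assembles the $g$ integrand values in $O(g)$ further multiplications. Working at the inflated precision $D' = O(g+D)$ needed to absorb the cancellation inherent to the scheme, each multiplication costs $O((g+D)\log^{1+\varepsilon}(g+D))$ by hypothesis; accounting for the logarithmic overhead of the transcendental substeps and using $n+g = O(g+D)$ in the relevant range of parameters, one quadrature node is handled in $O\bigl((g+D)^2\log^{2+\varepsilon}(g+D)\bigr)$ binary operations.

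Multiplying $n-1$ tree edges by $N = O(g+\log D)$ nodes per edge by the per-node cost above produces exactly the claimed bound. I expect the genuine obstacle not to lie in this assembly, but in the two preparatory results that it borrows from the earlier sections: the uniform quadrature estimate $N = O(g+\log D)$, whose delicate point is the analytic continuation of $f^{1/m}$ through a Bernstein-type ellipse avoiding the other branch points, and the precision-loss analysis certifying that $D' = O(g+D)$ working bits indeed suffice to guarantee $D$ correct output digits.
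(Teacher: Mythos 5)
There is a genuine gap in the middle of your assembly, and it concerns the quadrature count. You assert that ``$N = O(g+\log D)$ sample points suffice to reach absolute precision $D$,'' calling this the standard spectral rate. But spectral (geometric) convergence means the error decays like $e^{-cN}$, which forces $N = O(D)$ to reach a tolerance of $e^{-D}$, not $N = O(\log D)$. The paper's explicit quadrature schemes (Theorems \ref{m-thm:de_int} and \ref{m-thm:gc_int}) use $N(D) = O(D\log D)$ points for the double-exponential change of variable and $N(D) = O(D)$ for Gauss--Chebyshev; no one-dimensional analytic quadrature can attain $O(\log D)$ nodes for $D$ digits. Your per-node cost is correspondingly inflated in the compensating direction: processing a single node of a tree edge takes one transcendental evaluation, $n-2$ multiplications for $\ytab(u_k)$, and $O(g)$ further multiplications for the integrand values, all at internal precision $D' = D + O(g)$, i.e.\ $O\bigl((g + \log(g{+}D))\,(g{+}D)\log^{1+\varepsilon}(g{+}D)\bigr)$ since $n = O(g)$ by the genus formula. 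Your step ``using $n+g = O(g+D)$'' is formally true but deliberately loose; it introduces a spurious factor $(g{+}D)/(g{+}\log D)$ that raises the per-node bound to $(g{+}D)^2\log^{2+\varepsilon}(g{+}D)$.

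The two errors cancel in the product — you have in effect migrated the factor $D\log D$ from the node count into the per-node cost — so the final displayed bound is correct, but the factorization you offer does not describe the actual cost distribution and would not survive scrutiny as a proof. The valid decomposition, and the one the paper uses, is: work at internal precision $D' = D + O(g)$; take $N = O\bigl((g{+}D)\log(g{+}D)\bigr)$ nodes per edge; each node costs $O\bigl((g + \log(g{+}D))\,\cmul(g{+}D)\bigr)$; multiply by the $n-1$ tree edges; and absorb the symplectic reduction and matrix inversion into lower-order terms. That gives the claimed bound through factors that are each individually correct.
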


  \subsection{Rigorous implementation}
  \label{subsec:arb}

  The algorithm has been implemented in C using the Arb library \cite{Johansson2013arb}.
  This system represents a complex numbers as a floating point approximation
  plus an error bound, and automatically
  takes into account all precision loss occurring through the
  execution of the program. With this model we can certify
  the accuracy of the numerical results of our algorithm (up to human or even
  compiler errors, as usual).

  Another implementation has been done in Magma \cite{Magma}. Both are publicly available
  on github at \url{https://github.com/pascalmolin/hcperiods} \cite{githubhcperiods_2017_833727}.

  \subsection{Interface with the LMFDB}

  Having rigorous period matrices is a valuable input for the methods developed by
  Sijsling et al. \cite{CMSVEndos} to compute endormorphism rings of Jacobians of hyperelliptic
  curves.
  During a meeting aimed at expanding the `L-functions and modular forms database' \cite[LMFDB]{lmfdb}
  to include genus $3$ curves, the Magma implementation of our algorithm was incorporated in their framework
  to successfully compute the endomorphism rings of Jacobians of $67,879$ hyperelliptic
  curves of genus $3$, and confirm those of the $66,158$ genus
  2 curves that are currently in the database.

  For these applications big period matrices were computed to $300$ digits precision.

  \subsection{Structure of the paper}

  In Section \ref{m-sec:ajm} we briefly review the objects we are interested
  in, namely period matrices and the Abel-Jacobi map of nice algebraic curves.
  The ingredients to obtain these objects, a basis of holomorphic differentials
  and a homology basis, are made explicit in the case of superelliptic curves
  in Section \ref{m-sec:se_curves}.
  We give formulas for the computation of periods in Section
  \ref{m-sec:strat_pm} and explain how to obtain from them the standard period
  matrices using symplectic reduction.
  In Section \ref{sec:intersections} we give explicit formulas for the
  intersection numbers of our homology basis.
  For numerical integration we employ two different integration schemes that
  are explained in Section \ref{m-sec:numerical_integration}: the
  double-exponential integration and
  (in the case of hyperelliptic curves) Gauss-Chebychev integration.
  The actual computation of the Abel-Jacobi map is explained in detail in
  Section \ref{m-sec:comp_ajm}.
  In Section \ref{m-sec:comp_asp} we analyze the complexity of our algorithm
  and share some insights on the implementation.
  Section \ref{m-sec:examples_timings} contains some tables with
  running times to demonstrate the performance of the code.
  Finally, in Section \ref{m-sec:outlook} we conclude with an outlook on what can be done in the future.
  
  \subsection{Acknowledgements}

  The first author wants to thank the crypto team at Inria Nancy, where
  a first version of this work was carried out in the case of hyperelliptic
  curves. He also acknowledges the support from Partenariat Hubert Curien under
  grant 35487PL.

  The second author wants to thank Steffen Müller and Florian Hess for helpful discussions.
  Moreover, he acknowledges the support from DAAD under grant 57212102.

  \section{The Abel-Jacobi map}\label{sec:ajm}

  We recall, without proof, the main objects we are interested in, and which
  will become completely explicit in the case of superelliptic curves.
  The exposition follows that of \cite[Section 2]{vanWam1998}.

  \subsection{Definition}

  Let $\cu$ be a smooth irreducible projective curve of genus $g>0$. Its space
  of holomorphic differentials $\hd$ has dimension $g$; let us fix
  a basis $ω_1,\dots ω_g$ and denote by $\bar\w$ the vector
  $(ω_1,\dots ω_g)$.

  For any two points $P,Q \in \cu$ we can
  consider the vector integral $\int_{P}^Q\bar\w\in\C^g$, whose value
  depends on the chosen path from $P$ to $Q$.

  In fact, the integral depends on the path up to homology,
  so we introduce the {\em period lattice} of $\cu$
  \begin{equation*}
      \Lambda = \set{\int_γ ω_j, γ\in\homo} \subset\C^g,
  \end{equation*}
  where $\homo \isom\Z^{2g}$ is the first homology group
  of the curve.

  Now the integral
  \begin{equation*}
      P,Q \mapsto \int_{P}^Q \bar\w \in \C^g/\Lambda
  \end{equation*}
  is well defined, and the definition can be extended
  by linearity to the group of
  degree zero divisors
  \begin{equation*}
      \Div^0(\cu)=\set{ \sum a_i P_i, a_i\in\Z, \sum a_i = 0}.
  \end{equation*}

  The Abel-Jacobi theorem states that one obtains a
  surjective map 
  whose kernel
  is formed by divisors of functions, so that the integration
  provides an explicit isomorphism
  \begin{equation*}\label{eq:ajm_def}
      \AJ:\left\{\begin{array}{ccc}
              \Jac(\cu) = \Div^0(\cu)/\Prin^0(\cu) &\To &\C^g/\Lambda \\
              \sum_i [Q_i-P_i] &\mapsto & \sum_k \int_{P_i}^{Q_i} \bar\w \mod \Lambda
  \end{array}\right.
  \end{equation*}
  between the Jacobian variety and the complex torus. \\

  \subsection{Explicit basis and standard matrices}\label{subsec:bases_matrices}

  Let us choose a symplectic basis of $\homo$, that is two
  families of cycles $α_i$, $β_j$ for $1\leq i,j\leq g$ such that
  the intersections satisfy
  \begin{equation*}
      \left( \alpha_i \circ \beta_j \right) = \delta_{i,j},
  \end{equation*}
  the other intersections all being zero.

  We define the period matrices on those cycles
  \begin{equation*}
      \OA = \left(\int_{α_i}ω_j\right)_{1\leq i,j\leq g}
      \text{ and }
      \OB = \left(\int_{β_i}ω_j\right)_{1\leq i,j\leq g}
  \end{equation*}
  and call the concatenated matrix \\
  \begin{equation*}
      \Omega = (\OA, \OB) \in \C^{g\times 2g}
  \end{equation*}
  such that $\Lambda = \Omega\Z^{2g}$ a {\em big period matrix}. 

  If one takes as basis of differentials the dual basis of
  the cycles $α_i$, the matrix becomes
  \begin{equation*}
      \OA^{-1}\Omega = (I_g, \tau),
  \end{equation*}
  where $\tau = \OA^{-1}\OB \in \C^{g \times g}$, called  a {\em small period matrix}, is in the Siegel space
  $\mathcal{H}_g$ of symmetric matrices with positive definite imaginary part.

  \section{Superelliptic curves}\label{sec:se_curves}

  \subsection{Definition \& properties}\label{subsec:se_def}

    \begin{defn}\label{def:se_curve}
    In this paper, a superelliptic curve $\cu$ over $\C$ is a smooth projective curve that has an affine model given by an equation of the form
   \begin{equation}\label{eq:aff_model}
    \caff : \quad y^m = f(x) =  c_f \cdot \prod_{k=1}^n (x-x_k),
   \end{equation}
   \end{defn}
   where $m > 1$ and $f \in \C[x]$ is separable of degree $n \ge 3$. Note that we do not assume that $\gcd(m,n) = 1$.

  There are $\delta = \gcd(m,n)$ points $P_{\infty}^{(1)},\dots,P_{\infty}^{(\delta)} \in \cu$ at infinity, that behave differently depending on $m$ and $n$ (see \cite[\S 1]{CT1996} for details).
  In particular, $\infty \in \P^1_{\C}$ is a branch point for $\delta \ne m$. Thus, we introduce the set of finite branch points $X = \X$ as well as the set of all branch points
  \begin{align}\label{eq:branch_points}
         \hat{X} = \begin{cases}   X \cup \{ \infty \} \quad \text{if} \; m  \nmid  d,\\
         X \hfill \text{otherwise.}
     \end{cases}
  \end{align}
  The ramification indices at the branch points are given by $e_x = m$ for all $x \in X$ and $e_{\infty} = \frac{m}{\delta}$. Using the
  Riemann-Hurwitz formula, we obtain the genus of $\cu$ as
  \begin{equation}\label{eq:genus}
    g = \frac{1}{2}( (m-1)(n-1) - \delta + 1).
  \end{equation}
  We denote the corresponding finite ramification points $P_k = (x_k,0) \in \cu$ for $k = 1,\dots,n$.

  \begin{rmk}
   Without loss of generality we may assume $c_f = 1$ (if not, apply the transformation $(x,y) \mapsto (x,\sqrt[m]{c_f}y)$).
  \end{rmk}
  \begin{rmk}
      \label{rmk:moebius}
      For any $\begin{pmatrix}a&b\\c&d\end{pmatrix}\in\PSl(2,\C)$,
      the Moebius transform $\phi:u\mapsto \frac{au+b}{cu+d}$ is an automorphism
      of $\P^1$. By a change of coordinate $x=\phi(u)$ we obtain a different model of $\cu$
      given by the equation
      \begin{equation*}
          \tilde v^m = \tilde f(u)
      \end{equation*}
      where $\tilde f(u)=f(\phi(u))(cu+d)^{\ell m}$ and $v=y(cu+d)^\ell$ for
      the smallest value $\ell$ such that $\ell m\geq n$.

      If the curve was singular at infinity, the singularity is moved to $u=-d/c$ in the new model.
      This happens when $\delta < m$ (so that $\ell m > n$).

  When $\delta=m$ we may apply such a transformation to improve the configuration
  of affine branch points.
  \end{rmk}

  \subsection{Complex roots and branches of the curve}\label{subsec:roots_branches}

  \subsubsection{The complex $m$-th root}

  Working over the complex numbers we encounter several multi-valued functions
  which we will briefly discuss here.  Closely related to superelliptic
  curves over $\C$ is the complex $m$-th root.
  Before specifying a branch it is a multi-valued function $y^m = x$
  that defines an $m$-sheeted Riemann surface, whose only branch points
  are at $x = 0,\infty$, and these are totally ramified.

  For $x\in\C$, it is natural and computationally convenient to use the
  \emph{principal branch} of the $m$-th root $\sqrt[m]x$ defined by
  \begin{equation*}
      -\frac{π}m<\arg(\sqrt[m]x)\leq\frac{π}m
  \end{equation*}
  which has a branch cut along the negative real axis $]\!-\infty,0]$.
  Crossing it in positive orientation corresponds to multiplication by
  the primitive $m$-th root of unity
  \begin{equation*}
  \zeta := \zeta_m := e^{\frac{2\pi i }{m}}
  \end{equation*}
  on the surface. In
  particular, the monodromy at $x=0$ is cyclic of order $m$.

  \subsubsection{The Riemann surface}\label{subsec:riemann_surface}

  For an introduction to the theory of Riemann surfaces, algebraic curves and holomorphic covering maps we recommend \cite{Miranda1995}.

   Over $\C$ we can identify the curve $\cu$ with the compact Riemann surface $\cu(\C)$. Since our defining equation
   has the nice form
   $y^m = \prod_{k = 1}^n (x - x_k)$ we view $\cu$ as a Riemann surface with $m$ sheets and
   all computations will be done in the $x$-plane.

   We denote by $\pr : \cu \rightarrow \P^1_{\C}$ the corresponding smooth cyclic branched covering of the projective line
   defined by the $x$-coordinate.

  There
  are $m$ possibilities to continue $y$ as an analytic function following a path in the $x$-plane. This is crucial for the integration
  of differentials on $\cu$. Due to the cyclic structure of $\cu$, they are related in a convenient way:

  We call a \emph{branch of $\cu$} a function $y(x)$ such that
  $y(x)^m = f(x)$ for all $x \in \C$. At every $x$, the branches of $\cu$ only
  differ by a factor $\zeta^l$ for some $l \in \{0,\dots,m-1\}$. Thus, following a path, it is sufficient to know \emph{one} branch that is analytic in a suitable neighborhood. In the
  next paragraph, we will introduce locally analytic branches very explicitly.

  \bigskip

  Similar to the complex $m$-th root, we can assume that crossing the branch cut at $x_k \in X$ in positive direction corresponds to multiplication by $\zeta$ on the Riemann surface. We
  obtain an ordering of the sheets relative to the analytic branches of $\cu$ by imposing that multiplication by $\zeta$, i.e. applying the map
  $(x,y(x)) \mapsto (x,\zeta y(x))$, corresponds to moving one sheet up on the Riemann surface.

   \bigskip

  Consequently, the local monodromy of the cyclic covering $\pr$  is equal and cyclic of order $m$ at every $x_k \in X$
 and the monodromy group is, up to conjugation, the cyclic group $C_m$. This makes it possible to find explicit generators for the
  homology group $\homo$ without specifying a base point, as shown in \S \ref{m-subsec:cycles_homo}.

  \subsubsection{Locally analytic branches}\label{subsubsec:analytic_branches}

  In order to integrate differential forms on $\cu$
  it is sufficient to be able to follow \emph{one} explicit analytic continuation of $y$ along a
  path joining two branch points $a, b \in X$.

  One could of course consider the \emph{principal branch} of the curve
  \begin{equation*}
      y(x) = \sqrt[m]{f(x)},
  \end{equation*}
  but this is not a good model to compute with: it has branch cuts
  wandering around the $x$-plane (see Figure \ref{fig:mth_root_principal}).

  A better option is to split the product as follows:
  assume that $(a,b) = (-1,1)$. Then the function
  \begin{equation*}
      y(x) = \prod_{x_k\in X}\sqrt[m]{x-x_k}
  \end{equation*}
  has $n$ branch cuts parallel to the real line (see Figure \ref{fig:mth_root_product}).
  However, one of them lies exactly on the interval $[-1,1]$ we are interested in. We work around this
  by taking the branch cut towards $+\infty$ for each branch point $x_k$ with positive real part, writing
  \begin{equation*}
      y(x) = e^{\frac{iπr^+}m}\prod_{\Re(x_k)\leq0}\sqrt[m]{x-x_k} \prod_{\Re(x_k)>0}\sqrt[m]{x_k-x},
  \end{equation*}
  where $r^+$ is the number of points with positive real part.

  \begin{figure}[H]
      \begin{center}
          \subfloat[principal branch]{
          \includegraphics[width=.3\linewidth,page=1]{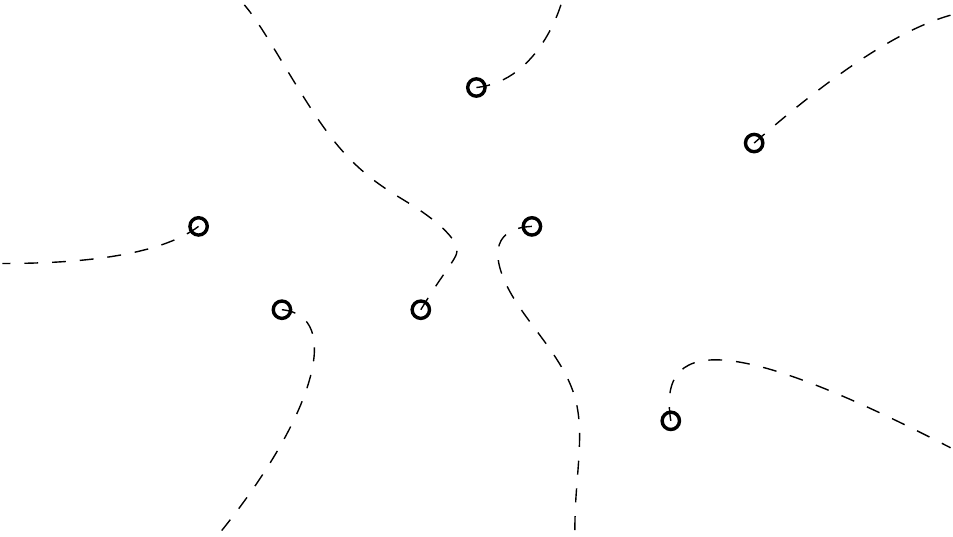}
          \label{fig:mth_root_principal} }
          \subfloat[product]{ \includegraphics[width=.3\linewidth,page=2]{branch_cuts.pdf}\label{fig:mth_root_product} }
          \subfloat[$\yab$]{ \includegraphics[width=.3\linewidth,page=3]{branch_cuts.pdf}\label{fig:mth_root_signed} }
      \end{center}
      \caption{Branch cuts of different $m$-th roots.}
  \label{fig:mth_root_pol} \end{figure}

  In general we proceed in the same way: For branch points $a,b \in X$ we consider the affine linear
  transformation
  \begin{equation*}
      \label{def:xab}
      \xab : u \mapsto \frac{b-a}{2}\left(u+\frac{b+a}{b-a}\right),
  \end{equation*}
  which maps $[-1,1]$ to the complex line segment $[a,b]$, and denote the inverse map by
  \begin{equation*}
      \label{def:uab}
      \uab : x \mapsto \frac{2x-a-b}{b-a}.
  \end{equation*}

   We split the image of the branch points under $\uab$ into the following subsets
  \begin{equation}\label{eq:uab_image}
      \set{ \uab(x), x\in X} = \set{-1,1} \cup U^+ \cup U^-,
  \end{equation}
  where points in $U^+$ (resp. $U^-$) have strictly positive (resp. non-positive) real part.

  Then the product
  \begin{equation}
      \label{eq:ytab}
      \ytab(u) = \prod_{u_k\in U^-} \sqrt[m]{u-u_k}\prod_{u_k\in U^+}\sqrt[m]{u_k-u}
  \end{equation}
  is holomorphic on a neighborhood $ε_{a,b}$ of $[-1,1]$ which we can take as
  an ellipse \footnote{we will exhibit such a neighborhood in Section \ref{m-subsec:gauss_chebychev_integration}}
  containing no point $u_k\in U^-\cup U^+$, while the term corresponding to $a,b$
  \begin{equation*}
      \sqrt[m]{1-u^2}
  \end{equation*}
  has two branch cuts $]-\infty,-1]$ and $[1,\infty[$, and is holomorphic on the complement
  $\overline U$ of these cuts.

  We can now define a branch of the curve
  \begin{equation}
      \label{eq:def_yab}
      \yab(x) =   C_{a,b} \yt_{a,b}( \uab(x) ) \sqrt[m]{1 - \uab(x)^2}
  \end{equation}
  by setting $r = 1+\#U^+ \bmod 2$ and choosing the constant
  \begin{equation}
      C_{a,b} = \left(\frac{b-a}{2}\right)^{\frac{n}{m}} e^{\frac{\pi i}{m}r}
  \end{equation}
  such that $\yab(x)^m = f(x)$.

  The function $\yab(x)$ has $n$ branch cuts all parallel to $[a,b]$ in outward direction and
  is holomorphic inside $]a,b[$ (see Figure \ref{fig:mth_root_signed}).

  More precisely, $\vab = \uab(ε_{a,b}\cap \overline U_{a,b})$,
  is an ellipse-shaped neighborhood of $]a,b[$ with two segments removed
  (see Figure \ref{m-fig:set_vab})
  on which the local branch $\yab$ is well defined and holomorphic.

  \begin{figure}[H] \begin{center}
\begin{tikzpicture}[scale=0.8, every node/.style={scale=0.8}]
  \draw (-1.5,0) node {$a$};  \draw (1.5,0) node {$b$}; \draw (0,0) node {$V_{a,b}$};
  \draw (-1.8,0) circle (1.3pt); \draw (1.8,0) circle (1.3pt);
    \draw [densely dashed] (-3,0) -- (-1.8,0);   \draw [densely dashed] (3,0) -- (1.8,0);
    \draw (-2.4,0.1) .. controls (-2,1.5) and (2,1.5) .. (2.4,0.1);
    \draw (-2.4,-0.1) .. controls (-2,-1.5) and (2,-1.5) .. (2.4,-0.1);
    \draw (-2.4,-0.1) -- (-1.7,-0.1); \draw (2.4,-0.1) -- (1.7,-0.1);
    \draw (-2.4,0.1) -- (-1.7,0.1); \draw (2.4,0.1) -- (1.7,0.1);
    \draw (-1.7,0.1) -- (-1.7,-0.1); \draw (1.7,0.1) -- (1.7,-0.1);
    \draw (-1,1.4) circle (1.3pt);
    \draw [densely dashed] (-1.1,1.4) -- (-3,1.4);
    \draw (-0.3,-1.3) circle (1.3pt);
    \draw [densely dashed] (-0.3,-1.3) -- (-3,-1.3);
    \draw (1.35,1) circle (1.3pt);
    \draw [densely dashed] (1.35,1) -- (3,1);
    \draw (0.6,-1.2) circle (1.3pt);
    \draw [densely dashed] (0.6,-1.2) -- (3,-1.2);
     \draw (0.1,1.5) circle (1.3pt);
    \draw [densely dashed] (0.1,1.5) -- (3,1.5);
\end{tikzpicture}
  \end{center} \caption{Holomorphic neighborhood of $\yab$.}
  \label{fig:set_vab} \end{figure}
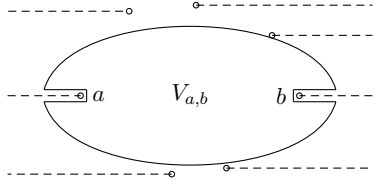

We sum up the properties of these local branches:

 \begin{prop}\label{prop:yab}
      Let $a,b\in X$ be branch points such that $X\cap\,]a,b[\,=\varnothing$.
      Then, with the notation as above, the functions
      $\ytab$ \eqref{eq:ytab} and $\yab$ \eqref{eq:def_yab}
      satisfy
     \begin{itemize}
         \item $\ytab$ is holomorphic and does not vanish on $ε_{a,b}$,
         \item $\yab(x) = C_{a,b} \ytab(\uab(x)) \sqrt[m]{1-\uab(x)^2}$ is holomorphic
         on $\vab$,
         \item $\yab(x)^m = f(x)$ for all $x\in\C$,
         \item $\yab(x),\zeta\yab(x),\dots,\zeta^{m-1}\yab(x)$ are the $m$ different analytic continuations of $y$ on $\vab$.
     \end{itemize}
     Moreover, we can assume that for $x \in \vab$, applying the map $(x,\yab(x)) \mapsto (x,\zeta^l\yab(x))$ corresponds to moving up $l \in \Z/m\Z$ sheets on the Riemann surface.
 \end{prop}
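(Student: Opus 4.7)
My approach is to verify the four bullets in order, with the bulk of the work being a careful analysis of the branch cuts of the principal $m$-th root and a short sign-tracking computation.

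For the first bullet, I would examine each factor of $\ytab$ separately. The factor $\sqrt[m]{u-u_k}$ with $u_k\in U^-$ has its branch cut on the ray $u_k+(-\infty,0]$, which starts in the closed left half-plane and extends leftward; symmetrically $\sqrt[m]{u_k-u}$ with $u_k\in U^+$ has its branch cut on $u_k+[0,+\infty)$, starting in the open right half-plane and extending rightward. For a sufficiently thin ellipse $ε_{a,b}$ around $[-1,1]$, neither family of rays meets $ε_{a,b}$, and the ellipse contains no $u_k$ by assumption. Hence each factor is holomorphic and non-vanishing on $ε_{a,b}$, and so is their product.

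For the second bullet, the remaining factor $\sqrt[m]{1-u^2}$ has branch cuts exactly on $(-\infty,-1]\cup[1,+\infty)$, so it is holomorphic on $\overline U$. Composing with the affine map $\uab$ and multiplying by the constant $C_{a,b}$ gives a function holomorphic on $\vab=\uab(ε_{a,b}\cap \overline U)$. For the third bullet I would raise the definition of $\yab$ to the $m$-th power and use $\sqrt[m]{z}^m=z$ on each factor. The sign contributions are $(-1)^{\#U^+}$ from the second product and $-1$ from $1-u^2=-(u-1)(u+1)$, giving a global sign $(-1)^{\#U^+ +1}$. Pulling back the product $\prod_k(u-u_k)$ via $\uab$ introduces a factor $(2/(b-a))^n$. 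The constant $C_{a,b}$ is designed precisely to cancel these: the factor $((b-a)/2)^{n/m}$ supplies the missing power, and the exponent $r\equiv \#U^+ +1\pmod 2$ in $e^{i\pi r/m}$ gives $e^{i\pi r}=(-1)^{\#U^+ +1}$, so $\yab(x)^m=f(x)$ as required.

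For the fourth bullet, since $\zeta^{lm}=1$ the functions $\zeta^l\yab$ for $l=0,\dots,m-1$ all satisfy $y^m=f$ and take pairwise distinct values at any point where $f$ does not vanish; as $\vab$ is connected and $y\mapsto y^m$ is an unramified $m$-to-$1$ cover away from the branch locus, uniqueness of analytic continuation shows these are exactly the $m$ holomorphic branches of $y$ over $\vab$. The final statement on sheets is then a matter of labeling: the convention fixed in \S\ref{subsec:riemann_surface} that multiplication by $\zeta$ moves one sheet up is transported to each local branch $\yab$ by analytic continuation. The only delicate step is the sign bookkeeping in the third bullet, and the definition of $r=1+\#U^+\bmod 2$ is exactly what makes the formula balance.
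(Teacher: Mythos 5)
Your proof is correct and takes essentially the same approach as the paper's, which in fact states the proposition without a separate proof, relying on the branch-cut analysis and the definition of $C_{a,b}$ in the preceding paragraphs as its justification. Your explicit verification --- the leftward/rightward rays avoiding a thin ellipse, and the sign $(-1)^{\#U^+ + 1}$ cancelling against $\left(e^{\pi i r/m}\right)^m$ via the choice $r \equiv 1 + \#U^+ \bmod 2$ --- simply makes precise what the paper treats as self-evident from its construction.
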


 \subsection{Cycles and homology}\label{subsec:cycles_homo}

   For us, a \emph{cycle} on $\cu$ is a smooth oriented closed path in $\pi_1(\cu)$.
   For simplicity we identify all cycles with their homology classes in $\homo = \pi_1(\cu)/[\pi_1(\cu),\pi_1(\cu)]$.

   In the following we present an
   explicit generating set of $\homo$ that relies on the locally analytic branches $\yab$ as defined in \eqref{m-eq:def_yab} and the superelliptic structure of $\cu$.

   Let $a, b \in X$ be branch points such that $X\cap]a,b[=\varnothing$, where  $[a,b]$ is the oriented line segment connecting $a$ and $b$.

   By Proposition \ref{m-prop:yab} the lifts of $[a,b]$ to $\cu$ are given by
   \begin{equation*}\label{eq:def_path_ab}
      \gamma^{(l)}_{[a,b]} = \{  (x,\zeta^l \yab(x))  \mid  x \in [a,b]  \}, \quad l \in \Z/m\Z.
   \end{equation*}
   These are smooth oriented paths that connect $P_a = (a,0)$ and $P_b = (b,0)$ on $\cu$. We obtain cycles by concatenating these lifts in the following way:
    \begin{equation}\label{eq:def_cyabl}
      \cyabl = \gamma^{(l)}_{[a,b]} \cup \gamma^{(l+1)}_{[b,a]} \in \pi_1(\cu).
   \end{equation}
   \begin{defn}[Elementary cycles]\label{def:elem_cycles}
       We say $\cyab = \cyab^{(0)}$ is an \emph{elementary cycle} and call $\cyabl$ its \emph{shifts} for $l \in \Z/m\Z$.
   \end{defn}
   In $\pi_1(\cu)$
   shifts of elementary cycles are homotopic to cycles that encircle  $a$ in negative and $b$ in positive orientation, once each.
   By definition of $\yab$ the branch cuts at the end points are outward and parallel to $[a,b]$. Thus, we have the following useful visualizations of $\cyabl$ on $\cu$:
   \begin{figure}[H]
      \begin{center}
\begin{tikzpicture}
     \draw (-1.8,2) node {$a$};  \draw (1.8,2) node {$b$};
     \draw [densely dashed] (-3.0,3) -- (-1.8,3);  \draw [densely dashed] (1.8,3) -- (3.0,3); \draw (-1.8,3) circle (1.3pt); \draw (1.8,3) circle (1.3pt);
      \draw (-1.8,3) -- (1.8,3) [halfarrow2];
   \draw [densely dashed] (-3.0,1) -- (-1.8,1);  \draw [densely dashed] (1.8,1) -- (3.0,1); \draw (-1.8,1) circle (1.3pt); \draw (1.8,1) circle (1.3pt);
\draw (-1.8,1) -- (1.8,1) [halfarrow1];

      \draw (3.5,2) node {$\sim$};
      
           \draw (5.2,2) node {$a$};  \draw (8.8,2) node {$b$};
     \draw [densely dashed] (4.0,3) -- (5.2,3);  \draw [densely dashed] (8.8,3) -- (10,3); \draw (5.2,3) circle (1.3pt); \draw (8.8,3) circle (1.3pt);
     
\draw (4.5,3) .. controls (4.5,2.4) and (6,2.6) .. (7,3) [halfarrow2];
\draw (7,3) .. controls (8,3.4) and (9.5,3.6) .. (9.5,3) [halfarrow2];

   \draw [densely dashed] (4,1) -- (5.2,1);  \draw [densely dashed] (8.8,1) -- (10,1); \draw (5.2,1) circle (1.3pt); \draw (8.8,1) circle (1.3pt);
\draw (4.5,1) .. controls (4.5,1.6) and (6,1.4) .. (7,1) [halfarrow1];
\draw (7,1) .. controls (8,0.6) and (9.5,0.4) .. (9.5,1) [halfarrow1];
\end{tikzpicture}
      \end{center}
    \caption{Representations of a cycle $\cyabl$.}
    \label{fig:elem_cycle}
\end{figure}

  \bigskip

  As it turns out, we do not need all elementary cycles and their shifts to
  generate $\homo$, but only those that correspond to edges in a \emph{spanning tree},
  that is a subset $E\in X\times X$ of directed edges $(a,b)$ such that all branch points
  are connected without producing any cycle. It must contain exactly $n-1$ edges.
  The actual tree will be chosen in \S \ref{subsec:spanning_tree} in order to minimize
  the complexity of numerical integration.

    \medskip
  For an edge $e = (a,b) \in E$, we denote by $\gamma_e^{(l)}$ the shifts of
  the corresponding elementary cycle $\cyab$.

  \begin{thm}\label{thm:gen_set}
      Let $E$ be a spanning tree for the branch points $X$.
   The set of cycles $\Gamma = \left\{  \gamma_{e}^{(l)}  \mid  0 \le l <m-1,  e \in E  \right\}$ generates $\homo$.
  \end{thm}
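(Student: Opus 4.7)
The plan is to realize $\cu$ as a CW complex whose $1$-skeleton is the preimage graph $G_0 := \pr^{-1}(T)$ of the tree $T := \bigcup_{(a,b) \in E} [a,b]$, and to prove separately that (a) the cycles in $\Gamma$ form a $\Z$-basis of $H_1(G_0,\Z)$, and (b) the inclusion $G_0 \hookrightarrow \cu$ induces a surjection $H_1(G_0,\Z) \twoheadrightarrow \homo$.

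For step (a), I first observe that total ramification at each finite branch point $x_k \in X$ gives a unique preimage $P_k \in \cu$, while each edge $(a,b)\in E$ lifts to $m$ disjoint arcs in $\cu$ (its interior avoids the branch locus), so $G_0$ has $n$ vertices and $m(n-1)$ edges. The graph $G_0$ is connected, because the lift of any path in $T$ starting at $P_k$ terminates at the unique preimage $P_{k'}$ of its endpoint. Hence
\begin{equation*}
\rk H_1(G_0,\Z) = m(n-1) - n + 1 = (m-1)(n-1).
\end{equation*}
Writing $e^{(l)}$ for the lift of the oriented edge $e = (a,b)$ on the $l$-th sheet, formula \eqref{eq:def_cyabl} identifies $\gamma_e^{(l)}$ with $e^{(l)} - e^{(l+1)}$ in $C_1(G_0)$. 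The only relation among the $m$ shifts of a fixed edge is $\sum_{l \in \Z/m} \gamma_e^{(l)} = 0$, and cycles attached to distinct edges involve disjoint generators of $C_1(G_0)$; hence the $(m-1)(n-1)$ elements of $\Gamma$ are linearly independent in $C_1(G_0)$, therefore in $H_1(G_0,\Z)$, and by the rank count they form a $\Z$-basis of $H_1(G_0,\Z)$.

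For step (b), I will extend $G_0$ to a CW structure on $\cu$ by attaching one $2$-cell for each connected component of $\cu \setminus G_0 = \pr^{-1}(D)$, where $D := \P^1 \setminus T$. Since $T$ is an embedded tree, $D$ is its unique face, homeomorphic to an open disk and containing $\infty$. When $\delta = m$ the cover is unramified over $D$, so $\pr^{-1}(D)$ is a disjoint union of $m = \delta$ copies of $D$. When $\delta < m$ the cover is branched over $D$ only at $\infty$ with ramification $m/\delta$; the restriction $\pr^{-1}(D\setminus\{\infty\})\to D\setminus\{\infty\}$ is then an unramified cyclic $m$-cover of an annulus whose monodromy has order $m/\delta$, so it has $\delta$ annular components, and adjoining the $\delta$ preimages of $\infty$ turns each component into an open disk. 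In either case $\cu\setminus G_0$ consists of $\delta$ open $2$-cells, so $\homo = H_1(G_0,\Z)/\operatorname{im}(\partial_2)$ is a quotient of $H_1(G_0,\Z)$. Combined with (a), this proves that $\Gamma$ generates $\homo$.

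The most delicate part is the topological analysis of $\pr^{-1}(D)$ when $\delta < m$: one must check that each connected component is a disk rather than a surface of higher genus. The cyclic-cover description above handles this, and can be cross-checked via Riemann--Hurwitz, which gives $\chi(\pr^{-1}(D)) = m\cdot\chi(D) - (m-\delta) = \delta$, consistent with $\delta$ disk components each contributing $\chi = 1$.
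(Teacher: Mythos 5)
Your argument is correct in outline and takes a genuinely different route from the paper's. The paper works inside $\pi_1(\P^1\setminus\hat X)$: it identifies $\pi_1(\cu\setminus\pr^{-1}(\hat X))$ as the kernel of the monodromy representation $\Phi:\pi_1(\P^1\setminus\hat X)\to C_m$, exhibits explicit word generators $\alpha_a^{-s}\alpha_b^s$ and $\alpha_a^m$, quotients out the ramification relations to get $\pi_1(\cu)$, and then relates these words to the $\gamma_e^{(l)}$ via a path in the spanning tree. You instead equip $\cu$ with a CW structure whose $1$-skeleton is $G_0=\pr^{-1}(T)$, compute $H_1(G_0)$, and observe that attaching $2$-cells can only kill classes. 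Your route is more geometric and in particular re-derives the remark following the theorem for free: when $\delta=1$ the surjection $H_1(G_0)\twoheadrightarrow\homo$ is between free groups of equal rank $2g$, hence an isomorphism, so $\Gamma$ is a basis. Your computation of $\pr^{-1}(D)$ (and the Riemann--Hurwitz cross-check $\chi(\pr^{-1}(D))=\delta$) is correct, and the identification $\gamma_e^{(l)}=e^{(l)}-e^{(l+1)}$ in $C_1(G_0)$ matches equation~\eqref{eq:def_cyabl}.

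There is, however, one genuine gap in step (a). You conclude that the $(m-1)(n-1)$ cycles form a $\Z$-basis of $H_1(G_0,\Z)$ from linear independence plus a rank count. Over $\Z$ this inference is invalid: a linearly independent set of full rank may generate only a finite-index sublattice (think of $\{2\}\subset\Z$). Since the theorem asks for \emph{generation} of $\homo$, you must actually show that $\Gamma$ generates $H_1(G_0,\Z)$, not merely that it is independent and has the right cardinality. The claim is true and easy to repair: take $T'=\{e^{(0)}\mid e\in E\}$ as a spanning tree of $G_0$; the fundamental cycles attached to the non-tree edges $e^{(l)}$, $1\le l\le m-1$, are $e^{(l)}-e^{(0)}$, and for each fixed $e$ the change of variables between $(e^{(l)}-e^{(0)})_{1\le l\le m-1}$ and $(\gamma_e^{(l)})_{0\le l\le m-2}$ is unimodular (lower triangular with $\pm1$ on the diagonal). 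Hence $\Gamma$ is indeed a $\Z$-basis of $H_1(G_0,\Z)$, and the rest of your argument goes through unchanged.
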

  \begin{proof}
  Denote by $\alpha_a \in \pi_1(\P^1 \setminus \hat{X})$ a closed path that encircles the branch point $a \in \hat{X}$ exactly once. Then,  due to the relation $1 = \prod_{a \in \hat{X}} \alpha_a$,
  $\pi_1(\P^1 \setminus \hat{X})$ is freely generated by $\{ \alpha_a \}_{a \in X}$, i.e. in the case $\delta \ne m$ we can omit $\alpha_{\infty}$. \abstand
  Since our covering is cyclic, we have that $
  \pi_1(\cu \setminus \pr^{-1}(\hat{X})) \isom \ker(\pi_1(\P^1 \setminus \hat{X}) \overset{\Phi}{\To} \Aut(\cu \setminus \pr^{-1}(\hat{X})))$ where $\Aut(\cu \setminus \pr^{-1}(\hat{X})) \isom C_m
  \subset S_m$
  and $\Phi(\alpha_a)$ is cyclic of order $m$ for all $a \in X$. Hence, for every word $\alpha = \alpha_1^{s_1}\dots \alpha_n^{s_n} \in \pi_1(\P^1 \setminus \hat{X})$ we have that
  $\alpha \in \ker(\Phi) \eq \sum_{i=1}^n s_i \equiv 0 \bmod m$. \abstand
  We now claim that $\pi_1(\cu \setminus \pr^{-1}(\hat{X})) = \langle  \alpha_a^{-s} \alpha_b^{s},  \alpha_a^m   \mid  s \in \Z, a,b \in X  \rangle$
  and prove this by induction on $n$: for $\alpha = \alpha_1^{s_1}$, $m$ divides $s_1$ and therefore $\alpha$ is generated by $\alpha_1^m$. For $n > 1$ we write
  $\alpha = \alpha_1^{s_1}\dots \alpha_n^{s_n} = (\alpha_1^{s_1} \dots \alpha_{n-1}^{s_{n-1}+s_n})(\alpha_{n-1}^{-s_n}\alpha_n^{s_n})$. \abstand
  We obtain the fundamental group of $\cu$ as
  $\pi_1(\cu) \isom \pi_1(\cu \setminus \pr^{-1}(\hat{X})) / \langle  \alpha_a^{e_a}  \mid  a \in \hat{X}  \rangle$, which is generated by
  $\{  \alpha_a^{-s} \alpha_b^{s}  \mid  s \in \Z/m\Z,  a,b \in X  \}$. 
  \abstand
  All branch points $a,b \in X$ are connected by a path $(a,v_1,\dots,v_t,b)$ in the spanning tree, so we can write $\alpha_a^{-s} \alpha_b^{s} = (\alpha_a^{-s}\alpha_{v_1}^{s})
  (\alpha_{v_1}^{-s}\alpha_{v_2}^{s})\dots(\alpha_{v_{t-1}}^{-s}\alpha_{v_t}^{s})(\alpha_{v_t}^{-s}\alpha_b^{s})$ and hence we have that
  $\{ \alpha_a^{-s} \alpha_b^{s}  \mid  s \in \Z/m\Z,  (a,b) \in E \}$ generates $\pi_1(\cu)$ and therefore $\homo$. \abstand
  If we choose basepoints $p_0 \in \P^1 \setminus \hat{X}$ for $\pi_1(\P^1 \setminus \hat{X})$ and $P_0 \in \pr^{-1}(p_0)$ for $\pi_1(\cu \setminus \pr^{-1}(\hat{X}))$ and $\pi_1(\cu)$ respectively, then,
  depending on the choice of $P_0$, for all $e = (a,b) \in E$ there exists $l_0 \in \Z/m\Z$ such that $\gamma_e^{(l_0)}$ is homotopic to $\alpha_a^{-1} \alpha_b$ in $\pi_1(\cu,P_0)$.
   In $\homo$ we have that $\alpha_a^{-s}\alpha_b^{s} = ( \alpha_a^{-1}\alpha_b)^s$, so we obtain the other powers by concatenating
  the shifts $\prod_{l = 0}^{s-1} \gamma_e^{(l_0+l)} = (\alpha_a^{-1}\alpha_b)^s$.
  This implies $1 = \prod_{l = 0}^{m-1} \gamma_e^{(l_0+l)} = \prod_{l = 0}^{m-1} \gamma_e^{(l)}$ and
   $$\{  \alpha_a^{-s} \alpha_b^{s}  \mid  s \in \Z/m\Z  \} \subset  \langle  \gamma_e^{(l)}
  \mid  0 \le l < m-1 \rangle,$$ and therefore $\homo = \langle  \Gamma  \rangle$.
  \end{proof}

  \begin{rmk}
  \begin{itemize}
   \item[$\bullet$] For $\delta = 1$, we have that $\# \Gamma = (m-1)(n-1) = 2g$. Therefore, $\Gamma$ is a basis for $\homo$ in that case.
   \item[$\bullet$] In the case $\delta = m$, the point at infinity is not a branch point. Leaving out one finite branch point in the spanning tree results in only $n-2$ edges. Hence, we easily find a subset
   $\Gamma' \subset \Gamma$ such that
   $\# \Gamma' = (m-1)(n-2) = 2g$ and $\Gamma'$ is a basis for $\homo$.
  \end{itemize}
  \end{rmk}

\subsection{Differential forms}\label{subsec:diff_forms}

    The computation of the period matrix and the Abel-Jacobi map requires a basis of $\hd$ as a $\C$-vector space. In this section we provide a basis that only
   depends on $m$ and $n$ and is suitable for numerical integration. \abstand
    Among the meromorphic differentials
    \begin{align*}
 \WM = \left\{  \omega_{i,j}   \right\}_{\substack{1 \le i \le n-1, \\ 1 \le j \le m-1}} \quad \text{with} \quad \omega_{i,j} = \frac{ \dx^i}{i y^j},
  \end{align*}
  there are exactly $g$ that are holomorphic  and they can be found by imposing a simple combinatorial condition on $i$ and $j$.
 The following proposition is basically a more general version of
  \cite[Proposition 2]{CT1996}.

   \bigskip

     \begin{prop}\label{prop:holom_diff}
 Let $\delta = \gcd(m,n)$. The following differentials form  a $\C$-basis of $\hd$:
 \begin{equation*}\label{eq:holm_diff}
   \W =  \left\{  \omega_{i,j} \in \WM  \mid  -mi + jn - \delta \ge 0  \right\}
 \end{equation*}
     \end{prop}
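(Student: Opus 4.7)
The plan is to verify pointwise that every $\omega_{i,j}\in\W$ is holomorphic, observe that the family is manifestly linearly independent over $\C$, and conclude by a cardinality count showing $\#\W=g$.

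Writing $\omega_{i,j}=x^{i-1}\,\d x/y^j$, I first note that such a form is regular away from the zero locus of $y$ and from the points at infinity, so only these locations need to be inspected. At a finite ramification point $P_k=(x_k,0)$, the function $y$ is a uniformizer, so $\ord_{P_k}(x-x_k)=m$ and $\ord_{P_k}(\d x)=m-1$; combined with $\ord_{P_k}(x^{i-1})\ge 0$ this yields
$$\ord_{P_k}(\omega_{i,j})\ge (m-1)-j\ge 0,$$
using $j\le m-1$. At a point $P_\infty^{(s)}$, whose ramification index is $\mu=m/\delta$, one gets $\ord_{P_\infty^{(s)}}(x)=-\mu$, hence $\ord_{P_\infty^{(s)}}(\d x)=-\mu-1$; and comparing valuations in $y^m=f(x)$ gives $\ord_{P_\infty^{(s)}}(y)=-n/\delta=-\nu$ where $\nu=n/\delta$. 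Therefore $\ord_{P_\infty^{(s)}}(\omega_{i,j})=-\mu i-1+\nu j$, which is nonnegative exactly when $\nu j-\mu i\ge 1$; multiplying by $\delta$ yields the defining inequality $-mi+nj-\delta\ge 0$ of $\W$. Linear independence is immediate, since the forms correspond to distinct monomials $x^{i-1}y^{-j}$ of $\C(\cu)$.

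It remains to show $\#\W=g$. I would rewrite the count as
$$\#\W=\sum_{j=1}^{m-1}\left\lfloor\frac{nj-\delta}{m}\right\rfloor=\sum_{j=1}^{m-1}\left(\left\lceil\frac{\nu j}{\mu}\right\rceil-1\right),$$
using the identity $\lfloor(a-1)/b\rfloor=\lceil a/b\rceil-1$, and then invoke the classical formula $\sum_{j=0}^{m-1}\lfloor nj/m\rfloor=\tfrac{(m-1)(n-1)}{2}+\tfrac{\delta-1}{2}$ valid for arbitrary $\gcd(m,n)$. Since $\gcd(\mu,\nu)=1$, the divisibility $\mu\mid\nu j$ reduces to $\mu\mid j$, and exactly $\delta-1$ indices $j\in[1,m-1]$ satisfy this. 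A routine rearrangement then produces $\#\W=\tfrac{(m-1)(n-1)-\delta+1}{2}=g$. The order-of-vanishing computations at the two families of ramification points are essentially routine once uniformizers are fixed; the main technical hurdle is this last combinatorial identity, whose novelty compared to \cite{CT1996} lies in handling the case $\delta=\gcd(m,n)>1$ where the counting breaks out of the simplest coprime pattern.
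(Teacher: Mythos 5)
Your proposal is correct and follows the same overall architecture as the paper's proof: compute the orders of $\omega_{i,j}$ at the finite ramification points and at infinity using the covering data, note that linear independence is automatic because the $\omega_{i,j}$ correspond to distinct monomials in $x,y$, and finish by counting lattice points in the trapezoid to obtain $\#\W = g$. Your valuations $\ord_{P_k}(\omega_{i,j}) = m-1-j$ and $\ord_{P_\infty}(\omega_{i,j}) = -\mu i - 1 + \nu j$ agree with those the paper extracts from the explicit divisors $\div(x)$, $\div(y)$, $\div(\d x)$. The only genuine divergence is in how the count is finished: the paper writes $\lfloor(nj-\delta)/m\rfloor = (nj - \delta - r_j)/m$ with $r_j = nj - \delta \bmod m$ and proves a self-contained lemma for $\sum_{j=1}^{m-1} r_j$ by exploiting the periodicity $r_{j+m/\delta} = r_j$, whereas you convert the floor to a ceiling via $\lfloor(a-1)/b\rfloor = \lceil a/b\rceil - 1$, invoke the closed form $\sum_{j=0}^{m-1}\lfloor nj/m\rfloor = \tfrac{(m-1)(n-1)+\delta-1}{2}$ for $\gcd(m,n)=\delta$, and count the $\delta - 1$ indices $j\in\{1,\dots,m-1\}$ with $\mu \mid j$ to pass back from ceilings to floors. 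The two routes encode the same arithmetic fact; the paper's choice has the virtue of being entirely self-contained, yours is shorter but leans on the floor-sum formula as a black box. If you intend the argument to stand alone, you should supply a proof of that formula, which is essentially of the same difficulty as the paper's lemma on $\sum r_j$.
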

     \begin{proof}
      First we show that the differentials in $\W$ are holomorphic.
      Let $\w_{i,j} = x^{i-1}y^{-j} \dx \in \WM$. We write down the relevant divisors
      \begin{align*}
       \div(x) & = \sum_{k=1}^m \left(0,\zeta^k\sqrt[m]{f(0)}\right) - \frac{m}{\delta} \cdot \sum_{l = 1}^{\delta} P_{\infty}^{(l)}, \\
       \div(y) & = \sum_{k = 1}^n P_k - \frac{n}{\delta} \cdot \sum_{l = 1}^{\delta}  P_{\infty}^{(l)}, \\
       \div(\dx) & = (m-1)\sum_{k = 1}^n P_k - \left(\frac{m}{\delta} + 1\right)\cdot \sum_{l = 1}^{\delta}  P_{\infty}^{(l)}.
      \end{align*}
     Putting together the information, for $P \in \cu$ lying over $x_0 \in \P^1_{\C}$, we obtain
     \begin{align}\label{eq:diff_cases}
      v_P(\omega_{i,j}) & = (i-1) v_P(x) + v_P(\dx)  - j v_P(y)
 \begin{cases}
  \ge 0 \hfill \text{if} \; x_0 \ne x_k,\infty, \\
  = m-1-j \ge 0 \quad \text{if} \; x_0 = x_k, \\
  = \frac{(-mi-\delta+jn)}{\delta} \hfill \text{if}\;  x_0 = \infty.
 \end{cases}
     \end{align}
     We conclude: $\omega_{i,j} \in \WM$ is holomorphic if and only if $\omega_{i,j} \in \W$. \abstand
     Since the differentials in $\W$ are clearly $\C$-linearly independent, it remains to show that
     there are enough of them, i.e. $\#\W = g$.
     \medskip

     Counting the elements in $\W$ corresponds to counting lattice points
     $(i,j) \in \Z^2$ in the trapezoid given by the faces
     \begin{align*}
 1 \le i \le n-1,\\
 1 \le j \le m-1, \\
 i \le \frac{n}{m}j - \frac{\delta}{m}.
     \end{align*}
      \begin{figure}[H]
      \begin{center}
\begin{tikzpicture}
\draw (-0.7,0) -- (8,0) [->]; \draw (8.5,0) node {$j$};
\draw (0,-0.7) -- (0,4) [->]; \draw (0,4.5) node {$i$};
\draw (1,-0.05) -- (1,0.05); \draw (1,-0.4) node {$1$};
\draw (2,-0.05) -- (2,0.05); \draw (2,-0.4) node {$2$};
\draw (3,-0.05) -- (3,0.05); \draw (3,-0.4) node {$3$};
\draw (4,-0.05) -- (4,0.05); \draw (4,-0.4) node {$4$};
\draw (5,-0.05) -- (5,0.05); \draw (5,-0.4) node {$5$};
\draw (6,-0.05) -- (6,0.05); \draw (6,-0.4) node {$6$};
\draw (7,-0.05) -- (7,0.05); \draw (7,-0.4) node {$7$};
\draw (-0.05,1) -- (0.05,1); \draw (-0.4,1) node {$1$};
\draw (-0.05,2) -- (0.05,2); \draw (-0.4,2) node {$2$};
\draw (-0.05,3) -- (0.05,3); \draw (-0.4,3) node {$3$};
\draw[gray] (1,1) -- (1,3);
\draw[gray] (1,1) -- (7,1);
\draw[gray] (7,1) -- (7,3);
\draw[gray] (1,3) -- (7,3);
\draw[gray] (0,-0.5) -- (8,3.5);
\filldraw [gray] (1,1) circle (1pt);
\filldraw [gray] (1,2) circle (1pt);
\filldraw [gray] (1,3) circle (1pt);
\filldraw [gray] (2,1) circle (1pt);
\filldraw [gray] (2,2) circle (1pt);
\filldraw [gray] (2,3) circle (1pt);
\filldraw (3,1) circle (1pt);
\filldraw [gray] (3,2) circle (1pt);
\filldraw [gray] (3,3) circle (1pt);
\filldraw (4,1) circle (1pt);
\filldraw [gray] (4,2) circle (1pt);
\filldraw [gray] (4,3) circle (1pt);
\filldraw (5,1) circle (1pt);
\filldraw (5,2) circle (1pt);
\filldraw [gray] (5,3) circle (1pt);
\filldraw (6,1) circle (1pt);
\filldraw (6,2) circle (1pt);
\filldraw [gray] (6,3) circle (1pt);
\filldraw (7,1) circle (1pt);
\filldraw (7,2) circle (1pt);
\filldraw (7,3) circle (1pt);
\end{tikzpicture}
      \end{center}
    \caption{The points below the line correspond to holomorphic differentials.
    Illustrated is the case $n=4,m=8$, and thus $g = 9$.}
    \label{fig:holom_diff}
\end{figure}
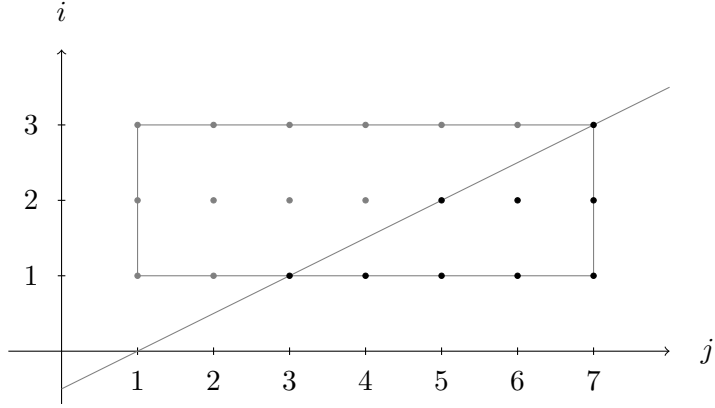

Summing over the vertical lines of the trapezoid, we find the following formula that counts the points.
     \begin{align}\label{eq:r_j}
 \#\W & = \sum_{j = 1}^{m-1} \floor*{\frac{n}{m}j - \frac{\delta}{m}} = \sum_{j = 1}^{m-1} \frac{nj-\delta-r_j}{m} =
  \frac{n}{m} \sum_{j=1}^{m-1} j - \frac{m-1}{m}\delta - \frac{1}{m} \sum_{j=1}^{m-1} r_j,
     \end{align}
      where $r_j = nj - \delta  \bmod m$.

The desired equality  $\#\W = \frac{1}{2}((n-1)(m-1)-\delta+1) = g$ immediately follows from
\begin{lemma}
      \begin{equation*}\label{eq:r_j2}
       \sum_{j=1}^{m-1} r_j = \frac{1}{2}(m^2 - (\delta+2)m + 2\delta).
      \end{equation*}
\end{lemma}
\begin{proof}\let\qed\relax
      Let $l := \frac{m}{\delta}$. First we note that $r_j = r_{j+l}$:
      $$r_{j+l} = n(j+l) - \delta  \bmod m = nj + \frac{n}{\delta}m - \delta  \bmod m =  nj - \delta  \bmod m =  r_j,$$
      and hence
      \begin{align}\label{eq:r_j3}
       \sum_{j=1}^{m-1} r_j = \delta \cdot \sum_{j=1}^{l} r_j - r_m = \delta \cdot \sum_{j=1}^{l} r_j - (-\delta + m).
      \end{align}
      Furthermore, $r_j$ can be written as a multiple of $\delta$:
      $$r_j = \delta \left(\frac{n}{\delta}j - 1\right)  \bmod m.$$
      From $\gcd(\frac{n}{\delta},l) = 1$ we conclude $\left\{  \frac{n}{\delta}j - 1  \bmod l  \mid  1 \le j \le l  \right\} = \{  0,\dots,l-1  \}$.
      Therefore,
      \begin{align}\label{eq:r_j4}
       \sum_{j = 1}^l r_j = \sum_{j = 0}^{l-1} \delta j = \delta \cdot \frac{l(l-1)}{2},
      \end{align}
      and thus \eqref{m-eq:r_j3} and \eqref{m-eq:r_j4} imply
      $$\sum_{j=1}^{m-1} r_j = \delta \cdot \sum_{j=1}^{l} r_j + \delta - m = \delta^2 \cdot \frac{l(l-1)}{2} + \delta - m = \frac{1}{2}(m^2 - (\delta+2)m + 2\delta).$$
\end{proof}
\end{proof}

\begin{rmk}
    Note that from \eqref{m-eq:diff_cases} it follows that the meromorphic differentials in $\WM$ are homolorphic at all finite points.
\end{rmk}

  \section{Strategy for the period matrix}\label{sec:strat_pm}

  In this section we present our strategy to obtain period matrices $\OC, \OA, \OB$ and $\tau$ as defined in
  \S \ref{m-subsec:bases_matrices}. Although this paper is not restricted to the
  case $\gcd(m,n) = 1$, we will briefly assume it in this paragraph to simplify notation.

  The main ingredients were already described in
  Section \ref{m-sec:se_curves}: we integrate the holomorphic differentials in $\W$ (\S \ref{m-subsec:diff_forms})
  over the cycles in $\Gamma$ (\S \ref{m-subsec:cycles_homo}) using numerical integration (\S \ref{m-subsec:de_int}), which results in a period matrix (\S \ref{m-subsec:comp_of_periods})
  \begin{equation*}
      \label{eq:OC}
    \OC = \left( \int_{\gamma} \w \right)_{\substack{\w \in \W, \\ \gamma \in \Gamma}} \in \C^{g \times 2g}.
  \end{equation*}
  The matrices $\OA$ and $\OB$ require a symplectic basis of $\homo$.
  So, we compute the intersection pairing on $\Gamma$, as explained in Section \ref{m-sec:intersections}, which results in a
  intersection matrix $K_{\Gamma} \in \Z^{2g \times 2g}$.
  After computing a symplectic base change $S \in \GL(\Z,2g)$ for $K_{\Gamma}$ (\S \ref{m-subsec:symp_basis}), we obtain a big period matrix
  \begin{equation}
      \label{eq:OAOB}
      (\OA,\OB) = \OC S,
  \end{equation}
   and finally a small period matrix in the Siegel upper half-space
  \begin{equation}
      \label{eq:tau}
   \tau = \OA^{-1} \OB \in \mathfrak{H}_g.
  \end{equation}

  \bigskip

  \subsection{Periods of elementary cycles}\label{subsec:comp_of_periods}

  The following theorem provides a formula for computing the periods of the curve.
  It relates integration of differential forms on the curve to numerical integration in $\C$.

  Note that the statement is true for all differentials in $\WM$, not just the holomorphic ones.
  We continue to use the notation from Section \ref{m-sec:se_curves}.

  \begin{thm}\label{thm:periods}
   Let $\gamma_e^{(l)} \in \Gamma$ be a shift of an elementary cycle corresponding
   to an edge $e = (a,b) \in E$. Then, for all differentials $\w_{i,j} \in \WM$, we have
   \begin{equation}\label{eq:periods}
      \int_{\gamma_e^{(l)}} \w_{i,j}
      =  \zeta^{-lj} (1-\zeta^{-j}) C_{a,b}^{-j} \left(\frac{b-a}{2}\right)^i \int_{-1}^1 \frac{\varphi_{i,j}(u)}{(1-u^2)^{\frac{j}{m}}}  \du,
   \end{equation}
   where
   \begin{equation*}
    \varphi_{i,j}  = \left(u+\frac{b+a}{b-a}\right)^{i-1} \ytab(u)^{-j}
   \end{equation*}
   is holomorphic in a neighbourhood $\epsilon_{a,b}$ of $[-1,1]$.
  \end{thm}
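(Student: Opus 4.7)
The plan is to directly compute the integral by decomposing $\gamma_e^{(l)}$ into its two lifted segments and then reducing the line integral on $[a,b]$ to one on $[-1,1]$ via the affine change of variable $x = x_{a,b}(u)$. The branch $y_{a,b}$ provided by Proposition \ref{m-prop:yab} gives an explicit analytic description of $y$ on a neighbourhood of $]a,b[$, so every step amounts to a substitution.

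First I would split the cycle according to its definition $\gamma_e^{(l)} = \gamma^{(l)}_{[a,b]} \cup \gamma^{(l+1)}_{[b,a]}$. On $\gamma^{(l)}_{[a,b]}$ the $y$-coordinate is $\zeta^l y_{a,b}(x)$, while on $\gamma^{(l+1)}_{[b,a]}$ it is $\zeta^{l+1} y_{a,b}(x)$ and the orientation is reversed. Pulling the scalar factors $\zeta^{-lj}$ and $\zeta^{-(l+1)j}$ out of the two pieces and recombining the reversed segment gives
\begin{equation*}
\int_{\gamma_e^{(l)}} \omega_{i,j} \;=\; \zeta^{-lj}\bigl(1-\zeta^{-j}\bigr)\int_a^b \frac{x^{i-1}}{y_{a,b}(x)^{j}}\,\dx.
\end{equation*}

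Next I would apply the substitution $x = x_{a,b}(u)$ of \S\ref{m-subsubsec:analytic_branches}, under which $[a,b]$ is parametrised by $[-1,1]$ and $\dx = \tfrac{b-a}{2}\,\du$. Writing $x^{i-1} = \bigl(\tfrac{b-a}{2}\bigr)^{i-1}\bigl(u + \tfrac{b+a}{b-a}\bigr)^{i-1}$ and invoking the factorisation $y_{a,b}(x) = C_{a,b}\,\ytab(u)\sqrt[m]{1-u^2}$ from Proposition \ref{m-prop:yab}, one obtains $y_{a,b}(x)^{j} = C_{a,b}^{\,j}\,\ytab(u)^{j}(1-u^2)^{j/m}$. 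Collecting the powers of $(b-a)/2$ yields the prefactor $(\tfrac{b-a}{2})^{i}$ together with the constant $C_{a,b}^{-j}$, and the remaining integrand is exactly $\varphi_{i,j}(u)/(1-u^2)^{j/m}$.

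It remains to check that $\varphi_{i,j}$ is holomorphic on $\epsilon_{a,b}$. The polynomial factor $\bigl(u+\tfrac{b+a}{b-a}\bigr)^{i-1}$ is entire, and by Proposition \ref{m-prop:yab} the function $\ytab$ is holomorphic and non-vanishing on $\epsilon_{a,b}$, so $\ytab^{-j}$ is holomorphic there as well.

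The only real obstacle to watch is the branch accounting: one must verify that the $y$-values prescribed by the parametrisation of $\gamma^{(l+1)}_{[b,a]}$ really differ from those of $\gamma^{(l)}_{[a,b]}$ by exactly a factor $\zeta$ (and not $\zeta^{-1}$), so that the combination produces $(1-\zeta^{-j})$ with the correct sign. This is precisely the content of the last item of Proposition \ref{m-prop:yab}, which fixes the convention that multiplication by $\zeta$ corresponds to moving one sheet upward; once invoked, the computation is purely mechanical.
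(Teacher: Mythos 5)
Your proof is correct and follows essentially the same route as the paper's: split $\gamma_e^{(l)}$ into the two lifted segments, pull out the $\zeta$-powers to get the prefactor $\zeta^{-lj}(1-\zeta^{-j})$, apply the affine change of variable $x = x_{a,b}(u)$, and use the factorisation of $y_{a,b}$ from Proposition~\ref{m-prop:yab}; the only cosmetic difference is that you combine the two segments before substituting, while the paper substitutes first and then notes $\int_{\gamma_{[b,a]}^{(l+1)}} = -\zeta^{-j}\int_{\gamma_{[a,b]}^{(l)}}$.
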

  \begin{proof}
    By the definition in \eqref{m-eq:def_cyabl} we can write $\gamma_e^{(l)} = \gamma_{[a,b]}^{(l)} \cup \gamma_{[b,a]}^{(l+1)}$. Hence we split up the integral and compute
    \begin{align*}\label{eq:thm_periods_2}
     \int_{\gamma_{[a,b]}^{(l)}} \w_{i,j}  & =  \int_{\gamma_{[a,b]}^{(l)}} \frac{x^{i-1}}{y^j}  \dx  =  \zeta^{-lj} \int_a^b \frac{x^{i-1}}{\yab(x)^j}  \dx \\  & =
     \zeta^{-lj} C_{a,b}^{-j}   \int_a^b \frac{x^{i-1}}{\ytab(\uab(x))^j (1-\uab(x)^2)^{\frac{j}{m}}} \dx.
     \intertext{
  Applying the transformation $x \mapsto \xab(u)$ introduces the derivative $\dx = \left(\frac{b-a}{2}\right) \du$ yields
  }
   \int_{\gamma_{[a,b]}^{(l)}} \w_{i,j} & = \zeta^{-lj} C_{a,b}^{-j} \left(\frac{b-a}{2}\right) \int_{-1}^1 \frac{\xab(u)^{i-1}}{\ytab(u)^j (1-u^2)^{\frac{j}{m}}}  \du \\ & =
    \zeta^{-lj} C_{a,b}^{-j} \left(\frac{b-a}{2}\right)^i \int_{-1}^1 \frac{\left(u+\frac{b+a}{b-a}\right)^{i-1}}{\ytab(u)^j (1-u^2)^{\frac{j}{m}}}  \du
  \end{align*}
  Similarly, we obtain
  \begin{equation*}
        \int_{\gamma_{[b,a]}^{(l+1)}} w_{i,j}  =  -\zeta^{-j} \int_{\gamma_{[a,b]}^{(l)}} w_{i,j}.
  \end{equation*}
  By Proposition \ref{m-prop:yab}\,, $\ytab$ is holomorphic and has no zero on $\epsilon_{a,b}$, therefore \\ $\varphi_{i,j}  = \left(u+\frac{b+a}{b-a}\right)^{i-1} \ytab(u)^{-j}$
  is holomorphic on $\epsilon_{a,b}$.
  \end{proof}

   \subsection{Numerical integration}\label{subsec:numerical_integration}

   In order to compute a period matrix $\OC$ the only integrals that have to
   be numerically evaluated are
   the \emph{elementary integrals}
   \begin{equation}\label{eq:elem_ints}
       \int_{-1}^1 \frac{\varphi_{i,j}(u)}{(1-u^2)^{\frac{j}{m}}} \du
   \end{equation}
   for all $\w_{i,j} \in \W$ and $e \in E$. By Theorem \ref{m-thm:periods}, all the periods in $\OC$ are
   then obtained by multiplication of elementary integrals with constants.

 As explained in \S \ref{m-subsec:real_mult}, the actual computations will be done on integrals of the form
\begin{equation}
    \label{eq:elem_num_int}
    I_{a,b}(i,j) = \int_{-1}^1\frac{u^{i-1}\du}{(1-u^2)^{\frac jm}\ytab(u)^j}
\end{equation}
(that is, replacing $(u+\frac{b+a}{b-a})^{i-1}$ by $u^{i-1}$ in the numerator of $\phi_{i,j}$),
the value of elementary integrals being recovered by the polynomial shift
\begin{equation}
    \label{eq:polshift}
    \int_{-1}^1\frac{\varphi_{i,j}(u)}{(1-u^2)^{\frac jm}}\du
    = \sum_{l=0}^{i-1} {i-1 \choose l} \left(\frac{b+a}{b-a}\right)^{i-1-l} I_{a,b}(l,j).
\end{equation}

The rigorous numerical evaluation of \eqref{m-eq:elem_num_int} is
adressed in Section~\ref{m-sec:numerical_integration}: for any edge $(a,b)$,
Theorems~\ref{m-thm:de_int} and \ref{m-thm:gc_int} provide
explicit schemes allowing to attain any prescribed precision.

  \subsection{Minimal spanning tree}\label{subsec:spanning_tree}

  From the a priori analysis of all numerical integrals $I_{a,b}$ along
  the interval $[a,b]$, we choose an optimal set of edges forming a spanning tree as follows:
  \begin{itemize}
      \item
   Consider the complete graph on the set of finite branch points $G' = (X,E')$ where
   $E' = \{  (a,b)  \mid  a,b \in X \}$.
      \item
   Each edge $e = (a,b) \in E'$ gets assigned a capacity $r_e$ that indicates
   the cost of numerical integration along the interval $[a,b]$.
   \item
   Apply a standard `maximal-flow' algorithm from graph theory, based on a greedy approach.
   This results in a spanning tree $G = (X,E)$, where $E \subset E'$ contains the $n-1$ best edges
   for integration that connect all vertices without producing cycles.
  \end{itemize}

   Note that the integration process is most favourable
   between branch points that are far away from the others
   (this notion is made explicit in Section \ref{m-sec:numerical_integration}).

  \subsection{Symplectic basis}\label{subsec:symp_basis}

  By definition, a big period matrix $(\OA,\OB)$ requires integration along
  a symplectic basis of $\homo$. In \S \ref{m-subsec:cycles_homo} we gave
  a generating set $\Gamma$ for $\homo$, namely
  \begin{equation*}
    \Gamma = \left\{  \gamma_{e}^{(l)}  \mid  0 \le l < m-1,  e \in E  \right\},
  \end{equation*}
  where $E$ is the spanning tree chosen above. This generating set is
  in general not a (symplectic) basis.

  We resolve this by computing the intersection pairing on $\Gamma$, that
  is all intersections
  $\gamma_{e}^{(k)} \circ \gamma_{f}^{(l)}\in\{ 0,\pm 1\}$ for $e,f\in E$ and $k,l\in \set{0,\dots m-1}$,
  as explained in Section \ref{m-sec:intersections}.

  The resulting intersection matrix $K_{\Gamma}$
   is a skew-symmetric matrix of dimension \\ $(n-1)(m-1)$
   and has rank $2g$.

   Hence, we can apply an algorithm, based on
    \cite[Theorem 18]{KB2002}, that outputs a symplectic basis
   for $K_{\Gamma}$ over $\Z$, i.e. a unimodular matrix base change matrix $S$ such that
  $$S^T K_{\Gamma}  S = J, \quad \text{where} \quad J = \begin{pmatrix} 0 & I_g & 0 \\ -I_g & 0 & 0 \\ 0 & 0 & 0_{\delta-1} \end{pmatrix}.$$

  The linear combinations of periods given by the first $2g$ columns of $\OC S$ then correspond to a symplectic homology basis
  \begin{equation*}\label{eq:symp_basis}
   (\OA,\OB, 0_{\delta-1}) = \OC S,
  \end{equation*}
  whereas the last $\delta-1$ columns are zero and can be ignored, as they correspond to the dependent cycles
  in $\Gamma$ and contribute nothing.

  \section{Intersections}\label{sec:intersections}

   Let $(a,b)$ and $(c,d)$ be two edges of the spanning tree $E$.
  The formulas in Theorem \ref{m-thm:intsec_numb} allow to compute the intersection
  between shifts of elementary cycles $\left(\cyab^{(k)} \circ \cycd^{(l)}\right)$.

  Note that by construction of the spanning tree,
  we can restrict the analysis to intersections $\left(\cyab^{(k)} \circ \cycd^{(l)}\right)$
  such that $c$ is either $a$ or $b$.

   \begin{thm}[Intersection numbers]\label{thm:intsec_numb}
      Let $(a,b),(c,d) \in E$. The intersections of the corresponding cycles $\cyabk, \cycdl \in \Gamma$ are given by
      \begin{equation*}
          \left(\cyab^{(k)} \circ \cycd^{(l)}\right)
          = \begin{cases}
              1  &\text{ if } l-k \equiv s_+ \bmod m,\\
              -1 &\text{ if } l-k \equiv s_- \bmod m,\\
              0 &\text{ otherwise,}
          \end{cases}
      \end{equation*}
      where $s_+$, $s_-$ are given by the following table, which covers all
      cases occurring in the algorithm
      \begin{center}
          \normalfont
      \begin{tabular}{cccc}
          \toprule
          & case & $s_+$ & $s_-$ \\
          \midrule
         (i) & $a=c$ and $b=d$ & $1$ & $-1$ \\
         (ii) & $b=c$ & $-s_b$ & $1-s_b$ \\
         (iii) & $a=c$ and $\rho>0$ & $1-s_a$ & $-s_a$ \\
         (iv) & $a=c$ and $\rho<0$ & $-s_a$ & $-1-s_a$\\
         (v) & $\set{a,b}\cap\set{c,d}=\varnothing$ & \multicolumn{2}{c}{no intersection} \\
          \bottomrule
      \end{tabular}
      \end{center}
      and where $s_x \in \Z$ for $x\in\set{a,b}$ is given by
      \begin{equation*}\label{eq:s_x}
	    s_x := \frac{1}{2\pi}\left( \rho + m \cdot \arg \left( \frac{C_{c,d} \ytcd(x)}{C_{a,b}\ytab(x)} \right)
	    \right)
      \end{equation*}
     and
      \begin{equation*}
          \rho = \arg \left( \frac{b-a}{d-c} \right) + \delta_{b=c}\pi.
      \end{equation*}
 \end{thm}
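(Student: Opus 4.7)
The plan is to reduce the computation of $\left(\cyab^{(k)} \circ \cycd^{(l)}\right)$ to a local topological analysis at each branch point shared by the two edges, and to extract the sign and multiplicity from the definitions of the local branches $\yab$ and $\ycd$.

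\textbf{Localization.} Case (v) is immediate: when $\set{a,b}\cap\set{c,d}=\varnothing$, the segments $[a,b]$ and $[c,d]$ of the spanning tree can be isotoped (supported away from branch points) to be disjoint in $\P^1$, so their lifts to $\cu$ are disjoint and the intersection vanishes. In the other cases every shared endpoint $x\in\set{a,b}\cap\set{c,d}$ lifts to a ramification point $P_x=(x,0)\in\cu$, and the whole intersection is concentrated in an arbitrarily small neighbourhood $V$ of such $P_x$. On $V$ the $m$ sheets meet only at $P_x$, and in the labelling induced by $\yab$ the cycle $\cyab^{(k)}$ enters $V$ along sheet $k$ and leaves along sheet $k+1$; the analogous description holds for $\cycd^{(l)}$ in the labelling induced by $\ycd$.

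\textbf{Sheet identification via $s_x$.} To compare the two local labellings I would evaluate the ratio $\zeta^l\ycd(\xi)/(\zeta^k\yab(\xi))$ as $\xi\to x$ along the two segments. Because the factor $\sqrt[m]{1-\uab(\xi)^2}$ appearing in $\yab$ has $m$-th root behaviour at $x$ whose phase depends on the direction of approach, this limit involves both the regular ratio $C_{c,d}\ytcd(x)/(C_{a,b}\ytab(x))$ and a planar rotation by $\rho$, with the $\delta_{b=c}\pi$ correction accounting for the orientation reversal that occurs when the shared vertex is $b$ on the first edge and $c$ on the second. Writing this limit in the form $\zeta^{l-k-s_x}$ times a positive real number pins down $s_x$ exactly as defined in the theorem, and reduces the intersection count to counting transverse crossings of four radial arcs at the centre of a small disk around $P_x$, in a cyclic order dictated by $s_x$ and $\rho$.

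\textbf{Sign bookkeeping and case analysis.} Once the four arcs are placed in their cyclic order around $P_x$, the sign of each transverse crossing is determined by the standard convention for $\alpha\circ\beta$: $+1$ when the ordered tangent frame is positively oriented, $-1$ otherwise. A direct inspection then yields the trichotomy of the theorem: $+1$ exactly when $l-k\equiv s_+\pmod m$, $-1$ exactly when $l-k\equiv s_-\pmod m$, and no crossing for any other value. The split on the sign of $\rho$ in cases (iii) and (iv) reflects the two possible interleavings of the outgoing sheets at $a=c$, and the shifted values of $s_\pm$ in (ii) come from the orientation reversal at $b=c$. Case (i) is handled by running the same local argument at both $P_a$ and $P_b$, where $s_a=s_b=0$ since $\yab=\ycd$, and adding the two contributions. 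I expect the main difficulty to lie in the careful tracking of orientations and branch-cut positions throughout this sign analysis: one must verify that the single formula for $s_x$, together with the choice of $\rho$ dictated by which endpoint is shared, produces the correct sign in every geometric configuration, including the limiting cases where $\rho$ approaches $0$ or $\pm\pi$.
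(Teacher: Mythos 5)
Your strategy differs from the paper's at a key point: you localize at the shared ramification point $P_x$, whereas the paper deforms the two cycles so that their projections meet at a single \emph{non-ramified} point $\tilde x$ (the midpoint of $[a,b]$ in case (i), a point on the bisectrix of the two segments in cases (ii)--(iv)), and then evaluates the shifting function $s(\tilde x)$ there; Lemma~\ref{m-lemma:sxt=sx} carries out the parametrization of the bisectrix that shows $s(\tilde x)=s_a$ or $s_b$. The paper explicitly notes that $s$ is \emph{not} well-defined at the common branch point, which is why the bisectrix point is introduced at all; your limiting argument as $\xi\to x$ can recover the same value provided you specify which connected component of $V_{a,b}\cap V_{c,d}$ you approach through, because $s$ is only locally constant on $V$ and its value genuinely changes between components.

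The more serious gap is in case (i). There, the two cycles do not merely share the lifts of the two endpoints: for $l=k\pm1$ the cycles $\cyab^{(k)}$ and $\cyab^{(k\pm1)}$ share an entire lift $\gamma_{[a,b]}^{(\cdot)}$ of the segment (one traversed forward, one backward), so the raw intersection set is a whole arc rather than a finite set of points near $P_a$ and $P_b$. Before you can ``add two local contributions at $P_a$ and $P_b$'' you must perturb the cycles to put them in general position along this shared arc, and then argue that no extra crossings are introduced in the interior; otherwise the bookkeeping at the endpoints is incomplete. The paper sidesteps this by perturbing so the projections meet only at the midpoint and reading off the single sign from a picture. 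Your local-at-$P_x$ scheme can also be made to work (the algebraic intersection is a homotopy invariant, so any admissible perturbation gives the same count), but this is precisely the step you leave unverified, and it is exactly where the nontrivial content sits. The sign and interleaving analysis that you defer (``I expect the main difficulty to lie in the careful tracking of orientations\dots'') is the substance of the paper's proof, carried out case by case with explicit figures and the bisectrix lemma; as written, the proposal states the plan but does not execute the parts that could fail.
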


 \begin{rmk}
     Note that the intersection matrix $K_{\Gamma}$ is composed of
  $(n-1)^2$ blocks of dimension $m-1$, each block corresponding to the intersection
  of shifts of two elementary cycles in the spanning tree. It is very sparse.
 \end{rmk}

 \bigskip
 The proof of Theorem \ref{m-thm:intsec_numb} is contained in the following exposition.
 \bigskip

  Consider
  two cycles $\cyab^{(k)},\cycd^{(l)} \in \Gamma$ and
  recall from \S \ref{m-subsec:roots_branches} their definition
   \begin{align*}
      \cyabk & = \{  (x,\zeta^k \yab(x))  \mid  x \in [a,b]  \} \cup \{  (x,\zeta^{k+1} \yab(x))  \mid  x \in [b,a]  \}, \\
      \cycdl & = \{  (x,\zeta^l \ycd(x))  \mid  x \in [c,d]  \} \cup \{  (x,\zeta^{l+1} \ycd(x))  \mid  x \in [c,d]  \},
   \end{align*}
  where $\zeta^k\yab(x),\zeta^l\ycd(x)$ are branches of $\cu$ that are analytic on open sets $\vab$ and $\vcd$
  (see Figure \ref{m-fig:set_vab}) respectively.

  \begin{proof}
  From the definition we see that $\cyabk \cap \cycdl = \varnothing$, whenever $[a,b] \cap [c,d] = \varnothing$. For edges in a spanning tree
  this is equivalent to $\{a,b\} \cap \{c,d\} = \varnothing$, thus
  proving (v).
  \end{proof}

   Henceforth, we can assume $\{a,b\} \cap \{c,d\} \ne \varnothing$. In order to prove (i)-(iv) we have to introduce some machinery. Since the $\yab(x),\ycd(x)$ are branches of $\cu$,
   on the set $\C \setminus X$
   we can define the \emph{shifting function}
   $s(x)$, that takes values in $\Z/m\Z$, implicitly via
  \begin{equation}\label{eq:shift_func}
   \zeta^{s(x)} = \frac{\ycd(x)}{\yab(x)}.
   \end{equation}
  Naturally, \eqref{eq:shift_func} extends to the other analytic branches via
  \begin{equation*}
   \zeta^{s(x)+l-k} = \frac{\zeta^l\ycd(x)}{\zeta^k\yab(x)}.
   \end{equation*}
   We can now define the non-empty, open, disconnected set
   \begin{equation*}
    V := \vab \cap \vcd \subset \C \setminus X.
   \end{equation*}
 The shifting function $s(x)$ is well-defined on $V$ and, since $\yab(x)$ and $\ycd(x)$ are
  both analytic on $V$, $s(x)$ is constant on its
  connected components.

  In \S \ref{m-subsec:riemann_surface} we established that multiplication of a branch by $\zeta$ corresponds to moving
  one sheet up on the Riemann surface.
  We can interpret the value of the shifting function geometrically as
  $\cycdl$ running $s(\xt)+l-k$ sheets above $\cyabk$ at a point $\xt \in V$.

 This can be used to determine the intersection number in the following way. We deform the cycles homotopically
 such that
 \begin{equation*}
   \pr\left(\cyabk\right) \cap \pr\left(\cycdl\right) = \{ \xt \}  \text{ for some $\xt \in V$.}
 \end{equation*}
 Consequently, the cycles can at most intersect at the
 points in the fiber above $\xt$, i.e.
 \begin{equation*}
  \cyab^{(k)} \cap \cycd^{(l)} \subset \text{pr}_x^{-1}(\xt).
 \end{equation*}
 Note that, by definition, any cycle in $\Gamma$ only runs on two neighbouring sheets, which already implies
 \begin{equation*}
   \left(\cyab^{(k)} \circ \cycd^{(l)}\right) = 0, \text{ if $s(\xt)+l-k \not\in \{-1,0,1\}$.}
 \end{equation*}
  In the other cases we can determine the
  sign of possible intersections by taking into account the orientation of the cycles.

 We continue the proof with case (i):
 Here we have $[a,b] = [c,d]$. Trivially, $\left( \cyabk \circ \cyabk \right) = 0$ holds. For $k \ne l$ we deform the cycles such that they only intersect above
 $\xt = \frac{b+a}{2} \in  \vab= V$.
  We easily see that $s(\xt) = 0$ and therefore $s(\xt) + l - k = l - k$. The remaining non-trivial cases
 ($l = k \pm 1$), are shown in Figure \ref{m-fig:int_self_shift} below where
   the cycles $\cyabk$ (black),
      $\cyab^{(k+1)}$ (red) and $\cyab^{(k-1)}$ (green) are illustrated. 
    \begin{figure}[H]
      \begin{center}
   \scalebox{0.8}{\begin{tikzpicture}
     \draw [densely dashed] (-3,3) -- (-1.8,3); 
     \draw [densely dashed] (3,3) -- (1.8,3); 
     \draw (-1.8,3) circle (1.3pt); 
     \draw (1.8,3) circle (1.3pt);
     \draw [red] (-2.5,3) .. controls (-2.5,2.6) and (-1,2.4) .. (0,3) [halfarrow2];
     \draw [red] (0,3) .. controls (1,3.6) and (2.5,3.4) .. (2.5,3) [halfarrow2];

     \draw [densely dashed] (-3,1) -- (-1.8,1); 
     \draw [densely dashed] (3,1) -- (1.8,1); 
     \draw (-1.8,1) circle (1.3pt); 
     \draw (1.8,1) circle (1.3pt);
     \draw[red] (-2.5,1) .. controls (-2.5,1.6) and (-1,1.4) .. (0,1) [halfarrow1];
     \draw[red] (0,1) .. controls (1,0.6) and (2.5,0.4) .. (2.5,1) [halfarrow1];
     \draw (-2.5,1) .. controls (-2.5,0.6) and (-1,0.4) .. (0,1) [halfarrow2];
     \draw (0,1) .. controls (1,1.6) and (2.5,1.4) .. (2.5,1) [halfarrow2];

    \draw (0,1) node[cross=3pt,black]{};
    \draw (0,1.3) node {$+1$};
    \draw (-1.8,0) node {$a$};  
    \draw (0,0) node {$\text{pr}_x^{-1}(\xt)$}; 
    \draw (1.8,0) node {$b$};

    \draw [densely dashed] (-3,-1) -- (-1.8,-1); 
    \draw [densely dashed] (3,-1) -- (1.8,-1); 
    \draw (-1.8,-1) circle (1.3pt); 
    \draw (1.8,-1) circle (1.3pt);
    \draw (-2.5,-1) .. controls (-2.5,-0.4) and (-1,-0.6) .. (0,-1) [halfarrow1];
    \draw (0,-1) .. controls (1,-1.4) and (2.5,-1.6) .. (2.5,-1) [halfarrow1];
    \draw [green] (-2.5,-1) .. controls (-2.5,-1.4) and (-1,-1.6) .. (0,-1) [halfarrow2];
    \draw [green] (0,-1) .. controls (1,-0.4) and (2.5,-0.6) .. (2.5,-1) [halfarrow2];

    \draw (0,-1) node[cross=3pt,black]{};
    \draw (0,-1.3) node {$-1$};

    \draw [densely dashed] (-3,-3) -- (-1.8,-3); 
    \draw [densely dashed] (3,-3) -- (1.8,-3); 
    \draw (-1.8,-3) circle (1.3pt); 
    \draw (1.8,-3) circle (1.3pt);
    \draw[green] (-2.5,-3) .. controls (-2.5,-2.4) and (-1,-2.6) .. (0,-3) [halfarrow1];
    \draw[green] (0,-3) .. controls (1,-3.4) and (2.5,-3.6) .. (2.5,-3) [halfarrow1];
\end{tikzpicture}
}
      \end{center}
    \caption{Intersections of self-shifts.}
    \label{fig:int_self_shift}
\end{figure}
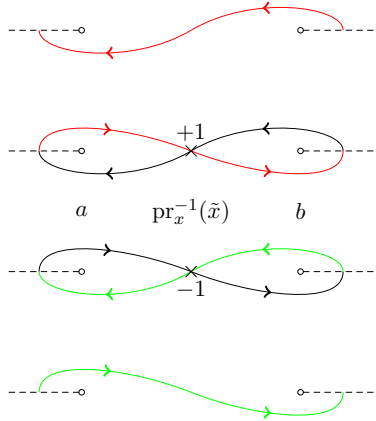
  We see that, independently of $s(\xt)$, $s_+ = (k+1)-k = 1$ and $s_- = (k-1)-k = -1$ are as claimed.

  For (ii)-(iv) we have that $[a,b] \cap [c,d] = \{c \}$, where $c$ is either $a$ or $b$. Unfortunately, in these cases $s(c)$ is not
  well-defined.

 Instead, we choose a point $\xt \in \C \setminus X$ on the bisectrix of
  $[a,b]$ and $[c,d]$ that is close enough to $c$ such that $[\xt,c[ \subset V = \vab \cap \vcd$
  (see Figure \ref{m-fig:set_v_both} below), and where
  \begin{equation}\label{eq:sxt}
   s(\xt) = \frac{m}{2\pi} \arg \left( \frac{\ycd(\xt)}{\yab(\xt)} \right).
  \end{equation}
    \begin{figure}[H]
      \begin{center}
   \scalebox{.9}{
       \begin{tikzpicture}
    
  \draw (-3.3,0) node {$a$};  \draw (-0.3,0) node {$b$}; \draw (-1.8,0) node {$V_{a,b}$};
  \draw (-3.6,0) circle (0.8pt); 
  \draw  (0,0) circle (0.8pt);
  \draw [densely dashed] (-4.8,0) -- (-3.6,0);   
  \draw [densely dashed] (1.5,0) -- (0,0);
  \draw (-4.2,0.05) .. controls (-3.8,1.5) and (0.2,1.5) .. (0.6,0.05);
  \draw (-4.2,-0.05) .. controls (-3.8,-1.5) and (0.2,-1.5) .. (0.6,-0.05);
  \draw (-4.2,-0.05) -- (-3.5,-0.05); \draw (0.6,-0.05) -- (-0.1,-0.05);
  \draw (-4.2,0.05) -- (-3.5,0.05); \draw (0.6,0.05) -- (-0.1,0.05);
  \draw (-3.5,0.05) -- (-3.5,-0.05); \draw (-0.1,0.05) -- (-0.1,-0.05);
    
  \draw (1.8,3.117) circle (0.8pt); 
  \draw (1.8,2.8) node {$d$}; \draw (1.0,1.5) node {$V_{b,d}$};
  \draw [densely dashed] (1.8,3.117) -- (2.4,4.156); 
  \draw [densely dashed] (0,0) -- (-0.8,-1.386);
  \draw [rotate=240,shift={(0,0)}] (-4.2,0.05) .. controls (-3.8,1.5) and (0.2,1.5) .. (0.6,0.05);
  \draw [rotate=240,shift={(0,0)}] (-4.2,-0.05) .. controls (-3.8,-1.5) and (0.2,-1.5) .. (0.6,-0.05);
  \draw [rotate=240,shift={(0,0)}] (-4.2,-0.05) -- (-3.5,-0.05);  \draw [rotate=240,shift={(0,0)}] (0.6,-0.05) -- (-0.1,-0.05);
  \draw [rotate=240,shift={(0,0)}](-4.2,0.05) -- (-3.5,0.05); \draw [rotate=240,shift={(0,0)}] (0.6,0.05) -- (-0.1,0.05);
  \draw [rotate=240,shift={(0,0)}] (-3.5,0.05) -- (-3.5,-0.05); \draw [rotate=240,shift={(0,0)}] (-0.1,0.05) -- (-0.1,-0.05);
       
  \draw [dotted] (-1,1.732) -- (0.75,-1.3);
  \draw (-0.35,0.606) circle (0.8pt); \draw (-0.1,0.6) node {$\tilde{x}$};


  \draw (5.1,0.3) node {$a$};
  \draw (7.3,2.3) node {$b$};
  \draw (7.3,-2.3) node {$d$};
  \draw (6.5,1.5) node {$V_{a,b}$};
  \draw (6.5,-1.5) node {$V_{a,d}$};
  \draw (5,0) circle (0.8pt);
  \draw (7.545,-2.545) circle (0.8pt); 
  \draw (7.545,2.545) circle (0.8pt);
  \draw [densely dashed] (5,0) -- (5-0.848,0.848);   
  \draw [densely dashed] (5,0) -- (5-0.848,-0.848);
  \draw [densely dashed] (7.545,-2.545) -- (8.39,-3.39);
  \draw [densely dashed] (7.545,2.545) -- (8.39,3.39);
  \draw [dotted] (5-1,0) -- (8,0);
  \draw (5.5,0) circle (0.8pt); \draw (5.7,0) node {$\tilde{x}$};
  
  \draw [shift={(5,0)},rotate=225] (0.6,0.05) .. controls (0.2,1.5) and (-3.8,1.5) .. (-4.2,0.05);
  \draw [shift={(5,0)},rotate=225] (0.6,-0.05) .. controls  (0.2,-1.5) and (-3.8,-1.5) .. (-4.2,-0.05);
  \draw [shift={(5,0)},rotate=225] (-3.5,-0.05) -- (-4.2,-0.05); \draw [shift={(5,0)},rotate=225] (-0.1,-0.05) -- (0.6,-0.05);
  \draw [shift={(5,0)},rotate=225] (-3.5,0.05) -- (-4.2,0.05); \draw [shift={(5,0)},rotate=225] (-0.1,0.05) -- (0.6,0.05);
  \draw [shift={(5,0)},rotate=225] (-3.5,-0.05) -- (-3.5,0.05);  \draw [shift={(5,0)},rotate=225] (-0.1,-0.05) -- (-0.1,0.05);
  
  \draw [shift={(5,0)},rotate=-225] (0.6,0.05) .. controls (0.2,1.5) and (-3.8,1.5) .. (-4.2,0.05);
  \draw [shift={(5,0)},rotate=-225] (0.6,-0.05) .. controls  (0.2,-1.5) and (-3.8,-1.5) .. (-4.2,-0.05);
  \draw [shift={(5,0)},rotate=-225] (-3.5,-0.05) -- (-4.2,-0.05); \draw [shift={(5,0)},rotate=-225] (-0.1,-0.05) -- (0.6,-0.05);
  \draw [shift={(5,0)},rotate=-225] (-3.5,0.05) -- (-4.2,0.05); \draw [shift={(5,0)},rotate=-225] (-0.1,0.05) -- (0.6,0.05);
  \draw [shift={(5,0)},rotate=-225] (-3.5,-0.05) -- (-3.5,0.05);  \draw [shift={(5,0)},rotate=-225] (-0.1,-0.05) -- (-0.1,0.05);
\end{tikzpicture}
}
      \end{center}
     \vspace{-1cm}
    \caption{The set $V = \vab \cap \vcd$ for $b=c$ (left) and $a=c$ (right).}
    \label{fig:set_v_both}
   \end{figure}
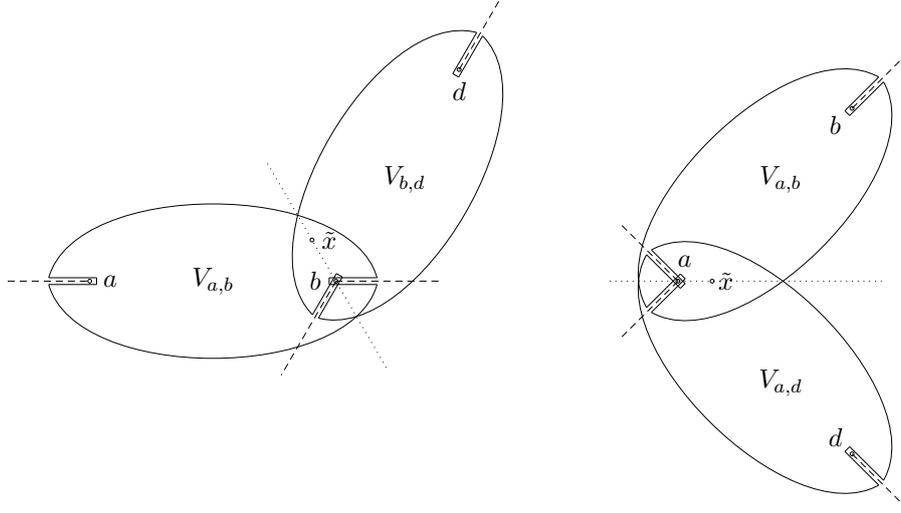

   Case (ii): \\
   In this case we have $b = c$. Choosing $\xt$ on the upper bisectrix (as shown in Figure \ref{m-fig:set_v_both})
   and computing $s(\xt)$ with \eqref{m-eq:sxt} makes it possible to determine the intersection numbers geometrically.

   Figure \ref{m-fig:int_b=c} shows the non-trivial cases $s(\xt) + l - k \in \{-1,0,1\}$.
   There the cycles $\cyab^k$ (black), $\cybd^{k-s(\xt)}$ (gray), $\cybd^{k-s(\xt) + 1 }$ (green) and
    $\cybd^{k-s(\xt)-1}$ (red) are illustrated.
    \begin{figure}[H]
      \begin{center}
   \scalebox{0.8}{
\begin{tikzpicture}

  \draw (-3.6,2) node {$a$};  \draw (0,2) node {$b$};  \draw (1.8,5.117) node {$d$};

    \draw (-0.35,4.606) node[cross=3pt,black,rotate=80]{};
    \draw (-0.25,5) node {$-1$};    
    \draw (-0.35,0.606) node[cross=3pt,black,rotate=80]{};
    \draw (0.0,0.5) node {$+1$}; 

    \draw (-3.6,8) circle (0.8pt); 
    \draw (0,8) circle (0.8pt);   
    \draw (1.8,11.117) circle (0.8pt); 
    \draw [densely dashed] (-4.8,8) -- (-3.6,8);
    \draw [densely dashed] (0,8) -- (1.2,8); 
    \draw [densely dashed] (0,8) -- (-0.6,8-1.04);
    \draw [densely dashed] (1.8,11.117) -- (2.4,12.156);
    \draw [green] (1.95,11.377) .. controls (0.2,12) and (1.5,6.8) .. (-0.4,7.3) [halfarrow1];

    \draw (-3.6,4) circle (0.8pt); 
    \draw (0,4) circle (0.8pt);   
    \draw (1.8,7.117) circle (0.8pt);   
    \draw [densely dashed] (-4.8,4) -- (-3.6,4);
    \draw [densely dashed] (0,4) -- (1.2,4); 
    \draw [densely dashed] (0,4) -- (-0.6,4-1.04);
    \draw [densely dashed] (1.8,7.117) -- (2.4,8.156);
    \draw (-4.1,4) .. controls (-4,2.5) and (0.4,5.95) .. (0.45,4) [halfarrow2];
    \draw [green] (1.95,7.377) .. controls (2.8,7) and (2,5.5) .. (-0.35,4.606) [halfarrow2];
    \draw [green] (-0.35,4.606) arc(110:245:0.7cm) [halfarrow2];
    \draw [gray] (1.95,7.377) .. controls (0.2,8) and (1.5,2.8) .. (-0.4,3.3) [halfarrow1];

    \draw (-3.6,0) circle (0.8pt); 
    \draw (0,0) circle (0.8pt);   
    \draw (1.8,3.117) circle (0.8pt); 
    \draw [densely dashed] (-4.8,0) -- (-3.6,0);
    \draw [densely dashed] (0,0) -- (1.2,0); 
    \draw [densely dashed] (0,0) -- (-0.6,-1.04);
    \draw [densely dashed] (1.8,3.117) -- (2.4,4.156);
    \draw (-4.1,0) .. controls (-4,1) and (0.4,1.5) .. (-0.45,0.4) [halfarrow1];
    \draw [gray] (1.95,3.377) .. controls (2.8,3) and (2,1.5) .. (-0.35,0.606) [halfarrow2];
    \draw (-0.45,0.4) arc(150:342:0.5cm) [halfarrow1];
    \draw [gray] (-0.35,0.606) arc(110:245:0.7cm) [halfarrow2];
    \draw [red] (1.95,3.377) .. controls (0.2,4) and (1.5,-1.2) .. (-0.4,-0.7) [halfarrow1];

    \draw [black] (-3.6,-4) circle (0.8pt); 
    \draw [black] (0,-4) circle (0.8pt);   
    \draw [black] (1.8,-0.883) circle (0.8pt); 
    \draw [densely dashed] (-4.8,-4) -- (-3.6,-4);
    \draw [densely dashed] (0,-4) -- (1.2,-4); 
    \draw [densely dashed] (0,-4) -- (-0.6,-5.04);
    \draw [densely dashed] (1.8,-4+3.117) -- (2.4,0.156);
    \draw [red] (-0.35,-3.394) arc(110:245:0.7cm) [halfarrow2];
    \draw [red] (1.95,-0.623) .. controls (2.8,-1) and (2,-2.5) .. (-0.35,-3.394) [halfarrow2];
 
\end{tikzpicture}
}
      \end{center}
    \caption{Intersections for $b=c$.}
    \label{fig:int_b=c}
   \end{figure}
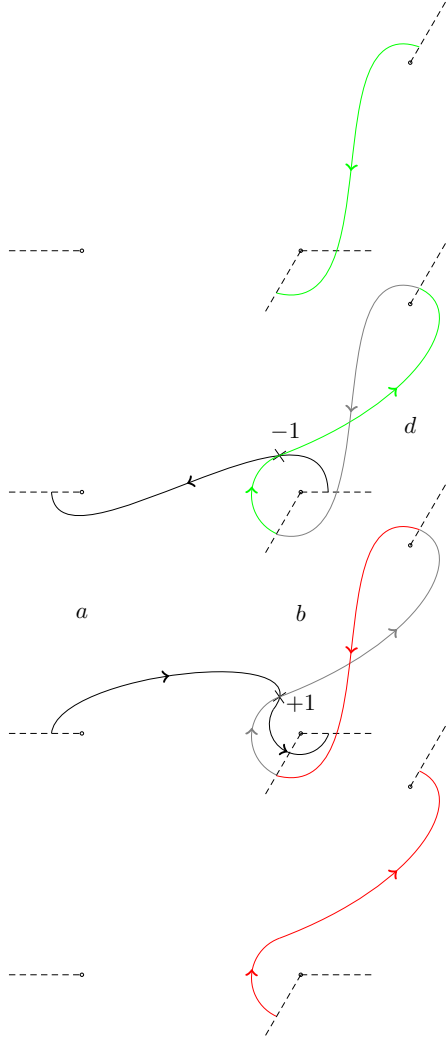
    By Lemma \ref{lemma:sxt=sx} (1) we have $s(\xt) \equiv s_b$, which implies (as claimed)
    \begin{align*}
    & s_+ \equiv k-s(\xt)-k \equiv -s_b \mod m, \\
    & s_- \equiv k-s(\xt)+1-k \equiv 1-s_b \mod m.
    \end{align*}

 Cases (iii) and  (iv): \\
 In these cases we have $a = c$. We choose $\xt$ on the inner bisectrix (as shown in Figure \ref{m-fig:set_v_both})
   and compute $s(\xt)$ with \eqref{m-eq:sxt}.

 For $\varphi = \arg\left(\frac{b-a}{d-c}\right) > 0$, the non trivial cases, i.e. $s(\xt) + l - k \in \{-1,0,1\}$, are shown in Figure \ref{m-fig:int_a=c}
  We illustrate the cycles $\cyab^{(k)}$ (black), $\cyad^{(k-s(\xt))}$ (gray), $\cyad^{(k-s(\xt) + 1)}$ (green) and
    $\cyad^{k-s(\xt)-1}$ (red).
  \begin{figure}[H]
      \begin{center}
   \scalebox{0.8}{
\begin{tikzpicture}


    \draw (5.5,0) node[cross=3pt,black,rotate=60]{};
    \draw (5.6,0.4) node {$-1$};    
    \draw (10.5,0) node[cross=3pt,black,rotate=60]{};
    \draw (11,0) node {$+1$};
 
\draw (0,0) circle (0.8pt);
\draw (2.545,-2.545) circle (0.8pt); 
\draw (2.545,2.545) circle (0.8pt);
\draw [densely dashed] (0,0) -- (-0.848,0.848);   
\draw [densely dashed] (0,0) -- (-0.848,-0.848);
\draw [densely dashed] (2.545,-2.545) -- (3.39,-3.39);
\draw [densely dashed] (2.545,2.545) -- (3.39,3.39);    

\draw [red]  (-0.353,0.353) .. controls (0.8,1.3) and (1.3,-3.7) .. (2.757,-2.757) [halfarrow1];

\draw (5,0) circle (0.8pt);
\draw (7.545,-2.545) circle (0.8pt); 
\draw (7.545,2.545) circle (0.8pt);
\draw [densely dashed] (5,0) -- (5-0.848,0.848);   
\draw [densely dashed] (5,0) -- (5-0.848,-0.848);
\draw [densely dashed] (7.545,-2.545) -- (8.39,-3.39);
\draw [densely dashed] (7.545,2.545) -- (8.39,3.39); 

\draw [black] (5.5,0) arc (30:200:0.5cm) [halfarrow2];
\draw [black] (5.5,0) .. controls (6.25,-0.75) and (8.6,2.5) .. (7.757,2.757) [halfarrow1];

\draw [red]  (5-0.353,0.353) .. controls (2.9,-1.1) and (9.1,-1.6) .. (7.757,-2.757) [halfarrow2];
\draw [gray]  (5-0.353,0.353) .. controls (5.8,1.3) and (6.3,-3.7) .. (7.757,-2.757) [halfarrow1];

\draw (10,0) circle (0.8pt);
\draw (12.545,-2.545) circle (0.8pt); 
\draw (12.545,2.545) circle (0.8pt);
\draw [densely dashed] (10,0) -- (10-0.848,0.848);   
\draw [densely dashed] (10,0) -- (10-0.848,-0.848);
\draw [densely dashed] (12.545,-2.545) -- (13.39,-3.39);
\draw [densely dashed] (12.545,2.545) -- (13.39,3.39);

\draw [black] (10-0.353,-0.353) .. controls (10.8,-1.3) and (11.3,3.7) .. (12.757,2.757) [halfarrow2]; 
\draw [green]  (10-0.353,0.353) .. controls (10.8,1.3) and (11.3,-3.7) .. (12.757,-2.757) [halfarrow1];
\draw [gray]  (10-0.353,0.353) .. controls (7.9,-1.1) and (14.1,-1.6) .. (12.757,-2.757) [halfarrow2];

\draw (15,0) circle (0.8pt);
\draw (17.545,-2.545) circle (0.8pt); 
\draw (17.545,2.545) circle (0.8pt);
\draw [densely dashed] (15,0) -- (15-0.848,0.848);   
\draw [densely dashed] (15,0) -- (15-0.848,-0.848);
\draw [densely dashed] (17.545,-2.545) -- (18.39,-3.39);
\draw [densely dashed] (17.545,2.545) -- (18.39,3.39);

\draw [green]  (15-0.353,0.353) .. controls (12.9,-1.1) and (19.1,-1.6) .. (17.757,-2.757) [halfarrow2];

\draw (15.5,0) node {$a$};
\draw (17,2.545) node {$b$};
\draw (17,-2.545) node {$d$};
\end{tikzpicture}
}
      \end{center}
    \caption{Intersections for $a=c$ and $\varphi > 0$.}
    \label{fig:int_a=c}
   \end{figure}
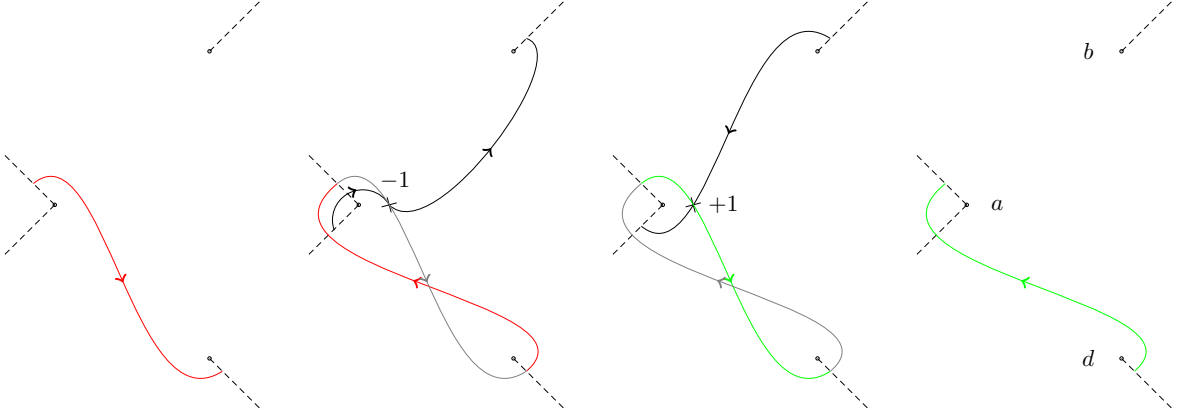
   Lemma \ref{lemma:sxt=sx} (2) gives us $s(\xt) \equiv s_a$, which implies (as claimed for $\varphi > 0$)
    \begin{align*}
    & s_+ = k-s(\xt)+1-k \equiv 1-s_a \mod m, \\
    & s_- = k-s(\xt)-k \equiv -s_a \mod m.
    \end{align*}
  The case $\varphi < 0$ is easily derived by symmetry: if we mirror Figure \ref{m-fig:int_a=c} at the horizontal line through $a$ we are in case (iv).
  There, the intersection is positive if $\cyabk$ and $\cyadl$ start on the same sheet and negative if $\cyadl$ starts one sheet below $\cyabk$.

\begin{lemma}\label{lemma:sxt=sx}
  With the choices made in the proof of Theorem \ref{m-thm:intsec_numb} the following statements hold
  \begin{itemize}
   \item[(1)] $s(\xt) \equiv s_b \bmod m$ in case {\upshape{(ii)}},
    \item[(2)]  $s(\xt) \equiv s_a \bmod m$ in the cases {\upshape{(iii)}} and {\upshape{(iv)}}.
  \end{itemize}
\end{lemma}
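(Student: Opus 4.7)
The plan is to substitute the explicit decomposition from Proposition \ref{m-prop:yab} into the defining identity $\zeta^{s(\tilde x)} = y_{c,d}(\tilde x)/y_{a,b}(\tilde x)$ and compute the limit as $\tilde x \to c$ along the chosen bisectrix. More precisely, I write
\begin{equation*}
\frac{y_{c,d}(\tilde x)}{y_{a,b}(\tilde x)}
= \frac{C_{c,d}\,\tilde y_{c,d}(u_{c,d}(\tilde x))}{C_{a,b}\,\tilde y_{a,b}(u_{a,b}(\tilde x))}
\cdot R(\tilde x),
\qquad R(\tilde x) = \frac{\sqrt[m]{1-u_{c,d}(\tilde x)^2}}{\sqrt[m]{1-u_{a,b}(\tilde x)^2}},
\end{equation*}
and take arguments. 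Since $[\tilde x,c[\,\subset V$ is connected, the left-hand side is a constant $m$-th root of unity on this segment, while the holomorphic factor involving $\tilde y_{c,d}/\tilde y_{a,b}$ extends continuously to $\tilde x = c$, providing the $\arg(C_{c,d}\tilde y_{c,d}(c)/C_{a,b}\tilde y_{a,b}(c))$ term appearing in $s_x$.

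The remaining task is to show that the $R(\tilde x)$-factor contributes exactly $\rho/m$ to the argument. Parametrize $\tilde x = c + t\,e^{i\theta}$ for small $t>0$, where $\theta$ is the direction of the chosen bisectrix. Using the factorization $1-u^2 = (1-u)(1+u)$, one of the factors stays bounded away from $0$ while the other vanishes linearly in $t$, giving
\begin{equation*}
1 - u_{a,b}(\tilde x)^2 \sim -\frac{4te^{i\theta}}{b-a},\quad
1 - u_{c,d}(\tilde x)^2 \sim \pm\frac{4te^{i\theta}}{d-c}
\end{equation*}
(the sign depending on whether $c=b$ or $c=a$). Taking the ratio the $te^{i\theta}$ factors cancel and we obtain the limit $-(b-a)/(d-c)$ in case (ii) and $(b-a)/(d-c)$ in cases (iii)/(iv); in both instances the argument of the limit is precisely $\rho$ by definition.

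The delicate point, and the main obstacle, is that $R(\tilde x)$ is an $m$-th root, so a priori $\arg R(\tilde x)$ is only determined by $\arg R(\tilde x)^m$ modulo $2\pi/m$. I therefore have to check that on the chosen bisectrix (upper in (ii), inner in (iii)/(iv)), the principal branches selected for $\sqrt[m]{1-u_{a,b}(\tilde x)^2}$ and $\sqrt[m]{1-u_{c,d}(\tilde x)^2}$ combine to give the specific $m$-th root whose argument is $\rho/m$ modulo $2\pi$ (as opposed to an offset by a multiple of $2\pi/m$). Concretely, I plug the parametrization into each factor, read off $\arg(1-u_{a,b}(\tilde x)^2)$ and $\arg(1-u_{c,d}(\tilde x)^2)$ within the convention $\arg(\cdot)\in(-\pi,\pi]$ used for the principal $m$-th root, and verify by a direct angular computation that their difference is $\rho$ in the appropriate interval. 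The bisectrix is precisely the locus where this angular bookkeeping is cleanest: it is symmetric with respect to the two intervals and avoids the branch cuts of $\sqrt[m]{1-u_{a,b}^2}$ and $\sqrt[m]{1-u_{c,d}^2}$. Once this branch identification is made, multiplying by $m/(2\pi)$ and adding the contribution of the $\tilde y$-factors yields $s(\tilde x) \equiv s_b$ (resp.\ $s_a$) modulo $m$, proving the lemma.
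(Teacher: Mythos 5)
Your proposal is correct and follows essentially the same route as the paper: you split off the $m$-th root ratio $R$, take arguments, parametrize the bisectrix, and pin down the branch of $R$ by direct angular bookkeeping with the principal-root convention, which is precisely the content of the paper's explicit $\arg(1\pm u)$ computation. One small slip in your asymptotics (it does not affect the conclusion): the sign variation actually sits on the $1-\uab(\xt)^2$ factor, negative when $c=b$ since $\uab(\xt)\to 1$ and positive when $c=a$ since $\uab(\xt)\to -1$, while $1-\ucd(\xt)^2\sim 4te^{i\theta}/(d-c)$ in both cases because $\ucd(\xt)\to -1$ always; the ratio $R^m$ and hence $\rho$ come out as you stated.
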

 \begin{proof}
 Starting from equation \eqref{m-eq:sxt}, for all $x \in \C \setminus X$ we have
   \begin{align*}
   s(x) & = \frac{m}{2\pi} \arg \left( \frac{\ycd(x)}{\yab(x)} \right)
   \equiv \frac{m}{2\pi} \left( \arg \left( \frac{(1- \ucd(x)^2)\mr}{(1-\uab(x)^2)\mr} \right) + \arg \left( \frac{ \ccd \ytcd(x)}{\cab
   \ytab(x)} \right) \right) \\
   & \equiv \frac{1}{2\pi} \left( \arg(1+\ucd(x)) + \arg(1-\ucd(x)) - \arg(1+\uab(x)) - \arg(1-\uab(x)) \right) \\
   & +  \frac{m}{2\pi} \left( \arg \left( \frac{ \ccd \ytcd(x)}{\cab\ytab(x)} \right)\right) \mod m.
  \end{align*}
 In case (ii) we have $b = c$ and denote $\varphi_0 = \arg \left( \frac{b-a}{d-c} \right)$. Then, we can parametrize all points $\xt \ne b$ on
 the upper bisectrix of $[a,b]$ and
 $[a,d]$ (see Figure \ref{m-fig:set_v_both}) via
 \begin{align*}
  \xt & = \xbd(-1+t \exp(i(\pi+\varphi_0)/2)) \text{  as well as} \\
  \xt & = \xab(1-t \exp(-i (\pi+\varphi_0)/2))
 \end{align*}
 for some $t > 0$. Therefore,
 \begin{align*}
   & \arg (1+\ubd(\xt)) = \frac{\pi+\varphi_0}{2} \text{ and} \\
   & \arg (1-\uab(\xt)) = -\frac{\pi+\varphi_0}{2}.
 \end{align*}
 For $\xt$ chosen close enough to $b$ we have that $[\xt,b[ \subset V$ and the shifting function $s(\xt)$ is constant as $\xt$ tends towards $b$. Hence,
 we can compute its value at $\xt$ as
 \begin{align*}
  s(\xt) & \equiv \frac{1}{2\pi} \left( \pi + \varphi_0 + \arg(1-\ubd(\xt)) - \arg(1+\uab(\xt)) +  m \arg \left( \frac{ \cbd \ytcd(\xt)}{\cab\ytab(\xt)} \right)\right) \\
	 & \equiv \frac{1}{2\pi} \left ( \varphi +\arg(1-\ubd(b)) - \arg(1+\uab(b)) +  m \arg \left( \frac{ \cbd \ytbd(b)}{\cab\ytab(b)} \right)\right) \\
	 & \equiv \frac{1}{2\pi} \left( \varphi +\arg(2) - \arg(2) +  m \arg \left( \frac{ \ccd \ytbd(b)}{\cab\ytab(b)} \right)\right) \equiv s_b \mod m,
 \end{align*}
thus proving (1).

  In the cases (iii) and (iv) we have $a = c$ and denote $\varphi =  \arg \left( \frac{b-a}{d-c} \right)$. For $\varphi > 0$ we can parametrize all points $\xt \ne a$ on the inner
  bisectrix of $[a,b]$ and $[a,d]$ (see Figure \ref{m-fig:set_v_both}) via
 \begin{align*}
  \xt & = \xad(-1+t \exp(i \varphi /2)) \text{  as well as} \\
  \xt & = \xab(-1+t \exp(-i \varphi/2))
 \end{align*}
 for some $t > 0$. Therefore,
 \begin{align*}
   & \arg (1+\uad(\xt)) = \frac{\varphi}{2} \text{ and} \\
   & \arg (1+\uab(\xt)) = -\frac{\varphi}{2}.
 \end{align*}
 As before, we let $\xt$ tend towards $a$ and compute the shifting function at $\xt$ as
 \begin{align*}
  s(\xt) & \equiv \frac{1}{2\pi} \left( \varphi + \arg(1-\uad(\xt)) - \arg(1+\uab(\xt)) +  m \arg \left( \frac{ \cad \ytad(\xt)}{\cab\ytab(\xt)} \right)\right) \\
	 & \equiv \frac{1}{2\pi} \left ( \varphi +\arg(1-\uad(a)) - \arg(1-\uab(a)) +  m \arg \left( \frac{ \cad \ytad(a)}{\cab\ytab(a)} \right)\right) \\
	 & \equiv \frac{1}{2\pi} \left( \varphi +\arg(2) - \arg(2) +  m \arg \left( \frac{ \ccd \ytad(a)}{\cab\ytab(a)} \right)\right) \equiv s_a \mod m.
 \end{align*}
 The case $\varphi < 0$ is proved analogously.
\end{proof}

\begin{rmk}
  The intersection numbers given by Theorem \ref{m-thm:intsec_numb} are independent of the choices of $\xt$ that were made in the proof. This approach works for any $\xt \in V$.

  Even though the value of
  $s(\xt)$ changes, if we choose $\xt$ in a different connected component of $V$, e.g.\ on the lower bisectrix in case (ii),
  the parametrization of the bisectrix and the corresponding arguments will change accordingly.
\end{rmk}

\section{Numerical integration}\label{sec:numerical_integration}

As explained in Section \ref{m-subsec:numerical_integration}, the periods
of the generating cycles $\gamma\in \Gamma$ are expressed in terms of
elementary integrals
\eqref{m-eq:elem_num_int}
\begin{equation*}
    I_{a,b}(i,j) = \int_{-1}^1\frac{u^{i-1}\du}{(1-u^2)^{\frac jm}\ytab(u)^j}
\end{equation*}
where $(a,b)\in E$ and $\omega_{i,j}\in\W$.
We restrict the numerical analysis to this case.

In this section, we denote by $α$ the value $1-j/m$, which is the crucial parameter for numerical integration.
Note that $α=1/2$ for hyperelliptic curves, while for general superelliptic curves $α$ ranges
from $1/m$ to $\frac{m-1}m$ depending on the differential form $\omega_{i,j}$ considered.

We study here two numerical integration schemes which are suitable for arbitrary
precision computations:
\begin{itemize}
    \item
        the double-exponential change of variables is completely general \cite{Molin2010} and its robustness
allows to compute rigorously all integrals of periods in a very unified setting
even with different values of $\alpha$;
\item in the special case of hyperelliptic curves however,
    the Gauss-Chebychev method \cite[25.4.38]{AbramowitzStegun} applies and
    provides a better scheme (fewer and simpler integration points).
\end{itemize}
For $m > 2$, the periods could also be computed using general Gauss-Jacobi integration
of parameters $\alpha,\alpha$. However, a different scheme has to be computed for each $\alpha$
and it now involves computing roots of general Jacobi polynomials to large accuracy, which
makes it hard to compete with the double-exponential scheme.

\begin{rmk}
    Even for hyperelliptic curves it can happen that the double exponential scheme outperforms Gauss-Chebychev on
    particular integrals. This is easy to detect in practice and we can always switch to
    the best method.
\end{rmk}

\subsection{Double-exponential integration}\label{m-subsec:de_int}

Throughout this section, $λ\in[1,\frac{π}2]$ is a fixed parameter.
By default the value $λ=\frac{π}2$ is a good choice, however
smaller values may improve the constants. We will not address
this issue here.
\medskip

Using the double-exponential change of variable
\begin{equation}
    \label{eq:de_change}
u=\tanh(λ\sinh(t)),
\end{equation}
the singularities of \eqref{eq:elem_num_int} at $\pm1$ are pushed to infinity and
the integral becomes
\begin{equation*}
    I_{a,b}(i,j) = \int_\R g(t)\dt
\end{equation*}
with
\begin{equation*}
   g(t) = \frac{u(t)^{i-1}}{\ytab(u(t))^j}\frac{λ\cosh(t)}{\cosh(λ\sinh(t))^{2α}}.
\end{equation*}

Let
\begin{equation*}
    Z_r = \set{\tanh(λ\sinh(z)), -r<\Im(z)<r }
\end{equation*}
be the image of the strip of width $2r$ under the change of
variable \eqref{m-eq:de_change}.

Since we can compute the distance of each branch point $u_i$ to
both $[-1,1]$ and its neighborhood $Z_r$ (see \S \ref{m-subsec:de_case}), we obtain
  \begin{lemma}
      There exist explicitly computable
      constants $M_1$, $M_2$ such
      that
      \begin{itemize}
          \item $\abs{\frac{u^{i-1}}{\ytab(u)^{j}}}\leq M_1$ for all $u\in[-1,1]$,
          \item $\abs{\frac{u^{i-1}}{\ytab(u)^{j}}}\leq M_2$ for all $u\in Z_r$.
      \end{itemize}
  \end{lemma}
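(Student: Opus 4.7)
The plan is to exhibit explicit formulas for $M_1$ and $M_2$ in terms of the distances from the transformed branch points $u_k\in U^+\cup U^-$ to the sets $[-1,1]$ and $Z_r$ respectively, using the product decomposition of $\ytab$ given in \eqref{eq:ytab}.

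For the first bound, I would observe that on $[-1,1]$ we trivially have $|u^{i-1}|\leq 1$. For the denominator, using $\ytab(u)=\prod_{u_k\in U^-}\sqrt[m]{u-u_k}\prod_{u_k\in U^+}\sqrt[m]{u_k-u}$, we have
\[
\abs{\ytab(u)^j}=\prod_k \abs{u-u_k}^{j/m}\geq \prod_k\dist(u_k,[-1,1])^{j/m},
\]
so one may take
\[
M_1 = \prod_{u_k\in U^+\cup U^-}\dist(u_k,[-1,1])^{-j/m}.
\]
Note that each distance is strictly positive since by construction all $u_k$ lie off the segment $[-1,1]$, and each factor is easy to compute from the affine positions of the branch points.

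For the second bound, the first step is to check that the parameter $r$ is small enough that $Z_r\subset\epsilon_{a,b}$ and, more precisely, that $Z_r$ avoids all of the $u_k$; this is where we must use the analysis of the double-exponential transform in \S\ref{m-subsec:de_case}, which makes $\dist(u_k,Z_r)$ explicitly computable. Once this is granted, the same factorization argument yields
\[
\abs{\ytab(u)^j}\geq \prod_k\dist(u_k,Z_r)^{j/m}\quad\text{for all } u\in Z_r.
\]
For the numerator, since $Z_r$ is the image of a horizontal strip under the bounded map $u(z)=\tanh(\lambda\sinh z)$, the set $Z_r$ is bounded, and one checks that $|u|\leq R_r$ for an explicit $R_r$ (for instance $R_r=|\tanh(\lambda\sinh(ir))|/\cos(\lambda\sin r)$ or a similar elementary estimate derived from the image of the boundary $\Im z=\pm r$). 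Then
\[
M_2 = R_r^{\,i-1}\prod_{u_k\in U^+\cup U^-}\dist(u_k,Z_r)^{-j/m}
\]
does the job.

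The only nontrivial ingredient is the explicit computation of $\dist(u_k,Z_r)$ and of the bounding constant $R_r$, which reduces to analyzing the conformal geometry of the double-exponential change of variable and is taken up in \S\ref{m-subsec:de_case}; the remainder of the argument is just the triangle inequality and the maximum modulus principle applied to the product form of $\ytab$. Consequently the existence of explicit $M_1, M_2$ follows at once.
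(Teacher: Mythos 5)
Your proposal is correct and matches the paper's intended argument, which is stated only as a one-line remark pointing to \S\ref{m-subsec:de_case}: the product form \eqref{eq:ytab} of $\ytab$ turns the denominator into $\prod_k|u-u_k|^{j/m}$, which is bounded below by the product of the distances from the $u_k$ to $[-1,1]$ (resp.\ $Z_r$, once $r<\min_k r_k$ guarantees $Z_r$ avoids all $u_k$), while $|u^{i-1}|$ is bounded by $1$ on $[-1,1]$ and by an elementary constant on the bounded set $Z_r$. Your reasoning is the same as the paper's, just spelled out.
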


We also introduce the following quantities
\begin{equation*}
    \begin{cases}
    X_r &=\cos(r)\sqrt{\frac{π}{2λ\sin r}-1} \\[0.2cm]
    B(r,α) &=
    \frac{2}{\cos r}
    \left(
        \frac{X_r}2\left(\frac1{\cos(λ\sin r)^{2α}}+\frac1{X_r^{2α}}\right)
        +\frac{1}{2α\sinh(X_r)^{2α}}
    \right).
    \end{cases}
\end{equation*}

Once we have computed the two bounds $M_1$, $M_2$ and the constant $B(r,α)$,
we obtain a rigorous integration scheme as follows:
\begin{thm}
    \label{thm:de_int}
    With notation as above, for all $D>0$, choose $h$ and $N$ such that
    \begin{equation}
    \label{eq:de_parameters}
        \begin{cases}
            h \le \frac{2πr}{D + \log(2M_2 B(r,α) + e^{-D})}\\[0.2cm]
            Nh \ge \asinh\left(\frac{D+\log(\frac{2^{2α+1}M_1}{α})}{2αλ}\right),
        \end{cases}
    \end{equation}
    then
    \begin{equation*}
        \abs{
            I_{a,b}(i,j)
            - h\sum_{k=-N}^N
            w_k \frac{u_k^{i-1}}{\ytab(u_k)^j}
        } \leq e^{-D},
    \end{equation*}
    where
    \begin{equation*}
        \begin{cases}
            u_k = \tanh(λ\sinh(kh)),\\[0.2cm]
            w_k = \frac{λ\cosh(kh)}{\cosh(λ\sinh(kh))^{2α}}.
        \end{cases}
    \end{equation*}
\end{thm}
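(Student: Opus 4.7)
The plan is to split the total error into a discretization part and a truncation part,
\begin{equation*}
\Bigl|I_{a,b}(i,j) - h\sum_{k=-N}^N g(kh)\Bigr|
\le \Bigl|\int_\R g(t)\dt - h\sum_{k\in\Z} g(kh)\Bigr|
+ h\sum_{|k|>N}|g(kh)|,
\end{equation*}
and to choose $h$ and $N$ independently so that each summand is at most $\tfrac12 e^{-D}$; isolating $h$ and $Nh$ in the resulting inequalities will reproduce the two conditions in \eqref{eq:de_parameters}.

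For the discretization error I would invoke the classical trapezoidal-rule estimate on a horizontal strip: since the map $z\mapsto\tanh(λ\sinh z)$ sends $\set{|\Im z|<r}$ into $Z_r$, on which $\ytab$ is analytic and non-vanishing by Proposition~\ref{prop:yab}, the function $g$ is holomorphic there and integrable on both boundaries, hence
\begin{equation*}
\Bigl|\int_\R g(t)\dt - h\sum_{k\in\Z}g(kh)\Bigr|
\le \frac{1}{e^{2\pi r/h}-1}\int_\R\bigl(|g(t+ir)|+|g(t-ir)|\bigr)\dt.
\end{equation*}
By the definition of $M_2$ one already has $|u(z)^{i-1}/\ytab(u(z))^j|\le M_2$ throughout the strip, so it only remains to bound the $L^1$-norms of $|λ\cosh(z)/\cosh(λ\sinh z)^{2α}|$ on the two boundary lines by $B(r,α)$. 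Writing $z=t\pm ir$ and expanding gives
\begin{equation*}
|\cosh(λ\sinh(t\pm ir))|^2 = \cosh\bigl(λ\sinh(t)\cos r\bigr)^2 - \sin\bigl(λ\cosh(t)\sin r\bigr)^2,
\end{equation*}
and one splits the integral at $|t|=X_r$. On $|t|\le X_r$ the possibly oscillatory factor $\sin(λ\cosh(t)\sin r)$ is controlled by elementary concavity bounds, while for $|t|>X_r$ the $\cosh(λ\sinh(t)\cos r)$ term dominates exponentially; the three summands inside $B(r,α)$ are precisely the three contributions obtained in this way.

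For the truncation part, on the real axis $u(t)\in\,]{-1,1}[$, so $|u(t)^{i-1}/\ytab(u(t))^j|\le M_1$. Dominating the tail of the Riemann sum by the corresponding integral (legitimate because $|g|$ is unimodal with decay on either side of $0$) and substituting $v=λ\sinh(t)$ yields
\begin{equation*}
h\sum_{|k|>N}|g(kh)| \le 2M_1\int_{λ\sinh(Nh)}^{\infty}\frac{\d v}{\cosh(v)^{2α}}\le \frac{2^{2α}M_1}{α}\,e^{-2αλ\sinh(Nh)},
\end{equation*}
using $\cosh(v)\ge e^v/2$ in the last step. Requiring the right-hand side to be $\le\tfrac12 e^{-D}$ rearranges precisely to the stated condition $Nh\ge\asinh(\cdots)$.

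The main technical obstacle is the derivation of the constant $B(r,α)$: one must package, in closed form, the three different regimes of the integrand $|\cosh(λ\sinh z)|^{-2α}$ on $\Im z=\pm r$, balancing a near-origin oscillatory regime against the exponential decay at infinity; the role of $X_r$ is precisely to mark the transition between them. All the other ingredients — the strip trapezoidal-rule error, the tail-sum to tail-integral comparison, and the final inversion of the two parameter conditions — are standard in the theory of double-exponential quadrature and essentially follow the template of \cite{Molin2010}.
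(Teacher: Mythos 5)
Your proposal follows the paper's own proof essentially step for step: the paper writes the Poisson summation identity on $h\Z$ (your ``strip trapezoidal-rule estimate'' is the same fact), bounds the truncation tail by the identical integral comparison and $\cosh v \geq e^v/2$ estimate after substituting $v = \lambda\sinh t$, and derives $B(r,\alpha)$ by changing variable to $X = \lambda\sinh(t)\cos r$, cutting at $X = X_r$, and bounding the near piece by a convexity/trapezoid argument on $X^2 + \cos^2 Y$ and the far piece by $\int_{X_r}^\infty \sinh(X)^{-2\alpha}\,\mathrm{d}X$, which is exactly the three-term structure you describe. The only nit is that the cut is at $X(t) = X_r$ in the transformed variable, not at $|t| = X_r$; otherwise this is the same argument.
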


The proof follows the same lines as the one in \cite[Thm. 2.10]{Molin2010}:
we write the Poisson formula on $h\Z$ for the function $g$
\begin{equation*}
    \underbrace{h\sum_{\abs{k}>N}g(kh)}_{e_T}
 + h\sum_{k=-N}^N g(kh)
 = \int_\R g
 + \underbrace{\sum_{k\in\Z^\ast} \hat g\left(\frac{k}{h}\right)}_{e_Q}
\end{equation*}
and control both error terms $e_T$ and $e_Q$ by Lemma \ref{m-lem:de_error_trunc}
and \ref{m-lem:de_error_quad} below. The actual parameters $h$ and $N$ follow
by bounding each error by $e^{-D}/2$.

\begin{lemma}[truncation error]
    \label{lem:de_error_trunc}
    \begin{equation*}
        \sum_{\abs{k}>N}\abs{hg(kh)}
        \leq \frac{2^{2α} M_1}{αλ}\exp(-2αλ\sinh(nh)).
    \end{equation*}
\end{lemma}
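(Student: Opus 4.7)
The strategy is to absorb everything ``curve-dependent'' into the uniform bound $M_1$ and then to estimate a purely analytic tail. Observe that for $t\in\mathbb{R}$ the change of variable $u(t)=\tanh(\lambda\sinh(t))$ takes values in $(-1,1)$, so the first bound of the preceding lemma applies and gives
\begin{equation*}
    |g(t)| \;\le\; M_1\cdot w(t), \qquad w(t)=\frac{\lambda\cosh(t)}{\cosh(\lambda\sinh(t))^{2\alpha}}.
\end{equation*}
Thus the whole question reduces to bounding $h\sum_{|k|>N} w(kh)$, a problem that no longer involves the superelliptic curve at all.

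Next I would compare the discrete tail to an integral. The weight $w(t)$ is even and strictly decreasing for $t$ large enough (beyond the single critical point where $\tanh(t) = 2\alpha\lambda\cosh(t)\sinh(\lambda\sinh(t))/\cosh(\lambda\sinh(t))$, which occurs close to $t=0$ for the parameter ranges considered). Consequently, for $N$ past that threshold,
\begin{equation*}
    h\sum_{|k|>N} w(kh) \;\le\; 2\int_{Nh}^{\infty} w(t)\,dt.
\end{equation*}
The key feature of the double-exponential weight now appears: the substitution $v=\lambda\sinh(t)$, $dv=\lambda\cosh(t)\,dt$, absorbs the $\lambda\cosh(t)$ factor exactly, yielding
\begin{equation*}
    \int_{Nh}^{\infty} w(t)\,dt \;=\; \int_{\lambda\sinh(Nh)}^{\infty} \frac{dv}{\cosh(v)^{2\alpha}}.
\end{equation*}

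The remaining step is the elementary inequality $\cosh(v)\ge e^{v}/2$ valid for $v\ge 0$, which gives $\cosh(v)^{-2\alpha}\le 2^{2\alpha}e^{-2\alpha v}$. Integrating the resulting exponential from $v=\lambda\sinh(Nh)$ to infinity produces
\begin{equation*}
    \int_{\lambda\sinh(Nh)}^{\infty} \frac{dv}{\cosh(v)^{2\alpha}} \;\le\; \frac{2^{2\alpha-1}}{\alpha}\exp\bigl(-2\alpha\lambda\sinh(Nh)\bigr),
\end{equation*}
and combining the factors gives exactly the announced tail bound. The only point one must be careful about is the Riemann-sum-to-integral comparison, i.e.\ verifying monotonicity of $w$ on $[Nh,\infty)$; for any reasonable choice of parameters this is clear, but for a fully rigorous statement one would record that $N$ is at least a small absolute constant (dependent only on $\lambda$) so that $w$ is monotone on the relevant interval. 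The curve-dependent data $\ytab$, $(i,j)$, and the branch points enter only through $M_1$, which matches the uniform factor appearing in the statement.
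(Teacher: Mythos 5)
Your argument follows the paper's proof line by line: bound the integrand by $M_1$ times the weight $w(t)=\frac{\lambda\cosh t}{\cosh(\lambda\sinh t)^{2\alpha}}$, compare the tail of the Riemann sum to the tail integral $2\int_{Nh}^\infty w(t)\,dt$ using monotonicity, absorb $\lambda\cosh t$ by the substitution $v=\lambda\sinh t$, then apply $\cosh v\ge e^v/2$ and integrate. The step where you flag that $w$ is only eventually decreasing (so $N$ must exceed a small absolute threshold depending on $\lambda$ and $\alpha$) is a genuine improvement in rigor; the paper silently asserts that the function is decreasing.

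One small point deserves correction: your chain of inequalities produces $\frac{2^{2\alpha}M_1}{\alpha}\exp(-2\alpha\lambda\sinh(Nh))$, which is what the paper's own proof also yields. The lemma as stated in the paper has an additional factor $1/\lambda$ in the denominator that neither your derivation nor the paper's establishes (and, since $\lambda\ge 1$, the stated bound is actually the \emph{stronger} inequality); this appears to be a typo in the lemma's statement rather than an error on your part, but you should not claim that your computation reproduces ``exactly the announced tail bound'' when it reproduces the proof's bound, not the statement's. Likewise the exponent in the statement reads $\sinh(nh)$ rather than $\sinh(Nh)$, again a misprint.
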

\begin{proof}
    We bound the sum by the integral of a decreasing function
    \begin{align*}
        \sum_{\abs{k}>N}\abs{hg(kh)}
        &\leq2M_1\int_{Nh}^\infty\frac{λ\cosh(t)}{\cosh(λ\sinh(t))^{2α}}
        =2M_1\int_{λ\sinh(Nh)}^\infty\frac{\dt}{\cosh(t)^{2α}}\\
        &\leq 2^{2α+1} M_1\int_{λ\sinh(Nh)}^\infty e^{-2αt}\dt
        = \frac{2^{2α} M_1}{α}e^{-2αλ\sinh(Nh)}.
    \end{align*}
\end{proof}

\begin{lemma}[discretization error]
    \label{lem:de_error_quad}
    With the current notations,
    \begin{equation*}
        \sum_{k\neq0}\abs{\hat g\left(\frac kh\right)}
        \leq
        \frac{M_2B(r,α)}{e^{2πr/h}-1}.
    \end{equation*}
\end{lemma}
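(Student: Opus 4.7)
The plan is to follow the standard double-exponential error analysis: Poisson summation combined with a Cauchy contour shift to the boundary of the analyticity strip, and an explicit estimate of the resulting $L^1$ integral of $g$ on the shifted contour. First I would verify that $g$ extends holomorphically to the open strip $S_r=\{z\in\C:|\Im z|<r\}$. The DE change of variable $z\mapsto u(z)=\tanh(λ\sinh z)$ maps $S_r$ conformally into $Z_r$ by definition, and by the choice of $r$ (made so that no branch point $u_k$ lies in $\overline{Z_r}$) the algebraic factor $u^{i-1}/\ytab(u)^j$ is holomorphic on $Z_r$ with modulus at most $M_2$. The cosh/sinh weight is holomorphic on $S_r$ because $λ\sinh z$ avoids the zeros $i(π/2+π\Z)$ of $\cosh$ when $|\Im z|<r<π/2$ and $λ\leq π/2$, and it decays double-exponentially as $\Re z\to\pm\infty$ uniformly on $S_r$.

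Given this, for each non-zero $\xi=k/h$ I apply Cauchy's theorem to shift the contour of $\hat g(\xi)=\int_\R g(t)e^{-2πi\xi t}\,\dt$ to $\R-ir\,\operatorname{sgn}(\xi)$. The vertical contributions vanish by the decay of $g$, giving $|\hat g(\xi)|\leq e^{-2πr|\xi|}\int_\R|g(t-ir\,\operatorname{sgn}(\xi))|\,\dt$. Since $Z_r$ is symmetric under complex conjugation and the weight satisfies $|w(\bar z)|=|w(z)|$, the two horizontal contours give the same $L^1$-bound, and summing the geometric series yields
\begin{equation*}
\sum_{k\neq0}\left|\hat g(k/h)\right|\leq\frac{2\int_\R|g(t+ir)|\,\dt}{e^{2πr/h}-1}.
\end{equation*}
It therefore suffices to prove $2\int_\R|g(t+ir)|\,\dt\leq M_2 B(r,α)$.

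This last step is the technical core. After bounding the algebraic factor by $M_2$, the task reduces to estimating $2\int_\R λ|\cosh(t+ir)|/|\cosh(λ\sinh(t+ir))|^{2α}\,\dt$. Expanding $\sinh(t+ir)=\sinh t\cos r+i\cosh t\sin r$ gives the identity $|\cosh(λ\sinh(t+ir))|^2=\sinh^2(λ\sinh t\cos r)+\cos^2(λ\cosh t\sin r)$. The threshold $X_r=\cos r\sqrt{π/(2λ\sin r)-1}$ is engineered so that for $|\sinh t|\cos r\leq X_r$ the cosine-factor provides the uniform lower bound $\max(\cos(λ\sin r),\sinh(λ|\sinh t|\cos r))$, while for $|\sinh t|\cos r>X_r$ the sinh-factor dominates and yields $|\cosh(λ\sinh(t+ir))|\geq\sinh(λ|\sinh t|\cos r)$. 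Splitting the integration at $\pm X_r/\cos r$ in $\sinh t$ and applying each lower bound separately produces three pieces: an inner contribution controlled by $\cos(λ\sin r)^{-2α}$, a middle slice controlled by $X_r^{-2α}$, and a tail dominated by $\int_{X_r}^\infty du/\sinh(u)^{2α}\leq 1/(2α\sinh(X_r)^{2α})$. The overall prefactor $2/\cos r$ arises from combining the symmetric halves $t\gtrless 0$ with the change of variable $s=λ\sinh t\cos r$ (so that $λ\cosh t\,\dt = ds/\cos r$). The main obstacle is the bookkeeping of this splitting and matching each of the three resulting terms with the corresponding summand of $B(r,α)$; the upstream Poisson and contour-shift steps are routine once holomorphicity and decay have been established.
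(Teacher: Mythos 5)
Your Poisson/contour-shift framework is the same as the paper's and is fine: the shift to $\Im z=\mp r$, the reflection symmetry giving a factor of $2$, the geometric series $\sum_{k\ge1}e^{-2\pi rk/h}=1/(e^{2\pi r/h}-1)$, bounding the algebraic factor by $M_2$ on $Z_r$, the identity $\abs{\cosh(\lambda\sinh(t+ir))}^2=\sinh^2X+\cos^2Y$ with $X=\lambda\sinh t\cos r$, $Y=\lambda\cosh t\sin r$, the bound $\abs{\lambda\cosh(t+ir)}\le\lambda\cosh t=X'(t)/\cos r$ giving the prefactor $2/\cos r$, the role of $X_r$ as the abscissa at which $Y$ reaches $\pi/2$, and the tail estimate $\int_{X_r}^\infty\sinh(X)^{-2\alpha}\,\d X\le(2\alpha\sinh(X_r)^{2\alpha})^{-1}$ all agree with the paper.

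The gap is in your treatment of the portion $0\le X\le X_r$. You claim a ``three-piece'' splitting where an inner region is ``controlled by $\cos(\lambda\sin r)^{-2\alpha}$'' and a middle slice by $X_r^{-2\alpha}$, but no such decomposition exists: on $(0,X_r)$ the quantity $\cos Y$ is strictly decreasing from $\cos(\lambda\sin r)$ to $0$ and $\sinh X$ is strictly increasing from $0$, so there is no subinterval of positive length on which either $\cos Y\ge\cos(\lambda\sin r)$ or $\sinh X\ge X_r$ holds, and a pointwise bound $\abs{\cosh}\ge\max(\cos(\lambda\sin r),\sinh X)$ is simply false in the interior. The two terms $\frac{X_r}{2}\left(\cos(\lambda\sin r)^{-2\alpha}+X_r^{-2\alpha}\right)$ in $B(r,\alpha)$ do not arise from two separate pieces; they form a single trapezoid bound on a single integral. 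The paper's actual route is: (a) use $\sinh X\ge X$ to pass to $\int_0^{X_r}(X^2+\cos^2Y)^{-\alpha}\,\d X$; (b) observe that $Y(X)$ is convex and $\cos$ is concave decreasing on $[Y_0,\pi/2]$, so $\cos Y(X)\ge\cos(Y_0)(1-X/X_r)$, giving a lower bound $X^2+\cos^2Y\ge P_2(X)$ with $P_2$ an explicit convex quadratic satisfying $P_2(0)=\cos^2(\lambda\sin r)$ and $P_2(X_r)=X_r^2$; (c) apply the trapezoid rule to $\int_0^{X_r}P_2(X)^{-\alpha}\,\d X$. This linearization-by-convexity step (b), which is what manufactures the endpoint values appearing in $B(r,\alpha)$, is entirely absent from your proposal and cannot be replaced by the naive domain splitting you describe.
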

\begin{proof}
We first bound the Fourier transform by a shift of contour
\begin{equation*}
    \forall X>0, \hat g(\pm X) = e^{-2πXr} \int_{\R} g(t\mp ir) e^{-2iπtX}\dt
\end{equation*}
so that
\begin{equation*}
    \sum_k \abs{\hat g\left(\frac kh\right)}
    \leq
    \frac{2M_2}{e^{2πr/h}-1}\int_\R \abs{
    \frac{λ\cosh(t+ir)}{\cosh(λ\sinh(t+ir))^{2α}}}\dt.
\end{equation*}

Now the point $λ\sinh(t+ir) = X(t)+iY(t)$ lies on the hyperbola
$Y^2 =λ^2(\sin^2r+\tan^2 rX^2)$, and
\begin{equation*}
    \begin{cases}
    \abs{λ\cosh(t+ir)} &\leq λ\cosh(t) =\frac{X'(t)}{\cos(r)}\\[0.2cm]
    \abs{\cosh(X+iY)}^2 &= \sinh(X)^2+\cos(Y)^2,
    \end{cases}
\end{equation*}
so that
\begin{equation*}
    \int_\R \abs{
    \frac{λ\cosh(t+ir)}{\cosh(λ\sinh(t+ir))^{2α}}}\dt
    \leq
    \frac{2}{\cos r}\int_0^\infty\frac{\d X}{(\sinh(X)^2+\cos(Y)^2)^α}.
\end{equation*}
For $X_0=0$ we get $Y_0=λ\sin r<\frac{π}2$, and $Y_r=\frac{π}2$ for
$X_r=\cos(r)\sqrt{\frac{π}{2Y_0}-1}$.

  We cut the integral at $X=X_r$ and write
  \begin{align*}
      \int_0^{X_r}\frac{\d X}{(\sinh(X)^2+\cos(Y)^2)^α}
      & \leq \int_0^{X_r}\frac{\d X}{(X^2+\cos^2Y)^α} \\
      \int_{X_r}^\infty\frac{\d X}{(\sinh(X)^2+\cos(Y)^2)^α}
      & \leq \int_{X_r}^\infty\frac{\d X}{(\sinh X)^{2α}}.
  \end{align*}

  We bound the first integral by convexity:
  since $Y(X)$ is convex and $\cos$ is concave decreasing for $Y\leq Y_r$ we
  obtain by concavity of the composition
  \begin{equation*}
      \forall X\leq X_r, \cos(Y)\geq \cos(Y_0)\left(1-\frac{X}{X_r}\right).
  \end{equation*}
  Now $X^2+\cos^2Y\geq P_2(X)$ where
  \begin{equation*}
     P_2(X) = \left(1+\frac{\cos^2(Y_0)}{X_r^2}\right)X^2 -2\frac{\cos^2(Y_0)}{X_r}X+\cos^2(Y_0)
  \end{equation*}
  is a convex quadratic, so $X\mapsto P_2(X)^{-α}$ is still convex and the integral
  is bounded by a trapezoid
  \begin{equation*}
  \int_0^{X_r}\frac{\d X}{P_2(X)^α}\leq \frac{X_r}2\left(P_2(0)^{-α}+P_2(X_r)^{-α}\right)
      = \frac{X_r}2\left(\frac1{\cos(Y_0)^{2α}}+\frac1{X_r^{2α}}\right).
  \end{equation*}

  For the second integral we use
  $\sinh(X)\geq\sinh(X_r)e^{X-X_r}$ to obtain
  \begin{equation*}
      \int_{X_r}^\infty \frac{\d X}{\sinh(X)^{2α}} \leq \frac1{2α\sinh(X_r)^{2α}}.
  \end{equation*}
\end{proof}

\subsection{Gauss-Chebychev integration}
\label{m-subsec:gauss_chebychev_integration}

In the case of hyperelliptic curves, we have $α=\frac12$ (and $j=1$) and the integral
\begin{equation*}
    \int_{-1}^1\frac{\varphi_{i,1}(u)}{\sqrt{1-u^2}}\du
\end{equation*}
can be efficiently handled by Gaussian integration with weight
$1/\sqrt{1-u^2}$,
for which the corresponding orthogonal polynomials are
Chebychev polynomials.

In this case, the integration formula is particularly
simple: there is no need to actually compute the Chebychev polynomials
since their roots are explicitly given as cosine functions \cite[25.4.38]{AbramowitzStegun}.
\begin{thm}[Gauss-Chebychev integration]
    Let $g$ be holomorphic around $[-1,1]$. Then for all
    $N$, there exists $\xi \in ]-1,1[$ such that
    \begin{equation}
        \label{eq:gauss_chebychev}
        \int_{-1}^1\frac{g(u)}{\sqrt{1-u^2}}\du
        - \sum_{k=1}^N w_k g(u_k)
        = \frac{π2^{2N+1}}{2^{4N}}\frac{g^{(2N)}(\xi)}{(2N)!}
     = E(N),
    \end{equation}
    with constant weights $w_k = w =\frac{π}N$
    and nodes $u_k = \cos\left(\frac{2k-1}{2N}π\right)$.
\end{thm}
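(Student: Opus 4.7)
My plan is to prove this as a special case of the general Gauss quadrature error formula, specialized to the Chebychev weight $w(u)=1/\sqrt{1-u^2}$. The nodes $u_k=\cos((2k-1)\pi/(2N))$ are precisely the roots of the Chebychev polynomial of the first kind $T_N(u)=\cos(N\arccos u)$, which is orthogonal to all lower-degree polynomials against $w$. By the classical argument (write a polynomial $P$ of degree $\leq 2N-1$ as $P=T_N Q+R$ with $\deg Q,\deg R<N$; the $T_N Q$ term integrates to $0$ by orthogonality and vanishes at the nodes, while $R$ is recovered exactly by Lagrange interpolation), the quadrature with weights $w_k=\int_{-1}^1 L_k(u)/\sqrt{1-u^2}\,du$ where $L_k$ is the $k$-th Lagrange basis polynomial is exact on polynomials of degree $\leq 2N-1$. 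To compute $w_k$ explicitly I would use the substitution $u=\cos\theta$, which turns the integral into $\int_0^\pi g(\cos\theta)\,d\theta$ and the nodes $u_k$ into the equispaced $\theta_k=(2k-1)\pi/(2N)$; expanding $L_k(\cos\theta)$ in the orthonormal basis $\{1,\sqrt{2}\cos(j\theta)\}$ and using the Dirichlet-kernel identity $\sum_{j=0}^{N-1}\cos(j(\theta-\theta_k))+\cos(j(\theta+\theta_k))/2 \cdot \dots$ yields the constant value $w_k=\pi/N$.

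For the error term on a general $g$ holomorphic in a neighborhood of $[-1,1]$, I would invoke the Hermite interpolation trick. Let $H$ be the unique polynomial of degree $\leq 2N-1$ with $H(u_k)=g(u_k)$ and $H'(u_k)=g'(u_k)$ for $k=1,\dots,N$. The standard interpolation remainder gives
\begin{equation*}
g(u)-H(u)=\frac{g^{(2N)}(\eta(u))}{(2N)!}\prod_{k=1}^N(u-u_k)^2
\end{equation*}
for some $\eta(u)\in(-1,1)$. Since the quadrature is exact on $H$ and $H$ agrees with $g$ at the nodes, the quadrature error on $g$ equals $\int_{-1}^1(g-H)/\sqrt{1-u^2}\,du$. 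The integrand has the nonnegative factor $\prod(u-u_k)^2/\sqrt{1-u^2}$, so the integral mean value theorem pulls $g^{(2N)}/(2N)!$ out at a single point $\xi\in(-1,1)$:
\begin{equation*}
\int_{-1}^1\frac{g(u)}{\sqrt{1-u^2}}\,du-\sum_{k=1}^N w_k g(u_k)=\frac{g^{(2N)}(\xi)}{(2N)!}\int_{-1}^1\frac{\prod_{k=1}^N(u-u_k)^2}{\sqrt{1-u^2}}\,du.
\end{equation*}

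The final constant follows from the normalization of $T_N$: its leading coefficient is $2^{N-1}$, so $\prod_{k=1}^N(u-u_k)=T_N(u)/2^{N-1}$, whence
\begin{equation*}
\int_{-1}^1\frac{\prod_{k=1}^N(u-u_k)^2}{\sqrt{1-u^2}}\,du=\frac{1}{4^{N-1}}\int_{-1}^1\frac{T_N(u)^2}{\sqrt{1-u^2}}\,du=\frac{1}{4^{N-1}}\cdot\frac{\pi}{2}=\frac{\pi\cdot 2^{2N+1}}{2^{4N}},
\end{equation*}
using the well-known $L^2$-norm $\int_{-1}^1 T_N^2/\sqrt{1-u^2}\,du=\pi/2$ for $N\geq 1$ (via $u=\cos\theta$ and $\int_0^\pi\cos^2(N\theta)\,d\theta=\pi/2$). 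The only real obstacle is bookkeeping: making sure the Hermite remainder formula is applicable (which requires only $g\in C^{2N}$, certainly implied by holomorphy), and correctly tracking the leading coefficient of $T_N$ to land on $2^{2N+1}/2^{4N}$ rather than off by a power of $2$. The weight computation $w_k=\pi/N$ is a clean Dirichlet-kernel identity but must be verified carefully since the theorem states it as part of its conclusion.
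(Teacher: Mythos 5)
The paper does not actually prove this theorem: it is stated as the classical Gauss--Chebychev quadrature rule and cited directly from Abramowitz--Stegun (formula 25.4.38), with error analysis further deferred to Chawla--Jain. So there is no proof in the paper to compare against; your job was effectively to supply a proof from scratch, and you have done so correctly along entirely standard lines.

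Your argument is sound. The reduction to exactness on polynomials of degree at most $2N-1$ via the division $P = T_N Q + R$ plus orthogonality of $T_N$ is the canonical Gaussian-quadrature argument; the Hermite interpolation remainder combined with the sign-definiteness of $\prod(u-u_k)^2/\sqrt{1-u^2}$ and the integral mean value theorem is likewise the textbook route to the error term with a single evaluation point $\xi$. The constant checks out: $\prod(u-u_k) = T_N(u)/2^{N-1}$ for $N\ge 1$, and $\int_{-1}^1 T_N^2/\sqrt{1-u^2}\,\mathrm{d}u = \pi/2$ gives $\frac{1}{4^{N-1}}\cdot\frac{\pi}{2} = \frac{2\pi}{2^{2N}} = \frac{\pi\,2^{2N+1}}{2^{4N}}$, matching the displayed formula. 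The only part you leave under-specified is the Dirichlet-kernel computation of $w_k = \pi/N$; the clean way to finish it is to substitute $u=\cos\theta$, note that a polynomial of degree $\le N-1$ becomes a cosine polynomial $\sum_{j=0}^{N-1}c_j\cos(j\theta)$, and use $\sum_{k=1}^N\cos(j\theta_k)=0$ for $1\le j\le N-1$ (and $=N$ for $j=0$) with $\theta_k = (2k-1)\pi/(2N)$, so that both $\int_0^\pi(\cdot)\,\mathrm{d}\theta$ and $\frac{\pi}{N}\sum_k(\cdot)$ pick out $\pi c_0$. Spelling that out would close the one remaining gap, but the proposal is correct as an outline.
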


Moreover, very nice estimates on the error $E(N)$ can by obtained by applying
the residue theorem on an ellipse $ε_r$ of the form
\begin{equation*}
    ε_r = \set{z, \abs{z-1}+\abs{z+1} = 2\cosh(r) }.
\end{equation*}

  \begin{figure}[H] \begin{center}
      \includegraphics[width=5cm,page=1]{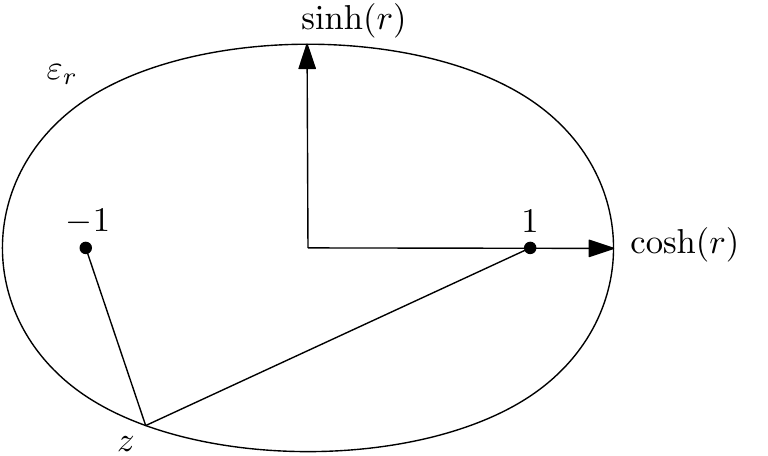}
  \end{center} \caption{ellipse parameters.}
  \label{fig:ellipse1} \end{figure}

\begin{thm}[\cite{ChawlaJain68},Theorem 5]
    Let $r>0$ such that $g$ is holomorphic on $ε_r$. Then
    the error in \eqref{m-eq:gauss_chebychev} satisfies
    \begin{equation*}
        \abs{E(N)}\leq \frac{2πM(r)}{e^{2rN}-1}
    \end{equation*}
    where $M(r)=\max\set{\abs{f(z)},z\in ε_r}$.
\end{thm}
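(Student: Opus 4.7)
The plan is to reduce the quadrature and integral to a Fourier cosine series via $u=\cos\theta$, compute the error exactly in terms of Chebychev coefficients, and then bound those coefficients by a contour shift onto the ellipse $\varepsilon_r$.

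First I would perform the substitution $u=\cos\theta$, which transforms $\int_{-1}^1 g(u)/\sqrt{1-u^2}\,\du$ into $\int_0^\pi g(\cos\theta)\,\d\theta$, and the quadrature sum into the midpoint rule $\frac{\pi}{N}\sum_{k=1}^N g(\cos\theta_k)$ with nodes $\theta_k = (2k-1)\pi/(2N)$. Since $g$ is holomorphic on a neighbourhood of $[-1,1]$, the even $2\pi$-periodic function $g(\cos\theta)$ has a uniformly convergent Fourier cosine expansion $g(\cos\theta) = \frac{c_0}{2} + \sum_{n\geq 1} c_n\cos(n\theta)$ (equivalently, the Chebychev expansion of $g$ since $T_n(\cos\theta)=\cos(n\theta)$), so by linearity it suffices to evaluate both the exact integral and the quadrature on each basis function $\cos(n\theta)$.

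Next, I would apply the trigonometric identity $\sum_{k=1}^N \cos((2k-1)x) = \sin(2Nx)/(2\sin x)$ with $x = n\pi/(2N)$. For integer $n\geq 1$, $\sin(n\pi)=0$, so the quadrature vanishes on $\cos(n\theta)$ except when the denominator also vanishes, i.e.\ when $n=2Nq$ for some $q\geq 1$; in that degenerate case a direct computation gives $\sum_{k=1}^N \cos((2k-1)q\pi)=N(-1)^q$. Combined with $\int_0^\pi \cos(n\theta)\,\d\theta = 0$ for $n\geq 1$, the error collapses to a lacunary series
\begin{equation*}
    E(N) \;=\; -\pi\sum_{q=1}^\infty (-1)^q\, c_{2Nq}.
\end{equation*}

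The key analytic step is the decay estimate $|c_n|\leq 2M(r)e^{-rn}$. For this I would parametrize through the Joukowski map $z=\frac{1}{2}(w+w^{-1})$, which carries $|w|=1$ onto $[-1,1]$ and both $|w|=e^r$ and $|w|=e^{-r}$ bijectively onto $\varepsilon_r$ (the semi-axes $\cosh r$, $\sinh r$ agree). Writing $T_n(z) = \frac{1}{2}(w^n+w^{-n})$ gives the contour representation
\begin{equation*}
    c_n \;=\; \frac{1}{2\pi i}\oint_{|w|=1} g\!\left(\tfrac{w+w^{-1}}{2}\right)\bigl(w^{n-1}+w^{-n-1}\bigr)\,\d w.
\end{equation*}
Because $g((w+w^{-1})/2)$ is holomorphic on the annulus $e^{-r}<|w|<e^r$ (its image sits inside $\varepsilon_r$), I can deform the $w^{n-1}$ piece inward to $|w|=e^{-r}$ and the $w^{-n-1}$ piece outward to $|w|=e^r$; since $|g|\leq M(r)$ on both contours, each piece contributes at most $M(r)e^{-rn}$, giving $|c_n|\leq 2M(r)e^{-rn}$. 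Summing the geometric series yields
\begin{equation*}
    |E(N)| \;\leq\; 2\pi M(r)\sum_{q\geq 1} e^{-2rNq} \;=\; \frac{2\pi M(r)}{e^{2rN}-1},
\end{equation*}
as claimed. The one delicate point is the coefficient bound: the two terms $w^{n-1}$ and $w^{-n-1}$ must be deformed in opposite directions because $w=0$ is an isolated singularity only of the second, and it is essential that both $|w|=e^{\pm r}$ land on the same ellipse where $g$ is controlled by $M(r)$. Once this is set up, the trigonometric identity and the geometric sum are routine.
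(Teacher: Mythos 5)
The paper does not actually prove this statement; it cites it verbatim from Chawla--Jain (1968, Theorem 5), so there is no in-paper argument to compare against. Your blind proof is correct and supplies a clean, self-contained derivation. Each step checks out: the substitution $u=\cos\theta$ converts the rule into the $N$-point midpoint rule on $[0,\pi]$; the identity $\sum_{k=1}^N\cos\bigl((2k-1)\tfrac{n\pi}{2N}\bigr)$ vanishes for $n\ge1$ unless $2N\mid n$, where it equals $N(-1)^q$ with $n=2Nq$, so the error collapses to the aliasing series $E(N)=-\pi\sum_{q\ge1}(-1)^q c_{2Nq}$; the Chebychev-coefficient bound $\abs{c_n}\le 2M(r)e^{-rn}$ follows from pushing the Joukowski contour to $\abs w=e^{\mp r}$ for the $w^{n-1}$ and $w^{-n-1}$ pieces respectively, both of which map onto $\varepsilon_r$; and the geometric sum gives exactly $2\pi M(r)/(e^{2rN}-1)$.

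One small expository remark: you write that the two pieces ``must'' be deformed in opposite directions because $w=0$ is a singularity only of $w^{-n-1}$. In fact both integrands are holomorphic throughout the annulus $e^{-r}<\abs w<e^r$, so either piece may legally be deformed to either boundary circle; the opposite choices are made because they minimize the resulting bound ($e^{-r(n-1)}$ on the small circle for $w^{n-1}$, $e^{-r(n+1)}$ on the large circle for $w^{-n-1}$), not because the alternative deformation is forbidden. The singularity at $w=0$ only becomes relevant if one tries to shrink the $w^{-n-1}$ contour past $\abs w=e^{-r}$, which you never need. This does not affect the correctness of the argument.
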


Now we use this theorem with a function
$g_{i}(u)=\frac{u^i}{\sqrt{Q(u)}}$ for an explicitly factored
polynomial $Q(u)=\prod(u-u_k)$, so that the error can be
explicitly controlled.

\begin{lemma}
    \label{lem:param_r}
    Let $r>0$ be such that $2\cosh(r)<\abs{u_k-1}+\abs{u_k+1}$ for all
    roots $u_k$ of $Q$,
    then there exists an explicitly computable
    constant $M(r)$ such that for all $u\in ε_r$
    \begin{equation*}
        \abs{\frac{u^{i-1}}{\ytab(u)}}\leq M(r).
    \end{equation*}
\end{lemma}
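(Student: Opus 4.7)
The plan is to bound $|u|$ from above and $|\ytab(u)|$ from below, uniformly on the ellipse $\epsilon_r$, using only the geometric fact that $\epsilon_r$ is a level set of the sum $|z-1|+|z+1|$.

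First I would record that the ellipse $\epsilon_r$ has semi-major axis $\cosh(r)$, so that $|u|\leq\cosh(r)$ for every $u\in\epsilon_r$, giving the trivial numerator bound $|u^{i-1}|\leq\cosh(r)^{i-1}$. The work is in the denominator. Since we are in the hyperelliptic setting with $\alpha=1/2$, after factoring out $\sqrt{1-u^2}$ we have $\ytab(u)^{2}=\prod_k \pm(u-u_k)$, so
\begin{equation*}
    \frac{1}{|\ytab(u)|} = \prod_k |u-u_k|^{-1/2}.
\end{equation*}
Thus it suffices to produce, for each root $u_k$ of $Q$, an explicit positive lower bound on $|u-u_k|$ valid for all $u\in\epsilon_r$.

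This is the key step, and it follows from a one-line triangle inequality. For $u\in\epsilon_r$ and any $u_k$,
\begin{equation*}
    |u_k-1|+|u_k+1|\;\leq\; (|u_k-u|+|u-1|)+(|u_k-u|+|u+1|)\;=\;2|u-u_k|+2\cosh(r),
\end{equation*}
which rearranges to
\begin{equation*}
    |u-u_k|\;\geq\;\frac{|u_k-1|+|u_k+1|-2\cosh(r)}{2}\;=:\;d_k.
\end{equation*}
The hypothesis $2\cosh(r)<|u_k-1|+|u_k+1|$ is precisely the statement that each $d_k>0$, so the denominator never vanishes on $\epsilon_r$ and the bounds are genuine.

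Combining everything, I would set
\begin{equation*}
    M(r)\;=\;\frac{\cosh(r)^{\,i-1}}{\prod_k d_k^{\,1/2}},
\end{equation*}
which is manifestly explicit since the $u_k$ and $r$ are known. The only point requiring any care is ensuring that the same factorisation of $\ytab$ used throughout Section~\ref{m-subsec:roots_branches} applies here (the branch cuts of $\ytab$ lie outside $\epsilon_r$ by construction of the parameter $r$, so $\ytab$ is holomorphic on the ellipse and the modulus identity above is valid). I do not foresee a genuine obstacle: the lemma is essentially a packaging of the triangle inequality with the geometric characterisation of $\epsilon_r$.
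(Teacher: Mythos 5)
Your proof is correct and takes essentially the same route as the paper's: the paper also bounds $|u|\le\cosh(r)$ on $\varepsilon_r$ and bounds each $|u-u_k|$ from below via the triangle inequality $d_r(u_k)\ge\cosh(r_k)-\cosh(r)$ where $2\cosh(r_k)=|u_k-1|+|u_k+1|$, which is exactly your $d_k$. The resulting constant $M(r)=\cosh(r)^{i-1}/\sqrt{\prod_k d_k}$ is identical.
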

\begin{proof}
We simply compute the distance
        $d_r(u_k) = \inf_{z\in ε_r}\abs{z-u_k}$
 from a root $u_k$ to the ellipse $ε_r$, and let
 $M(r) =  \frac{\cosh(r)^{i-1}}{\sqrt{\prod d_r(u_k)} }$.
 For simplicity, we can use the triangle inequality
 $d_r(u_k)\geq \cosh(r_k)-\cosh(r)$, where
 $2\cosh(r_k)=\abs{u_k-1}+\abs{u_k+1}$.
\end{proof}

\begin{thm}
    \label{thm:gc_int}
    With $r$ and $M(r)$ satisfying Lemma \ref{m-lem:param_r},
    for all $N$ such that
    \begin{equation*}
        \label{eq:Ngc}
        N \geq \frac{D+\log(2πM(r))+1}{2r},
    \end{equation*}
    we have
    \begin{equation*}
        \abs{I_{a,b}(i,1)
        - \frac{π}N\sum_{k=1}^N\frac{u_k^{i-1}}{\ytab(u_k)}}
            \leq e^{-D},
    \end{equation*}
    where $u_k=\cos\left(\frac{2k-1}{2N}π\right)$.
\end{thm}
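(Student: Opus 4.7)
The plan is to apply the Chawla--Jain error estimate to the explicit integrand that appears in the hyperelliptic case. When $j=1$, the elementary integral takes the form
\begin{equation*}
    I_{a,b}(i,1) = \int_{-1}^{1} \frac{g_i(u)}{\sqrt{1-u^2}}\,\du
    \quad\text{with}\quad
    g_i(u) = \frac{u^{i-1}}{\ytab(u)},
\end{equation*}
so Gauss--Chebychev integration with $N$ nodes yields exactly the sum appearing in the statement, with nodes $u_k = \cos\!\left(\frac{2k-1}{2N}\pi\right)$ and constant weight $\pi/N$. To invoke the Chawla--Jain bound we need $g_i$ to be holomorphic on the ellipse $\varepsilon_r$ and a uniform bound on it.

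First I would verify the holomorphy: by the choice of $r$ in Lemma \ref{m-lem:param_r}, every root $u_k$ of the polynomial $Q$ that gives $\ytab$ satisfies $|u_k-1|+|u_k+1|>2\cosh r$, so no root lies on or inside $\varepsilon_r$. Combined with Proposition \ref{m-prop:yab}, which asserts that $\ytab$ is holomorphic and non-vanishing on a neighbourhood of $[-1,1]$ containing $\varepsilon_r$, we conclude that $g_i$ is holomorphic on $\varepsilon_r$. Lemma \ref{m-lem:param_r} then provides the uniform bound $|g_i(u)|\le M(r)$ on $\varepsilon_r$.

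Next, the Chawla--Jain theorem immediately gives
\begin{equation*}
    \abs{I_{a,b}(i,1) - \frac{\pi}{N}\sum_{k=1}^{N}\frac{u_k^{i-1}}{\ytab(u_k)}}
    = \abs{E(N)} \leq \frac{2\pi M(r)}{e^{2rN}-1}.
\end{equation*}
It remains to convert this into an explicit lower bound on $N$ that guarantees an error of at most $e^{-D}$. The condition $\frac{2\pi M(r)}{e^{2rN}-1}\le e^{-D}$ is equivalent to $e^{2rN}\ge 1+2\pi M(r)e^{D}$, and a crude but convenient sufficient condition is $e^{2rN}\ge e\cdot 2\pi M(r)e^{D}$, i.e.
\begin{equation*}
    2rN \geq D + \log\bigl(2\pi M(r)\bigr) + 1,
\end{equation*}
which is precisely the hypothesis of the theorem.

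There is no genuine obstacle here: the only points requiring care are the verification that $\varepsilon_r$ avoids the singularities of $\ytab$ (already encoded in the condition of Lemma \ref{m-lem:param_r}) and the transition from the Chawla--Jain geometric decay $1/(e^{2rN}-1)$ to the clean logarithmic lower bound on $N$, where the ``$+1$'' absorbs the ``$-1$'' in the denominator.
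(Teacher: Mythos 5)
Your proof is correct and takes the natural (and, implicitly, the paper's) route: instantiate the Chawla--Jain bound with the function $g_i(u)=u^{i-1}/\ytab(u)$, use Lemma 6.6 to certify holomorphy of $g_i$ on $\varepsilon_r$ and to supply the bound $M(r)$, and then solve $\frac{2\pi M(r)}{e^{2rN}-1}\le e^{-D}$ for $N$. The only point worth flagging is that your ``crude but convenient sufficient condition'' $e^{2rN}\ge e\cdot 2\pi M(r)e^{D}$ implies $e^{2rN}\ge 1+2\pi M(r)e^{D}$ only when $(e-1)\,2\pi M(r)e^{D}\ge 1$; this is harmless in practice (the paper computes the exact threshold $N\ge\frac{D+\log(2\pi M(r)+e^{-D})}{2r}$ in \S 8.3.1, and the theorem's $+1$ is exactly the simplification you describe), but strictly speaking the cleaner way to close the gap is to observe $e^{2rN}-1\ge e^{2rN-1}$ once $2rN\ge\log\!\big(\tfrac{e}{e-1}\big)$, which is subsumed by the stated lower bound on $N$ whenever $D$ is not pathologically small.
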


More details on the choice of $r$ and the computation of $M(r)$
are given in \S \ref{m-par:gc_int_r}.

  \section{Computing the Abel-Jacobi map}\label{sec:comp_ajm}

   Here we are concerned with explicitly computing the Abel-Jacobi map of degree zero divisors; for a general introduction see Section \ref{m-sec:ajm}.

   Assume for this section that we have already computed a big period period matrix (and all related data) following the Strategy from Section \ref{m-sec:strat_pm}.

   Let $D = \sum_{P \in \cu} v_P P \in \Div^0(\cu)$. After choosing a basepoint $P_0 \in \cu$, the computation of $\AJ$ reduces (using linearity) to
   \begin{equation*}
     \AJ([D]) \equiv \sum_{P \in \cu} v_P \int_{P_0}^P \bar\w \mod \Lambda.
   \end{equation*}
  For every $P \in \cu$, $\int_{P_0}^P \bar\w$ is a linear combination of vector integrals of the form
  \begin{align*}
    \int_{P_0}^{P_k} \bar\w\quad \text{(see \S \ref{m-subsec:ajm_ram_pts}),} \quad
    \int_{P_k}^{P} \bar\w \quad \text{(see \S \ref{m-subsec:ajm_finite})}
    \quad \text{and} \quad \int_{P_0}^{P_{\infty}} \bar\w\quad \text{(see \S \ref{m-subsec:ajm_infty}),} \quad \text{where}
  \end{align*}
  \begin{itemize}
   \item $\bar\w$ is the vector of differentials in $\W$,
   \item $P = (x_P,y_P) \in \cu$ is a finite point on the curve,
   \item $P_k = (x_k,0) \in \cu$ is a finite ramification point, i.e.\ $x_k \in X$, and
   \item $P_{\infty} \in \cu$ is an infinite point.
  \end{itemize}

   Typically, we choose as basepoint the ramification point $P_0 = (x_0,0)$, where $x_0 \in X$ is the root of the spanning tree $G = (X,E)$.

  Finally, the resulting vector integral has to be reduced modulo the period lattice $\Lambda$, which is covered in \S \ref{m-subsec:lat_red}.

   \begin{rmk}[Image of Abel-Jacobi map] \
  For practical reasons, we will compute the
  image of the Abel-Jacobi map in the canonical torus $\R^{2g} / \Z^{2g}$.
This representation has the following advantages:
    \begin{itemize}
     \item[$\bullet$] Operations on the Jacobian variety $\Jac(\cu)$ correspond to operations in $\R^{2g}/\Z^{2g}$.
     \item[$\bullet$] $m$-torsion divisors under $\AJ$ are mapped to vectors of rational numbers with
     denominator dividing $m$.
         \end{itemize}
   \end{rmk}

  \subsection{Between ramification points}\label{subsec:ajm_ram_pts}

  Suppose we want to integrate $\bar\w$ from $P_0 =(x_0,0)$ to $P_k = (x_k,0)$. By construction there exists a path
 $(x_0=x_{k_0},x_{k_1},\dots,x_{k_{n-1}},x_{k_t}=x_k)$ in the spanning tree which connects $x_0$ and $x_k$. Thus, the integral splits into
  \begin{align*}
    \int_{P_0}^{P_k} \bar\w = \sum_{j = 0}^{t-1}  \int_{P_{k_j}}^{P_{k_{j+1}}} \bar\w.
  \end{align*}
  Denote $a = x_{k_j}, b = x_{k_{j+1}} \in X$. From \S \ref{m-subsec:cycles_homo} we know that for $(a,b) \in E$ a smooth path between $P_a=(a,0)$ and $P_b=(b,0)$ is given by
  \begin{align*}
   \gamma_{[a,b]}^{(0)} = \{  (x,\yab(x))  \mid  x \in [a,b]  \}.
  \end{align*}
  Let $\w_{i,j} \in \W$ be a differential. According to the proof of Theorem \ref{m-thm:periods} the corresponding integral is given by
  \begin{align*}
   \int_{\gamma_{[a,b]}^{(0)}} \w_{i,j}  & =
   C_{a,b}^{-j} \left(\frac{b-a}{2}\right)^i \int_{-1}^1 \frac{\varphi_{i,j}(u)}{(1-u^2)^{\frac{j}{m}}}  \du,
  \end{align*}
  which is (up to the constants) an elementary integral \eqref{m-eq:elem_ints}
  and has already been evaluated during the period matrix computation.

  \begin{rmk}
  Moreover, the image of the Abel-Jacobi map between ramification points is $m$-torsion, i.e. for any two $k,j \in \{1,\dots,n\}$ we have
  \begin{equation}\label{eq:m_tors}
    m \int_{P_j}^{P_k} \bar\w \equiv \AJ( [ mP_k - mP_j ] ) \equiv 0 \mod  \Lambda,
  \end{equation}
  since $\div\left( \frac{x-x_k}{x-x_j} \right) = mP_k - mP_j$ is a principal divisor.
  \end{rmk}

  \subsection{Reaching non-ramification points}\label{subsec:ajm_finite}

  Let $P = (x_P,y_P) \in \cu$ be a finite point and $P_a = (a,0)$ a ramification point such that $X\cap\,]a,x_P]\,=\varnothing$. In order to define a smooth path between $P$ and $P_a$
  we need to find a suitable analytic branch of $\cu$.

  This can be done following the approach in \S \ref{m-subsubsec:analytic_branches}, the only difference being that
  $x_P$ is not a branch point. Therefore, we are going to adjust the definitions and highlight the differences.

  Let  $\uaxp$ be the affine linear transformation
  that maps $[a,x_P]$ to $[-1,1]$.
  Similar to \eqref{m-eq:uab_image} we split up the image of $X$ under $\uaxp$ into subsets,
  but this time
 \begin{equation*}
  \uaxp(X) = \{-1\} \cup U^+ \cup U^-.
 \end{equation*}
  Then, $\ytaxp(u)$ can be defined exactly as in $\eqref{m-eq:ytab}$ and
  is holomorphic in a neighbourhood $\epsilon_{a,x_P}$ of $[-1,1]$.
  The term corresponding to $a$, that is
  \begin{equation*}
   \sqrt[m]{1+u},
  \end{equation*}
   has a branch cut $]-\infty,-1]$ and is holomorphic on the complement of this cut.

  Now we can define a branch of the curve,
   that is analytic in a neighbourhood $V_{a,x_P}$ of $]a,x_P]$, by
  \begin{equation*}
    \yaxp(x) =   C_{a,x_P} \ytaxp( \uaxp(x) ) \sqrt[m]{1 + \uaxp(x)},
  \end{equation*}
  where
   \begin{equation*}
      C_{a,x_P} = \left(\frac{x_P-a}{2}\right)^{\frac{n}{m}} e^{\frac{\pi i}{m}(\#U^+ \bmod 2)},
  \end{equation*}
  so that the statements of Proposition \ref{m-prop:yab} continue to hold for $\ytaxp$ and $\yaxp$,
  if we choose the sets $\epsilon_{a,x_P}$ and $V_{a,x_P}$ as if $x_P$ was a branch point.

  Therefore, the lifts of $[a,x_P]$ to $\cu$ are given by
    \begin{equation*}\label{eq:def_path_axp}
      \gamma^{(l)}_{[a,x_P]} = \{  (x,\zeta^l \yaxp(x))  \mid  x \in [a,x_P]  \}, \quad l \in \Z/m\Z.
   \end{equation*}
  In order to reach $P = (x_P,y_P)$ we have to pick the correct lift. This is done by computing a \emph{shifting number} $s \in \Z/m\Z$ at the endpoint $x_P$:
  \begin{equation*}
      \label{eq:saj}
   \zeta^s = \frac{y_P}{\yaxp(x_P)} = \frac{y_P}{ C_{a,x_P} \ytaxp( \uaxp(x_P) ) \sqrt[m]{2}  }
  \end{equation*}
  Consequently, $\gamma^{(s)}_{[a,x_P]}$ is a smooth path between $P_a$ and $P$ on $\cu$. We can now state the main theorem of this section.

  \begin{thm}\label{thm:ajm_finite_int}
  Let $\w_{i,j} \in \WM$ be a differential. With the choices and notation as above we have
 \begin{equation*}\label{eq:aj_int}
       \int_{P_a}^P \w_{i,j} = \zeta^{-sj} C_{a,x_P}^{-j} \left(\frac{x_P-a}{2}\right)^i \int_{-1}^1 \frac{\varphi_{i,j}(u)}{(1+u)^{\frac{j}{m}}}  \du,
  \end{equation*}
   where
   \begin{equation*}
    \varphi_{i,j}  = \left(u+\frac{x_P+a}{x_P-a}\right)^{i-1} \ytaxp(u)^{-j}
   \end{equation*}
   is holomorphic in a neighbourhood $\epsilon_{a,x_P}$ of $[-1,1]$
   and
   \begin{equation*}
   s = \frac{m}{2\pi} \arg\left(  \frac{y_P}{C_{a,x_P} \ytaxp( \uaxp(x_P) )} \right).
   \end{equation*}
  \end{thm}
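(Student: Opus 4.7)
The plan is to follow the same strategy as in the proof of Theorem \ref{m-thm:periods}, adapted to the situation where only the starting endpoint $a$ of the integration interval is a branch point. First I would verify that the path $\gamma^{(s)}_{[a,x_P]}$ actually joins $P_a$ to $P$. At $x=a$ we have $u_{a,x_P}(a)=-1$, so $\sqrt[m]{1+u_{a,x_P}(a)}=0$ and the lift starts at $(a,0)=P_a$. At $x=x_P$ we have $u_{a,x_P}(x_P)=1$, hence $y_{a,x_P}(x_P)=C_{a,x_P}\ytaxp(1)\sqrt[m]{2}$, and the condition $\zeta^s y_{a,x_P}(x_P)=y_P$ is exactly the defining relation for $s\in\Z/m\Z$ stated in the theorem (the real positive factor $\sqrt[m]2$ contributes nothing to the argument).

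Next I would pull back the differential along this lift. On $\gamma^{(s)}_{[a,x_P]}$ we have $y=\zeta^s y_{a,x_P}(x)$, so
\begin{equation*}
   \int_{P_a}^P \w_{i,j}
   = \int_a^{x_P}\frac{x^{i-1}}{(\zeta^s y_{a,x_P}(x))^j}\,\dx
   = \zeta^{-sj}\int_a^{x_P}\frac{x^{i-1}}{y_{a,x_P}(x)^j}\,\dx.
\end{equation*}
Substituting the factorization
\begin{equation*}
    y_{a,x_P}(x)^j = C_{a,x_P}^j\,\ytaxp(\uaxp(x))^j\,(1+\uaxp(x))^{j/m}
\end{equation*}
and applying the affine change of variable $x=\xaxp(u)$, which yields $\dx=\tfrac{x_P-a}{2}\du$ and $x^{i-1}=\bigl(\tfrac{x_P-a}{2}\bigr)^{i-1}\bigl(u+\tfrac{x_P+a}{x_P-a}\bigr)^{i-1}$, produces exactly the claimed formula after collecting the powers of $\tfrac{x_P-a}{2}$.

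The remaining assertion is that $\varphi_{i,j}$ is holomorphic on $\epsilon_{a,x_P}$. The polynomial factor $\bigl(u+\tfrac{x_P+a}{x_P-a}\bigr)^{i-1}$ is entire, and the analogue of Proposition \ref{m-prop:yab} for $\ytaxp$ (built from the splitting $\uaxp(X)=\{-1\}\cup U^+\cup U^-$ exactly as in \S \ref{m-subsubsec:analytic_branches}) guarantees that $\ytaxp$ is holomorphic and non-vanishing on $\epsilon_{a,x_P}$, because all the branch cuts of its constituent $m$-th roots are directed outward, away from the neighbourhood of $[-1,1]$.

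There is no real obstacle here beyond bookkeeping: the essential novelty compared to Theorem \ref{m-thm:periods} is that $x_P$ is not a branch point, so the factor at the right endpoint does not contribute a $\sqrt[m]{1-u}$ term, leaving only the $(1+u)^{j/m}$ singularity at $u=-1$ in the resulting integrand. The main care is to confirm that the hypothesis $X\cap\,]a,x_P]=\varnothing$ correctly plays the role of the interior-free condition used in \S \ref{m-subsec:roots_branches}, so that $y_{a,x_P}$ is well defined and holomorphic along the path of integration.
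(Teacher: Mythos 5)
Your proof is correct and follows essentially the same route as the paper: factor out $\zeta^{-sj}$ from the lift $\gamma^{(s)}_{[a,x_P]}$, substitute the factorization of $y_{a,x_P}$, apply the affine change of variable $x = x_{a,x_P}(u)$, and invoke the analogue of Proposition \ref{m-prop:yab} for holomorphicity. The additional commentary you supply (verifying the endpoints of the lift and noting that $\sqrt[m]{2}>0$ does not affect the argument defining $s$) is consistent with the set-up the paper establishes just before the theorem rather than a deviation from its proof.
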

  \begin{proof}
    We have
    \begin{align*}
     \int_{P_a}^P \w_{i,j}  & =  \int_{\gamma^{(s)}_{[a,x_P]}} \frac{x^{i-1}}{y^j}  \dx  =  \zeta^{-sj} \int_a^{x_P} \frac{x^{i-1}}{\yaxp(x)^j}  \dx \\  & =
     \zeta^{-sj} C_{a,x_P}^{-j}  \int_a^{x_P} \frac{x^{i-1}}{(1+\uaxp(x))^{\frac{j}{m}}\ytaxp(\uaxp(x))^j}  \dx
  \end{align*}
   Applying the transformation $u = \uaxp(x)$ introduces the derivative $\dx = \left(\frac{x_P-a}{2}\right) \du$. Hence
  \begin{align*}
    \int_{P_a}^P \w_{i,j} & =  \zeta^{-sj} C_{a,x_P}^{-j} \left(\frac{x_P-a}{2}\right) \int_a^{x_P} \frac{\xaxp(u)^{i-1}}{(1+u)^{\frac{j}{m}}\ytaxp(u)^j}  \du \\ & =
   \zeta^{-sj} C_{a,x_P}^{-j} \left(\frac{x_P-a}{2}\right)^i \int_a^{x_P} \frac{\left(u + \frac{x_P+a}{x_P-a}\right)^{i-1}}{(1+u)^{\frac{j}{m}}\ytaxp(u)^j}  \du.
  \end{align*}
  The statement about holomorphicity of $\varphi_{i,j}$ is implied, since
  Proposition \ref{m-prop:yab} holds for $\ytaxp$ and $\yaxp$ as discussed
  above.
  \end{proof}

  \begin{rmk}\label{rmk:ajm_finite_int}
   By Theorem \ref{m-thm:ajm_finite_int}, the problem of integrating $\bar\w$ from $P_0$ to $P$ reduces to numerical integration of
    \begin{equation*}
       \int_{-1}^1 \frac{\varphi_{i,j}(u)}{(1+u)^{\frac{j}{m}}}  \du.
   \end{equation*}
   Although these integrals are singular at only one end-point, they
   can still  be computed using the double-exponential estimates presented in Section
   \ref{m-sec:numerical_integration} (this is not true for the Gauss-Chebychev method).
   \end{rmk}

  \subsection{Infinite points}\label{subsec:ajm_infty}

  Recall from \S \ref{m-subsec:se_def} that there are $\delta = \gcd(m,n)$ points $P_{\infty}^{(i)}$ at infinity on our projective curve $\cu$, so
  we introduce the set $\mathcal{P}
  = \{ P_{\infty}^{(1)},\dots, P_{\infty}^{(\delta)} \}$.

  Suppose we want to integrate from $P_0$ to $P_{\infty} \in \mathcal{P}$, which is equivalent to computing the Abel-Jacobi map of the divisor
  $D_{\infty} = P_{\infty} - P_0$.

  Our strategy is to explicitly apply Chow's moving lemma to $D_{\infty}$: we construct a principal divisor $D \in \Prin(\cu)$ such that $\supp(D) \cap \mathcal{P} = \{ P_{\infty} \}$
  and $\ord_{P_{\infty}}(D) = \pm 1$. Then, by definition of the Abel-Jacobi map,
  \begin{equation*}
  \AJ([D_{\infty} \mp D]) \equiv \AJ([D_{\infty}]) \equiv \int_{P_0}^{P_{\infty}} \bar\w \mod \Lambda
  \end{equation*}
  and $\supp(D_{\infty} \mp D) \cap \mathcal{P} = \varnothing$.

  The exposition in this paragraph will explain the construction of $D$, while distinguishing three different cases.

  In the following denote by  $-\mu,\nu > 0$ the coefficients of the Bézout identity
  \begin{equation*}
    \mu m + \nu n = \delta.
  \end{equation*}

  \begin{rmk}
   Note that there are other ways of computing $\AJ([D_{\infty}])$. For instance, using transformations or direct numerical
   integration. Especially in the case $\delta = m$ a transformation (see Remark \ref{m-rmk:moebius}) is the better option and may be used in practice.
   The advantage of this approach is that we can stay in our setup, i.e. we can compute solely on $\caff$ and
   keep the integration scheme.
  \end{rmk}

  \subsubsection{Coprime degrees}\label{subsec:ajm_inf_cop}

  For $\delta = 1$ there is only one infinite point $\mathcal{P} = \{ P_{\infty} \}$ and
  we can easily compute  $\AJ( [D_{\infty}] )$ by adding a suitable
  principal divisor $D$
    \begin{align*}
      &\div(y^{\nu})=  \nu \sum_{k = 1}^n P_k - \nu n P_{\infty},\\
      &\div((x-x_0)^{\mu})  =  \mu m P_0 - \mu m P_{\infty} ,\\
      D  =  & \div(y^{\nu}(x-x_0)^{\mu})  = \nu \sum_{k = 1}^n P_k + \mu m P_0 - P_{\infty}.
    \end{align*}
    We immediately obtain
    \begin{align*}
     \AJ( [D_{\infty}] )  & \equiv  \AJ([D_{\infty} + D])  =  \AJ( [ \nu \sum_{k = 1}^n P_k + (\mu m - 1) P_0 ]  ) \\ &
      \equiv   \nu \sum_{k=1}^n \int_{P_0}^{P_k} \bar\w \mod \Lambda
    \end{align*}
    and conclude that $\AJ( [D_{\infty}] )$ can be expressed in terms of integrals between
    ramification points (see \S \ref{m-subsec:ajm_ram_pts}).
  \begin{rmk}\label{rmk:sum_infty_pts}
  In general, the principal divisor
  \begin{equation*}
    D  := \div(y^{\nu}(x-x_0)^{\mu}) = \nu \sum_{k = 1}^n P_k + \mu m P_0 - \sum_{l = 1}^{\delta} P^{(l)}_{\infty}
  \end{equation*}
  yields the useful relation
  \begin{equation*}
  \nu \sum_{k=1}^n \int_{P_0}^{P_k} \bar\w \equiv \sum_{l = 1}^{\delta} \int_{P_0}^{P^{(l)}_{\infty}} \bar\w \mod \Lambda.
  \end{equation*}
  \end{rmk}

 \subsubsection{Non-coprime degrees}\label{subsec:ajm_inf_ncop}

  For $\delta > 1$ the problem becomes a lot harder. First we need a way to distinguish between the infinite points in $\mathcal{P}
  = \{ P_{\infty}^{(1)},\dots, P_{\infty}^{(\delta)} \}$ and second they are singular points
  on the projective closure of our affine model $\caff$
  whenever $m \ne \{n,n\pm1\}$.

  As shown in \cite[\S 1]{CT1996} we obtain a second affine patch of $\cu$ that is non-singular along $\mathcal{P}$
  in the following way:

  Denoting $M = \frac{m}\delta$ and $N = \frac{n}\delta$, we consider the birational transformation
  \begin{equation*}
   (x,y) = \Phi(r,t) = \left(\frac{1}{r^{\nu}t^M},\frac{r^{\mu}}{t^N}\right)
  \end{equation*}
  which results in an affine model
  \begin{equation*}
   \cafft : r^{\delta} = \prod_{k=1}^n (1-x_kr^{\nu}t^M).
  \end{equation*}
  The inverse transformation is given by
  \begin{equation*}
   (r,t) = \Phi^{-1}(x,y) = \left(\frac{y^M}{x^N},\frac{1}{x^{\mu}y^{\nu}}\right).
  \end{equation*}
  Under this transformation the infinite points in $\mathcal{P}$ are mapped to finite points that have the coordinates
  \begin{equation*}
   (r,t) = (\zeta_{\delta}^s,0) \quad s= 1,\dots,\delta,
  \end{equation*}
  where $\zeta_{\delta} = e^{\frac{2\pi i }{\delta}}$.
  Hence, we can describe the points in $\mathcal{P} \subset \cu$ via
   \begin{equation*}
      P_{\infty}^{(s)} = \Phi^{-1}(\zeta_{\delta}^s,0).
   \end{equation*}

   Suppose we want to compute the Abel-Jacobi map of $D_{\infty}^{(s)} = P_{\infty}^{(s)} - P_0$ for $s \in \{1,\dots,\delta\}$.
   Again following our strategy,
   this time working on $\cafft$, we look at the intersection of the vertical line through $(\zeta_{\delta}^s,0)$ with
   $\cafft$
   \begin{equation*}
      E_1 = \div(r - \zeta_{\delta}^s) = \sum_{i = 1}^{d} \left(\zeta_{\delta}^s,t_i^{(s)}\right) - N E_1'
   \end{equation*}
      where the $t_i^{(s)}$ are the zeros of $h(t) = \prod_{k=1}^n (1-x_k\zeta_{\delta}^{s\nu}t^M) - 1 \in \C[t]$ and
    \begin{equation}\label{eq:zero_bp1}
       E_1' = \begin{cases}
	     (m-M) \Phi^{-1}(0,0), \quad \text{if} \; 0 \in X, \\
             \sum_{Q \in\text{pr}_x^{-1}(0)} \Phi^{-1}(Q) \quad \text{otherwise}
            \end{cases}
    \end{equation}
    Note that $E_1$ satisfies $\supp(E_1) \cap \Phi(\mathcal{P}) = \{ (\zeta_{\delta}^s,0) \}$.
    Now, we can define the corresponding principal divisor on $\caff$ by
    \begin{equation*}
       D_1 := \div \left( \frac{y^M}{x^N} - \zeta_{\delta}^s \right);
    \end{equation*}
   then $\ord_{P_{\infty}^{(s)}}(D_1) \ge 1$ by construction.

  \begin{thm}\label{thm:ajm_inf_ord1}
    Assume $\ord_{P_{\infty}^{(s)}}(D_1) = 1$ and $0 \not\in X$. Then, for $s = 1,\dots,\delta$, there exist points $Q_1^{(s)},\dots,Q_{d-1}^{(s)} \in \cu \setminus \mathcal{P}$ such that
    \begin{equation}\label{eq:ajm_inf_ord1}
       \AJ( [D_{\infty}^{(s)}] ) \equiv -\sum_{i = 1}^{n-1} \int_{P_0}^{Q_i^{(s)}} \bar\w \mod \Lambda.
    \end{equation}
  \end{thm}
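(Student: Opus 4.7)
The plan is to exhibit an explicit meromorphic function $F$ on $\cu$ whose divisor has the form
$$\div(F) = P_\infty^{(s)} + \sum_{i=1}^{n-1} Q_i^{(s)} - n\,P_0,$$
so that the conclusion follows at once from Abel's theorem applied to this principal divisor.

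Concretely, I would take
$$F := \frac{y^M - \zeta_\delta^s\,x^N}{(x-x_0)^N},$$
which is well-defined since $x_0 \in X$ and $0 \notin X$ force $x_0 \ne 0$. Factoring $y^M - \zeta_\delta^s\,x^N = x^N(r - \zeta_\delta^s)$ with $r = y^M/x^N$, one obtains
$$\div(F) = D_1 + N\bigl(\div(x) - \div(x-x_0)\bigr).$$
The two divisors on the right, namely $\div(x) = \sum_{i=1}^m(0,\eta_i) - M\sum_{s'=1}^\delta P_\infty^{(s')}$ and $\div(x-x_0) = mP_0 - M\sum_{s'=1}^\delta P_\infty^{(s')}$ (with $\eta_i = \zeta^i\sqrt[m]{f(0)}$), agree on the infinite part, so their difference is $\sum_i(0,\eta_i) - mP_0$; since the pole part of $D_1$ is exactly $N\sum_i(0,\eta_i)$ (as read off from the definition of $E_1$ preceding the theorem), the contributions above $x=0$ cancel out completely.

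What remains of $\div(F)$ is $P_\infty^{(s)}$ (with coefficient $1$ by the standing hypothesis), a pole of order $Nm$ at $P_0$, and the finite zeros of $D_1$ lying outside the fiber above $x=0$. These finite zeros correspond bijectively to the non-zero roots of the polynomial $h(t) = \prod_k(1-x_k\zeta_\delta^{s\nu}t^M) - 1$ via the birational map $\Phi^{-1}$; direct evaluation shows the numerator $y^M - \zeta_\delta^s x^N$ does not vanish at $(0,\eta_i)$ (value $\eta_i^M$) nor at $P_0 = (x_0,0)$ (value $-\zeta_\delta^s x_0^N$), so every such zero belongs to $\cu \setminus \mathcal{P}$ and is distinct from $P_0$. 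Matching the pole degree $Nm = n$, the effective zero divisor supported on $\caff$ must have total degree $n-1$, producing the sought points $Q_1^{(s)}, \dots, Q_{n-1}^{(s)}$.

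Abel's theorem now yields $\AJ(\div F) \equiv 0 \bmod \Lambda$, i.e.
$$\int_{P_0}^{P_\infty^{(s)}}\bar\omega + \sum_{i=1}^{n-1}\int_{P_0}^{Q_i^{(s)}}\bar\omega \equiv 0 \pmod{\Lambda},$$
and rearranging is exactly the claimed identity. The main obstacle is the degree/multiplicity bookkeeping in the previous step: under the hypothesis $\ord_{P_\infty^{(s)}}(D_1) = 1$, one must verify, via the local analysis on $\cafft$ near each infinite point, that the finite zero count is exactly $n-1$ and that none of these zeros escapes to the singular locus $\mathcal{P}$ or coincides with $P_0$; once this bookkeeping is pinned down, the Abel-Jacobi application is immediate.
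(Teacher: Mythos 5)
Your proof is correct and arrives at the same conclusion as the paper's, but through a mildly different packaging. The paper works with $D_1 = \div\bigl(\tfrac{y^M}{x^N}-\zeta_\delta^s\bigr)$ directly, writes $\AJ([D_\infty^{(s)}]) \equiv \AJ([D_\infty^{(s)}-D_1])$, and then disposes of the term $N\sum_{Q \in \pr^{-1}(0)}Q$ by an ad hoc lemma: for each holomorphic $\omega_{\tilde i,j}$ it computes $\sum_{l=0}^{m-1}\int_{P_0}^{(0,\zeta^l\sqrt[m]{f(0)})}\omega_{\tilde i,j}$ explicitly, factoring out $m\int_{P_0}^{P_k}\omega_{\tilde i,j}$ (which is $0$ mod $\Lambda$ by the $m$-torsion relation) plus a geometric-series-in-$\zeta^{-j}$ term that vanishes because $1\le j\le m-1$. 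You instead fold that cancellation into the divisor itself by multiplying $r-\zeta_\delta^s$ with $(x/(x-x_0))^N$, producing a single function $F$ whose divisor avoids $\pr^{-1}(0)$ from the start. The underlying fact is the same --- namely that $\div\bigl(\tfrac{x}{x-x_0}\bigr) = \sum_{Q\in\pr^{-1}(0)}Q - mP_0$ contributes nothing to Abel--Jacobi --- but your route is a bit cleaner: it replaces the roots-of-unity summation by a single application of Abel's theorem, and the degree/multiplicity bookkeeping ($M=1$, $\deg h = n$, $Nm = n$, $t=0$ a simple root giving $P_\infty^{(s)}$, the $n-1$ remaining roots off $\mathcal P$) is carried out correctly and matches the paper's. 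Both proofs are valid; yours buys a small gain in economy at the cost of requiring the reader to verify $\div(x)-\div(x-x_0)$ explicitly, which the paper implicitly encodes in the $m$-torsion relation and the $\zeta$-sum.
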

  \begin{proof}
    First note that $\ord_{P_{\infty}^{(s)}}(D_1) = 1$ implies $M = 1$, i.e. $m = \delta$. Together with the assumption
    $0 \not\in X$, this gives us $\deg(h) = n$.
    Moreover, we can assume that $t_n^{(s)} = 0$ and $t_i^{(s)} \ne 0$ for $i=1,\dots,n-1$.
    Therefore,
    \begin{align*}
      \AJ( [D_{\infty}^{(s)}] )  & \equiv  \AJ([D_{\infty}^{(s)} - D_1])
      \equiv  -\AJ \left( \left[\sum_{i = 1}^{d-1} \Phi(\zeta_{\delta}^s,t_i^{(s)}) - N \sum_{Q \in\text{pr}_x^{-1}(0)} Q  \right] \right)
       \mod \Lambda.
    \end{align*}
    Since  $0 \not\in X$ the sum over the integrals from $P_0$ to all $Q \in\text{pr}_x^{-1}(0)$ vanishes modulo the period lattice $\Lambda$ (in fact this is true for any non-branch point).
    Namely, for every $\wtij \in \W$ we have
    \begin{align*}
      & \sum_{Q \in\text{pr}_x^{-1}(0)} \int_{P_0}^Q \wtij =  \sum_{l=0}^{m-1} \int_{P_0}^{(0,\zeta^l \sqrt[m]{f(0)})} \wtij \\
    & = m \int_{P_0}^{P_k} \wtij + \left(1 + \zeta^{-j} + \dots + \zeta^{-j(m-1)}\right) \int_{P_0}^{(0,\sqrt[m]{f(0)})} \wtij
    \end{align*}
    for some $k \in \{1,\dots,n\}$ and therefore
    \begin{equation*}
       \sum_{Q \in\text{pr}_x^{-1}(0)} \int_{P_0}^Q \bar\w =  m \int_{P_0}^{P_k} \bar\w \overset{\eqref{m-eq:m_tors}}{\equiv} 0 \mod \Lambda.
    \end{equation*}
    If we take $Q_i^{(s)} = \Phi(\zeta_{\delta}^s,t_i^{(s)}) \in \cu \setminus \mathcal{P}$, $i = 1,\dots,d-1$, we are done:
    \begin{align*}
    -\AJ( [\sum_{i = 1}^{d-1} \Phi(\zeta_{\delta}^s,t_i^{(s)}) - N \sum_{Q \in\text{pr}_x^{-1}(0)} Q ] ) \equiv
    -\sum_{i = 1}^{n-1} \int_{P_0}^{Q_i^{(s)}} \bar\w \mod \Lambda.
    \end{align*}
     \end{proof}

    In the case of Theorem \ref{m-thm:ajm_inf_ord1} there exist additional relations between the vector integrals in
    \eqref{m-eq:ajm_inf_ord1} which we are going to establish now.
    Let $s \in \{1,\dots,\delta\}$, fix $i \in \{1,\dots,n-1\}$ and denote $Q^{(s)} = Q_i^{(s)}$ and $t^{(s)} = t_i^{(s)}$.
    On $\cafft$ we have the relation
     \begin{equation*}
	( \zeta_{\delta}^s,t^{(s)} ) = ( \zeta_{\delta}^s, \zeta_{\delta}^{-\nu s} t^{(\delta)} )
     \end{equation*}
     and therefore, if we write $(x^{(s)},y^{(s)}) :=  \Phi( \zeta^s,t^{(s)})$, then
     \begin{equation*}
      Q^{(s)} = (x^{(s)},y^{(s)}) = (x^{(\delta)},\zeta_{\delta}^{(\mu+\nu N)s} y^{(\delta)})).
     \end{equation*}
     The $Q^{(s)}$ having identical $x$-coordinates implies that there exists a $k \in \{1,\dots,n\}$ such that
     \begin{equation*}
      \int_{P_0}^{Q^{(s)}} \bar\w =  \int_{P_0}^{P_k} \bar\w  +  \int_{P_k}^{Q^{(s)}}  \bar\w,
     \end{equation*}
     while the relation between their $y$-coordinates yields
     \begin{equation*}
       \int_{P_k}^{Q^{(s)}} \wtij = \zeta_{\delta}^{-(\mu+\nu N)sj} \int_{P_k}^{Q^{(\delta)}} \wtij
     \end{equation*}
     for all $\wtij \in \W$. This proves the following corollary:

    \begin{coro}\label{coro:ajm_inf_ord1}
    Under the assumptions of Theorem \ref{m-thm:ajm_inf_ord1} and with the above notation we can obtain the
    image of $D_{\infty}^{(s)}$ under the Abel-Jacobi map
    for all $s = 1,\dots,\delta$ from the $n-1$ vector integrals
    \begin{equation*}
     \int_{P_k}^{Q_i^{(\delta)}} \bar\w, \quad i = 1,\dots,n-1.
    \end{equation*}
    \end{coro}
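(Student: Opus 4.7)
The plan is to assemble the corollary from the three ingredients already set up in the paragraph preceding its statement.

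First I would start from Theorem~\ref{m-thm:ajm_inf_ord1}, which says that for each $s\in\{1,\dots,\delta\}$,
\begin{equation*}
    \AJ([D_\infty^{(s)}]) \equiv -\sum_{i=1}^{n-1}\int_{P_0}^{Q_i^{(s)}}\bar\omega \mod \Lambda.
\end{equation*}
So it suffices to express every $\int_{P_0}^{Q_i^{(s)}}\bar\omega$ in terms of the target integrals $\int_{P_k}^{Q_i^{(\delta)}}\bar\omega$ and quantities that are already available from the period matrix computation.

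Next, fix $i$ and write $Q_i^{(\delta)}=(x_i^{(\delta)},y_i^{(\delta)})$. From the explicit form of $\Phi$ and the identity $(\zeta_\delta^s,t_i^{(s)})=(\zeta_\delta^s,\zeta_\delta^{-\nu s}t_i^{(\delta)})$ in $\cafft$ one obtains $Q_i^{(s)}=(x_i^{(\delta)},\zeta_\delta^{(\mu+\nu N)s}y_i^{(\delta)})$, so $Q_i^{(s)}$ and $Q_i^{(\delta)}$ lie in the same fiber of $\pr$. Choose the ramification point $P_k=(x_k,0)$ already specified in the discussion (any point in that fiber is fine, modulo $\Lambda$) and split
\begin{equation*}
    \int_{P_0}^{Q_i^{(s)}}\bar\omega = \int_{P_0}^{P_k}\bar\omega + \int_{P_k}^{Q_i^{(s)}}\bar\omega.
\end{equation*}
The first summand is an integral between ramification points, already handled by Section~\ref{m-subsec:ajm_ram_pts}.

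For the second summand I would use the pointwise transformation of the integrand. Since $\omega_{\tilde i,j}=x^{\tilde i-1}y^{-j}\dx$ and $Q_i^{(s)}$, $Q_i^{(\delta)}$ differ only by the factor $\zeta_\delta^{(\mu+\nu N)s}$ on the $y$-coordinate, following the analytic continuation above the same $x$-path from $P_k$ yields
\begin{equation*}
    \int_{P_k}^{Q_i^{(s)}}\omega_{\tilde i,j}=\zeta_\delta^{-(\mu+\nu N)sj}\int_{P_k}^{Q_i^{(\delta)}}\omega_{\tilde i,j},
\end{equation*}
which is exactly the rescaling relation stated before the corollary. Substituting back, $\AJ([D_\infty^{(s)}])$ becomes a $\C$-linear combination, with coefficients explicit roots of unity, of the $n-1$ vector integrals $\int_{P_k}^{Q_i^{(\delta)}}\bar\omega$ for $i=1,\dots,n-1$, plus integrals already computed, proving the claim.

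There is no real obstacle; the only subtlety is that the $x$-projection of a path from $P_k$ to $Q_i^{(s)}$ must admit a continuation of the branch of $y$ passing through $\zeta_\delta^{(\mu+\nu N)s}y_i^{(\delta)}$. This is handled exactly as in \S\ref{m-subsec:ajm_finite}: choose a segment $[x_k,x_i^{(\delta)}]$ avoiding $X$ and use the locally analytic branch $y_{x_k,x_i^{(\delta)}}$ given by Proposition~\ref{m-prop:yab}, so that the rescaling identity above is a literal equality of definite integrals rather than a congruence modulo $\Lambda$.
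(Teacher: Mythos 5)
Your proof is correct and follows essentially the same route as the paper: the preparatory paragraph before the corollary establishes exactly the same three ingredients (identical $x$-coordinates of $Q_i^{(s)}$ and $Q_i^{(\delta)}$, the decomposition $\int_{P_0}^{Q^{(s)}} = \int_{P_0}^{P_k} + \int_{P_k}^{Q^{(s)}}$ through a ramification point, and the rescaling $\int_{P_k}^{Q^{(s)}}\wtij = \zeta_\delta^{-(\mu+\nu N)sj}\int_{P_k}^{Q^{(\delta)}}\wtij$), and the corollary is read off by substitution into Theorem~\ref{m-thm:ajm_inf_ord1}. Your closing observation that the rescaling is a literal equality of definite integrals — by taking the same $x$-segment and noting $\zeta_\delta^{(\mu+\nu N)s}$ is an $m$-th root of unity so it shifts sheets — is a good explicit justification of a step the paper leaves tacit.
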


    \bigskip

    Unfortunately, this is just a special case. If $\ord_{P_{\infty}^{(s)}}(D_1)$ is greater than $1$ (for instance, if $\delta \ne m$),
     the vertical line defined by $r-\zeta_{\delta}^s$ is tangent to the curve $\cafft$
    at $(\zeta_{\delta}^s,0)$ and cannot be used for our purpose.

    Consequently, we must find another function. One possible choice here is the line defined by $r - t -  \zeta_{\delta}^s$, which
    is now guaranteed to have a simple intersection with $\cafft$
    at $(\zeta_{\delta}^s,0)$ and does not intersect $\cafft$ in $(\zeta_{\delta}^{s'},0)$, $s \ne s'$.

    The corresponding principal divisor is given by
     \begin{equation*}
      E_2 = \div(r - t - \zeta_{\delta}^s) = \sum_{i = 1}^{d} (t_i^{(s)}+\zeta_{\delta}^s,t_i^{(s)}) - \nu \sum_{k=1}^n
      \Phi^{-1}(x_k,0)- N E_2',
   \end{equation*}
      where the $t_i^{(s)}$ are the zeros of $h(t) = \prod_{k=1}^n (1-x_k(t+\zeta_{\delta}^{(s)})^{\nu}t^M) - 1 \in \C[t]$, $d = \deg(h)$ and
    \begin{equation}\label{eq:zero_bp2}
       E_2' = \begin{cases}
	   (m-\frac{M+\nu}{N}) \Phi^{-1}(0,0), \quad \text{if} \; 0 \in X, \\
             \sum_{Q \in\text{pr}_x^{-1}(0)} \Phi^{-1}Q, \quad \text{otherwise.}
             \end{cases}
    \end{equation}
    Now,
    \begin{equation*}
     D_2 := \div \left( \frac{y^M}{x^N} - \frac{1}{x^{\mu}y^{\nu}} - \zeta_{\delta}^s \right)
    \end{equation*}
    is a principal divisor on $\caff$ such that
    $\ord_{P_{\infty}^{(s)}}(D_2) = 1$.

    \begin{thm}\label{thm:ajm_inf_ordgt1}
      Assume $\ord_{P_{\infty}^{(s)}}(D_1) > 1$ and $0 \not\in X$. Then, for $s = 1,\dots,\delta$, there exist points $Q_1^{(s)},\dots,Q_{d-1}^{(s)} \in \cu \setminus \mathcal{P}$ such that
    \begin{equation*}
       \AJ( [D_{\infty}^{(s)}] ) \equiv -\sum_{i = 1}^{d-1} \int_{P_0}^{Q_i^{(s)}} \bar\w + \nu \sum_{k=1}^n
      \int_{P_0}^{P_k} \bar\w \mod \Lambda,
    \end{equation*}
    where $d = n(\nu+M)$.
    \end{thm}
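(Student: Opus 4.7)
The plan is to mimic the argument of Theorem \ref{thm:ajm_inf_ord1}, only replacing the function $r-\zeta_{\delta}^s$ (which no longer has a simple zero at $P_{\infty}^{(s)}$) by $r-t-\zeta_{\delta}^s$. Since $D_2$ is principal, the starting point is
\begin{equation*}
\AJ([D_{\infty}^{(s)}]) \equiv \AJ([D_{\infty}^{(s)} - D_2]) \mod \Lambda,
\end{equation*}
and the work reduces to expanding $D_2$ into a sum of affine contributions via the explicit expression of $E_2 = \div(r-t-\zeta_{\delta}^s)$ on $\cafft$ given above the statement.

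First I would determine the degree of $h(t) = \prod_{k=1}^n \bigl(1 - x_k(t+\zeta_{\delta}^s)^{\nu} t^{M}\bigr) - 1$: each factor has degree $\nu+M$ in $t$, so $d = n(\nu+M)$ as claimed. Next, since $h(0) = \prod_k 1 - 1 = 0$, the value $t = 0$ is a root, and the corresponding point $(\zeta_{\delta}^s,0) \in \cafft$ is precisely $\Phi^{-1}(P_{\infty}^{(s)})$; by construction $\ord_{P_{\infty}^{(s)}}(D_2) = 1$, so this root is simple. After reindexing so that $t_{d}^{(s)} = 0$, the remaining roots $t_1^{(s)}, \dots, t_{d-1}^{(s)}$ are nonzero and, by the choice of the function $r-t-\zeta_{\delta}^s$, yield affine points $Q_i^{(s)} := \Phi(\zeta_{\delta}^s + t_i^{(s)}, t_i^{(s)}) \in \cu \setminus \mathcal{P}$. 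Substituting the resulting expression for $E_2$ and cancelling the single $P_{\infty}^{(s)}$ on both sides gives
\begin{equation*}
\AJ([D_{\infty}^{(s)}]) \equiv -\sum_{i=1}^{d-1}\int_{P_0}^{Q_i^{(s)}}\bar\w + \nu\sum_{k=1}^n\int_{P_0}^{P_k}\bar\w + N\sum_{Q \in \pr^{-1}(0)}\int_{P_0}^Q \bar\w \mod \Lambda.
\end{equation*}
The third sum vanishes by exactly the same computation as in the proof of Theorem \ref{thm:ajm_inf_ord1}: using $0 \notin X$, the fibre $\pr^{-1}(0)$ decomposes as $\{(0,\zeta^l\sqrt[m]{f(0)})\}_{l=0}^{m-1}$, and the sum collapses via \eqref{eq:m_tors} together with $\sum_l \zeta^{-jl} = 0$. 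This yields the claimed identity.

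The main subtlety — and really the only non-routine point — is to make sure that the multiplicity bookkeeping is correct: namely that $t = 0$ contributes with multiplicity exactly one to $h$ and that no other $t_i^{(s)}$ maps to an infinite point of $\cu$. The first follows because $t$ is a local parameter at $(\zeta_{\delta}^s,0)$ on the smooth chart $\cafft$ and $r - t - \zeta_{\delta}^s$ vanishes to order one in $t$ there; the second is immediate since the other infinite points $\Phi^{-1}(P_{\infty}^{(s')}) = (\zeta_{\delta}^{s'},0)$ for $s' \neq s$ do not satisfy $r - t = \zeta_{\delta}^s$. Once this is in place, the rest of the proof is a direct expansion of divisors exactly analogous to the coprime-degree case.
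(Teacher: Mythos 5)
Your proposal is correct and follows essentially the same route as the paper's proof: expand $D_2 = \div\bigl(\tfrac{y^M}{x^N} - \tfrac{1}{x^\mu y^\nu} - \zeta_\delta^s\bigr)$, observe that $0\notin X$ forces $\deg(h)=n(\nu+M)=d$, peel off the simple root $t=0$ (which is simple precisely because $r-\zeta_\delta^s$ vanishes to order $>1$ in the local parameter $t$, so $r-t-\zeta_\delta^s$ vanishes to order exactly one), and invoke the same vanishing-over-$\pr_x^{-1}(0)$ argument as in Theorem~\ref{m-thm:ajm_inf_ord1}. The paper states these points more tersely and simply cites ``the same reasoning as in the proof of Theorem~\ref{m-thm:ajm_inf_ord1}''; you have filled in the multiplicity bookkeeping explicitly, which is the only nontrivial check.
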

   \begin{proof}
    First note that $0 \not\in X$ implies $d = \deg(h) = n(\nu+M)$. Moreover, our assumption implies $\ord_{P_{\infty}^{(s)}}(D_2) = 1$ so that we may assume
    $t_d^{(s)} = 0$ and $t_i^{(s)} \ne 0$ for $i=1,\dots,d-1$. 	Then,
    \begin{align*}
     \AJ( [D_{\infty}^{(s)}] ) \equiv & \AJ([D_{\infty}^{(s)} - D_2]) \\
      \equiv & -\AJ( [\sum_{i = 1}^{d-1} \Phi(t_i^{(s)}+\zeta_{\delta}^s,t_i^{(s)}) - \nu \sum_{k=1}^n
      (x_k,0) - N \sum_{Q \in\text{pr}_x^{-1}(0)} Q ] )
       \mod \Lambda.
    \end{align*}
   Choosing the points
    $Q_i^{(s)} = \Phi(t_i^{(s)}+\zeta_{\delta}^s,t_i^{(s)}) \in \cu \setminus \mathcal{P}$ and
     using the same reasoning as in the proof of Theorem \ref{m-thm:ajm_inf_ord1}
    proves the statement.
     \end{proof}

    \begin{rmk}\label{rmk:zero_bp}
     We can easily modify the statements of the Theorems \ref{m-thm:ajm_inf_ord1} and \ref{thm:ajm_inf_ordgt1} to hold for
     $0 \in X$, i.e. when $0$ is a branch point.
     Using equation \eqref{m-eq:zero_bp1}, the statement of Theorem \ref{m-thm:ajm_inf_ord1} becomes
      \begin{equation*}
       \AJ( [D_{\infty}^{(s)}] ) \equiv -\sum_{i = 1}^{n-1} \int_{P_0}^{Q_i^{(s)}} \bar\w + N(m-M) \int_{P_0}^{(0,0)} \bar\w \mod \Lambda,
    \end{equation*}
    whereas, using equation \eqref{m-eq:zero_bp2}, the statement of Theorem \ref{m-thm:ajm_inf_ordgt1} becomes
      \begin{equation*}
       \AJ( [D_{\infty}^{(s)}] ) \equiv -\sum_{i = 1}^{d-1} \int_{P_0}^{Q_i^{(s)}} \bar\w + \nu \sum_{k=1}^n
      \int_{P_0}^{P_k} \bar\w + (Nm-M-\nu) \int_{P_0}^{(0,0)} \bar\w \mod \Lambda,
    \end{equation*}
    with $d = n(\nu+M)$.
    \end{rmk}

  \subsection{Reduction modulo period lattice}\label{subsec:lat_red}

    In order for the Abel-Jacobi map to be well defined we have to reduce modulo the period lattice $\Lambda =
  \Omega\Z^{2g}$, where $\Omega = (\OA, \OB)$ is the big period matrix, computed as explained in
  Section \ref{m-sec:strat_pm}.

   Let $v = \int_P^Q \bar\w \in \C^g$ be a vector obtained by integrating the holomorphic differentials in
   $\W$.
   We identify $\C^g$ and $\R^{2g}$ via the bijection
   \begin{equation*}
    \iota: v = (v_1,\dots,v_g)^T \mapsto (\Re(v_1),\dots,\Re(v_g),\Im(v_1),\dots,\Im(v_g))^T.
   \end{equation*}
    Applying $\iota$ to the columns of $\Omega$ yields the invertible real matrix
   \begin{equation*}
    \Omega_{\R} =
   \begin{pmatrix}
     \Re(\OA) & \Re(\OB) \\
     \Im(\OA) & \Im(\OB)
    \end{pmatrix} \in \R^{2g \times 2g}.
   \end{equation*}
   Now, reduction of $v$ modulo $\Lambda$ corresponds bijectively to taking the fractional part of $\Omega_{\R}^{-1}\iota(v)$
   \begin{equation*}
    v \bmod \Lambda \leftrightarrow \lfloor \Omega_{\R}^{-1}\iota(v) \rceil.
   \end{equation*}


  \section{Computational aspects}\label{sec:comp_asp}

   \subsection{Complexity analysis}

   We recall the parameters of the problem: we consider a superelliptic curve $\cu$ given by
   $\caff:y^m=f(x)$ with $f \in \C[x]$ separable of degree $n$. The genus $g$ of $\cu$ satisfies
   $$g \leq \frac{(m-1)(n-1)}2=O(mn).$$

   Let $D$ be some desired accuracy (a number of decimal digits). The computation of
   the Abel-Jacobi map on $\cu$ has been decomposed into the
   following list of tasks:
   \begin{enumerate}
       \item computing the $(n-1)$ vectors of elementary integrals,
       \item computing the big period matrix $\Omega=(\OA,\OB)$ \eqref{m-eq:OAOB},
       \item computing the small period matrix $\tau = \OA^{-1}\OB$ \eqref{m-eq:tau},
       \item evaluating the Abel-Jacobi map at a point $P \in \cu$,
   \end{enumerate}
   all of these to absolute precision $D$.

   Let $N(D)$ be the number of points of numerical integration.
   If $m=2$, we have
   $N(D)=O(D)$ using Gauss-Chebychev integration, while $N(D)=O(D\log D)$
   via double-exponential integration.

   For multiprecision numbers, we consider (see \cite{BrentZimmermann}) that the multiplication has
   complexity $\cmul(D)=O(D \log^{1+\varepsilon}D)$,
   while simple transcendental functions (log, exp, tanh, sinh,\dots) can be evaluated
   in complexity $\ctrig(D)=O(D\log^{2+\varepsilon} D$).
    Moreover, we assume that multiplication of a $g \times g$ matrix can
    be done using $O(g^{2.8})$ multiplications.

   \subsubsection{Computation of elementary integrals}
   \label{sec:comp_elem}

   For each elementary cycle $\gamma_e\in \Gamma$, we numerically evaluate the vector of $g$
   elementary integrals from \eqref{m-eq:elem_num_int} as sums of the form
   \begin{equation*}
       I_{a,b} \approx \sum_{k=1}^N w_k\frac{u_k^{i-1}}{y_k^j},
   \end{equation*}
   where $N = N(D)$ is the number of integration points, $w_k,u_k$ are integration weights and points,
   and $y_k=\ytab(u_k)$.

   We proceed as follows:
   \begin{itemize}
   \item for each $k$, we evaluate the absissa and weight $u_k,w_k$ using
       a few \footnote{this can be reduced to evaluating
           a few multiplications and at most one exponential.} trigonometric or hyperbolic functions,
   \item we compute $y_k=\ytab(u_k)$ using $n-2$ multiplications and one $m$-th root,
       as shown in \S \ref{m-subsec:computing_roots} below;
   \item starting from $\frac{w_k}{y_k}$, we evaluate all $g$ terms $w_k\frac{u_k^{i-1}}{y_k^j}$
       each time either multiplying by $u_k$ or by $\frac{1}{y_k}$, and add each to the corresponding
       integral.
   \end{itemize}

   Altogether, the computation of one vector of elementary integrals takes
   \begin{equation}\label{eq:elem_int_complexity}
    \ctot(D) = N(D)\ctrig(D)+N(D)(n-1)\cmul(D)+N(D)g\cmul(D)
   \end{equation}
    operations,
   so that depending on the integration scheme we obtain:
   \begin{thm}\label{thm:complexity_integrals}
       Each of the $(n-1)$ elementary vector integrals can be computed to precision $D$ using
       \begin{equation*}
           O(N(D)\cmul(D)(g+\log D)) =
           \begin{cases}
               O(D^2\log^{1+\varepsilon} D (g + \log D)) \text{ operations, if $m=2$,}\\
               O(D^2\log^{2+\varepsilon} D (g + \log D)) \text{ operations, if $m>2$.}
           \end{cases}
       \end{equation*}
   \end{thm}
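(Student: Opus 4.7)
The plan is to derive the stated bound directly from the cost decomposition $\ctot(D) = N(D)\ctrig(D)+N(D)(n-1)\cmul(D)+N(D)g\cmul(D)$ in \eqref{eq:elem_int_complexity}, which is already justified by the step-by-step analysis just above the theorem. The work splits into three short tasks: comparing $\ctrig$ to $\cmul$, bounding $n-1$ by $g$, and substituting the asymptotics of $N(D)$ for each scheme.

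First I would rewrite the trigonometric cost in terms of the multiplication cost. Since $\ctrig(D) = O(D\log^{2+\varepsilon}D)$ and $\cmul(D)=O(D\log^{1+\varepsilon}D)$, we have $\ctrig(D)=O(\cmul(D)\log D)$, so the first summand contributes $O(N(D)\cmul(D)\log D)$.

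Next I would show $n-1 = O(g)$ to fold the second summand into the third. From \eqref{eq:genus} and $\delta\leq m$,
\begin{equation*}
    g = \tfrac12\bigl((m-1)(n-1)-\delta+1\bigr) \geq \tfrac12\bigl((m-1)(n-1)-m+1\bigr) = \tfrac{(m-1)(n-2)}{2}\geq \tfrac{n-2}{2},
\end{equation*}
so $n-1\leq 2g+1 = O(g)$ whenever $g\geq 1$. Consequently the second and third summands together are $O(N(D)g\,\cmul(D))$, and adding the first one gives
\begin{equation*}
    \ctot(D) = O\!\bigl(N(D)\cmul(D)(g+\log D)\bigr),
\end{equation*}
which is the abstract bound claimed.

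Finally I would substitute the two values of $N(D)$ recalled at the beginning of the complexity analysis: $N(D)=O(D)$ in the Gauss--Chebychev case $m=2$ (from Theorem~\ref{thm:gc_int}) and $N(D)=O(D\log D)$ in the double-exponential case $m>2$ (from Theorem~\ref{thm:de_int}). Multiplying by $\cmul(D)=O(D\log^{1+\varepsilon}D)$ yields the two explicit bounds in the statement. The whole argument is essentially bookkeeping on asymptotic orders; the only mildly non-obvious step is the genus inequality $n-1=O(g)$, and that is the place where I would be most careful to avoid a hidden dependence on $m$ or $\delta$ sneaking into the final bound.
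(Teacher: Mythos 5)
Your proposal is correct and follows exactly the line of reasoning the paper implicitly intends: the theorem is stated directly after the cost decomposition \eqref{eq:elem_int_complexity}, and your three steps (absorbing $\ctrig$ into $\cmul\log D$, the genus inequality $n-1=O(g)$ via $g\geq\tfrac{(m-1)(n-2)}{2}$, and substituting $N(D)$) are precisely the bookkeeping needed to convert that equation into the stated bound. The paper offers no further proof, so nothing is missing or contradicted.
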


   \subsubsection{Big period matrix}

   One of the nice aspects of the method is that we never compute
   the dense matrix $\OC\in\C^{g\times 2g}$ from \eqref{m-eq:OC}, but
   keep the decomposition of periods in terms of the elementary integrals
   $\int_{\gamma_e}\omega_{i,j}$ in $\C^{g\times (n-1)}$.

   Using the symplectic base change matrix $S$ introduced
   in \S \ref{m-subsec:symp_basis}, the symplectic homology basis is given
   by cycles of the form
   \begin{equation}
       \label{eq:base_change_cycles}
       \alpha_i = \sum_{\substack{e \in E \\ l\in\Z/m\Z}} s_{e,l}\gamma_e^{(l)}
   \end{equation}
   where $\gamma_e^{(l)} \in \Gamma$ is a generating cycle
   and $s_{e,l}\in\Z$ is the corresponding entries of $S$.

   We use \eqref{m-eq:periods} to compute the coefficients of the big period
   matrix $(\OA,\OB)$, so that each term of \eqref{m-eq:base_change_cycles}
   involves only a fixed number of multiplications.

   In practice, these sums are sparse and their coefficients are very small integers
   (less than $m$), so that the change of basis is performed using
   $O(g^3D\log^{1+\varepsilon}D)$ operations
   (each of the $O(g^2)$ periods is a linear combination of $O(g)$ elementary integrals,
   the coefficients involving precision $D$ roots of unity).

   However, we have no proof of this fact and in general the symplectic reduction
   could produce dense base change with coefficients of size $O(g)$,
   so that we state the far from optimal result
   \begin{thm}
       Given the $(n-1)\times g$ elementary integrals to precision $D$,
       we compute the big period matrix using $O(g^3(D+g)\log(D+g))$ operations.
   \end{thm}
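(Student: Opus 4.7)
The plan is to bound the cost of producing each entry of $(\OA, \OB)$ separately, then sum over all entries, the only delicate issue being the size of the symplectic base change coefficients.

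First I would count the arithmetic. Having discarded the $\delta-1$ dependent cycles in $\Gamma$, the big period matrix has $g \cdot 2g = 2g^2$ entries. By the base-change relation \eqref{m-eq:base_change_cycles} combined with Theorem~\ref{m-thm:periods}, each such entry can be written as a sum of at most $(n-1)(m-1) = O(g)$ terms of the form
\begin{equation*}
s_{e,l} \cdot \zeta^{-lj}(1-\zeta^{-j}) C_{a,b}^{-j} \left(\frac{b-a}{2}\right)^{i} \cdot I_{a,b}(i,j),
\end{equation*}
where $I_{a,b}(i,j)$ is a given elementary integral, the middle factor is a precision-$D$ complex number depending only on the edge $e=(a,b)$ and the differential $\omega_{i,j}\in\W$ (and can be precomputed in dominated cost), and $s_{e,l}\in\Z$ is an entry of the symplectic base change matrix $S$.

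Second, I would account for the size of the $s_{e,l}$. The algorithm of \cite[Thm.~18]{KB2002} applied to the $(n-1)(m-1)$-dimensional skew-symmetric matrix $K_\Gamma$ guarantees integrality of $S$ but does not provide any uniform control on the magnitude of its entries; in the worst case they can have bit-size $O(g)$. To preserve the output precision $D$ when summing $O(g)$ terms scaled by integers of that size, I would carry out the entire linear combination at working precision $D + O(g)$. Each multiplication at this precision costs $\cmul(D+g) = O((D+g)\log^{1+\varepsilon}(D+g))$.

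Combining the pieces, $2g^2$ entries times $O(g)$ multiplications per entry at cost $\cmul(D+g)$ yields the stated bound $O(g^3(D+g)\log(D+g))$, after absorbing the $\log^{\varepsilon}$ factor. The main obstacle here is exactly what the paragraph preceding the theorem flags: the pessimistic $O(g)$-bit worst case on $S$. If one could prove that the symplectic reduction always produces a sparse $S$ with $O(1)$-bit entries, which is what is observed experimentally, the bound would immediately sharpen to $O(g^3 D \log^{1+\varepsilon} D)$; but without such a structural result one has no choice but to budget for the inflated precision $D+g$.
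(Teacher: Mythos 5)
Your argument reconstructs exactly the reasoning that the paper sketches in the paragraph preceding the theorem (the paper offers no formal proof, only this discussion): $O(g^2)$ entries, each a linear combination of $O(g)$ terms via the symplectic base change \eqref{m-eq:base_change_cycles} and equation \eqref{m-eq:periods}; since the entries of $S$ have no a priori bound better than $O(g)$ bits, the working precision is inflated to $D+O(g)$, and each multiplication costs $\cmul(D+g) = O((D+g)\log^{1+\varepsilon}(D+g))$, giving $O(g^3(D+g)\log(D+g))$. You also correctly echo the paper's remark that the observed sparsity and smallness (entries $< m$) of $S$ would bring this down to $O(g^3 D\log^{1+\varepsilon}D)$ in practice, but that no proof of this is available. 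This is the same approach as the paper and your count and precision budget are correct.
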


   \subsubsection{Small period matrix}

   Finally, the small period matrix is obtained by solving $\OA\tau=\OB$,
   which can be done using $O(g^{2.8})$ multiplications.

   \subsubsection{Abel-Jacobi map}

  This part of the complexity analysis is based on the results of Section \ref{m-sec:comp_ajm} and assumes that we have already computed a big period matrix and all related data.

  Let $\ctot(D)$ be the number of operations needed to compute a vector of $g$ elemenatary integrals  (see \eqref{m-eq:elem_int_complexity}). The complexity class of $\ctot(D)$ in $O$-notation is given in
  Theorem \ref{m-thm:complexity_integrals}.

   \begin{thm} \
   \begin{itemize}
     \item[{\upshape{(i)}}] For each finite point $P \in \caff$ we can compute $\int_{P_0}^P \bar\w$ to precision $D$ using
      $\ctot(D)$ operations.
     \item[{\upshape{(ii)}}] For each infinite point $P_{\infty} \in \cu$ we can compute a representative of $\int_{P_0}^{P_{\infty}} \bar\w \mod \Lambda$ to precision $D$ using
      \begin{itemize}
       \item[$\bullet$] $n$ vector additions in $\C^g$, if $\delta = \gcd(m,n) = 1$,
       \item[$\bullet$] $n \ctot(D)$ operations in the case of Theorem \ref{m-thm:ajm_inf_ord1},
       \item[$\bullet$] $n(n+\frac{m}\delta)\ctot(D)$ operations  in the case of Theorem \ref{m-thm:ajm_inf_ordgt1}.
      \end{itemize}
      \item[{\upshape{(iii)}}] Reducing a vector $v \in \C^g$ modulo $\Lambda$ can be done using $O(g^{2.8})$ multiplications.
    \end{itemize}
  \end{thm}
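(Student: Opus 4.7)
The plan is to reduce each statement to facts already established: (i) follows from Theorem \ref{m-thm:ajm_finite_int} together with the fact that all integrals between ramification points along the spanning tree are already contained in the elementary vectors used for the period matrix; (ii) follows by unpacking the three constructions in \S \ref{m-subsec:ajm_infty} and counting how many integrals of type (i) they require; (iii) is a direct linear algebra computation with the real matrix $\Omega_{\R}$.

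For (i), pick the ramification point $P_a$ closest to $P$ along the spanning tree, i.e.\ with $X\cap\,]a,x_P]=\varnothing$. Decompose
\begin{equation*}
    \int_{P_0}^{P}\bar\w \;=\; \int_{P_0}^{P_a}\bar\w \;+\; \int_{P_a}^{P}\bar\w,
\end{equation*}
where the first term is a sum of integrals between ramification points along edges of the spanning tree, each already obtained as a scaled elementary integral in the period matrix phase (cf.\ \S \ref{m-subsec:ajm_ram_pts}), and hence requires no further numerical integration. The second term is given by Theorem \ref{m-thm:ajm_finite_int} as one integral of the form $\int_{-1}^{1}\varphi_{i,j}(u)(1+u)^{-j/m}\du$ for each $\omega_{i,j}\in\W$, which is a numerical quadrature of exactly the same shape as an elementary integral (with singularity only at one endpoint). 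By Remark \ref{m-rmk:ajm_finite_int} the double-exponential scheme of Theorem \ref{m-thm:de_int} applies with the same asymptotic cost, so evaluating this vector of $g$ integrals takes $\ctot(D)$ operations, which dominates.

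For (ii), in the coprime case $\delta=1$ the identity of \S \ref{m-subsec:ajm_inf_cop} gives
\begin{equation*}
    \AJ([D_\infty]) \equiv \nu\sum_{k=1}^n \int_{P_0}^{P_k}\bar\w \mod \Lambda,
\end{equation*}
and all vector integrals on the right are already available from the period matrix computation, so the total cost is $n$ vector additions in $\C^g$. In the regime of Theorem \ref{m-thm:ajm_inf_ord1} we have $\AJ([D_\infty^{(s)}])$ expressed as a sum of $n-1$ integrals $\int_{P_0}^{Q_i^{(s)}}\bar\w$, each computed in $\ctot(D)$ operations by~(i); counting also the lookup of the previously computed integrals between ramification points gives the $n\,\ctot(D)$ bound. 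In the regime of Theorem \ref{m-thm:ajm_inf_ordgt1} there are $d-1$ integrals with $d=n(\nu+M)$ and $M=m/\delta$; using the standard bound $\nu\leq m/\delta\leq n+m/\delta$ obtained from Bézout, we get $d-1\leq n(n+m/\delta)$ integrals, each costing $\ctot(D)$, which yields the claimed bound. The correction terms from Remark \ref{m-rmk:zero_bp} in the case $0\in X$ add only $O(1)$ further calls to~(i) and do not change the asymptotics.

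For (iii), once the big period matrix $\Omega=(\OA,\OB)$ is known we form $\Omega_{\R}\in\R^{2g\times 2g}$ as in \S \ref{m-subsec:lat_red}. Reducing $v\in\C^g$ amounts to solving the real linear system $\Omega_{\R} x=\iota(v)$ and taking the fractional part of $x$ componentwise. With the assumed matrix multiplication exponent $2.8$, inversion (or a single solve via LU decomposition) costs $O(g^{2.8})$ multiplications, and the final fractional-part step is linear in $g$; this yields the bound. The main obstacle, if any, is the bookkeeping in case (ii), where one must verify that the principal divisors $D_1, D_2$ constructed in \S \ref{m-subsec:ajm_inf_ncop} have the claimed intersections with $\mathcal P$ and that all extra contributions ($N\sum_{Q\in\pr^{-1}(0)}Q$, ramification summands) either vanish modulo $\Lambda$ or reduce to integrals already on hand—but this has already been carried out in the theorems themselves, so the complexity statement becomes a direct counting argument.
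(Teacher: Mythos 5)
Your proof is correct and follows essentially the same route as the paper: the paper's own proof is a terse pointer to \S\ref{m-subsec:ajm_ram_pts}, Remark \ref{m-rmk:ajm_finite_int}, \S\ref{m-subsec:ajm_inf_cop}, Theorems \ref{m-thm:ajm_inf_ord1}--\ref{m-thm:ajm_inf_ordgt1}, and \S\ref{m-subsec:lat_red}, and you merely unpack it with the same decomposition and counting. The only slight imprecision is your stated chain $\nu\leq m/\delta\leq n+m/\delta$ to bound $d-1$ in the third bullet of (ii) (what one actually wants is $\nu\leq n$ so that $d=n(\nu+M)\leq n(n+m/\delta)$), but this is a cosmetic issue in a complexity estimate and does not affect the argument.
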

   \begin{proof}
    \begin{itemize}
     \item[(i)] Follows from combining the results from \S \ref{m-subsec:ajm_ram_pts} and Remark \ref{m-rmk:ajm_finite_int}.
     \item[(ii)] The statements follow immediately from \S \ref{m-subsec:ajm_inf_cop}, Theorem \ref{m-thm:ajm_inf_ord1} and Theorem \ref{m-thm:ajm_inf_ordgt1}.
     \item[(iii)] By \S \ref{m-subsec:lat_red}, the reduction modulo the period lattice requires one $2g \times 2g$ matrix inversion and one multiplication.
    \end{itemize}
   \end{proof}

   \subsection{Precision issues}

   As explained in \S \ref{subsec:arb}, the ball arithmetic model allows
   to certify that the results returned by the Arb program \cite{Johansson2013arb} are correct.
   It does not guarantee that the result actually achieves the desired
   precision.

   As a matter of fact, we cannot prove a priori that bad accuracy loss
   will not occur while summing numerical integration terms or
   during matrix inversion.

   However, we take into account all predictable loss of precision:
   \begin{itemize}
       \item While computing the periods using equations \eqref{eq:periods}
           and \eqref{eq:polshift}, we compute a sum with coefficients
           \begin{equation*}
               C_{a,b}^{-j} \left(\frac{b-a}{2}\right)^i
               {i-1 \choose l} \left(\frac{b+a}{b-a}\right)^{i-1-l}
           \end{equation*}
           whose magnitude can be controlled a priori. It has size $O(g)$.
       \item The size of the coefficients of the symplectic reduction
           matrix are tiny (less than $m$ in practice), but we can take
           their size into account before entering the numerical steps.
           Notice that generic HNF estimates lead to a very pessimistic
           estimate of size $O(g)$ coefficients.
       \item Matrix inversion of size $g$ needs $O(g)$ extra bits.
   \end{itemize}
   This leads to increasing the internal precision from $D$ to $D + O(g)$,
   the implied constant depending on the configuration of branch points.

   \begin{rmk}
   In case the end result is imprecise by $d$ bits, the user simply needs to
   run another instance to precision $D+d$.
   \end{rmk}

   \subsection{Integration parameters}

   \subsubsection{Gauss-Chebychev case}

   Recall from \S \ref{m-subsec:gauss_chebychev_integration} that we can parametrize the ellipse $ε_r$ via
   $$ε_r = \set{ \cosh(r+it) = \cos(t-ir), t\in]-π,π] }.$$

   \begin{figure}[H]
       \begin{center}
       \includegraphics[width=5cm,page=3]{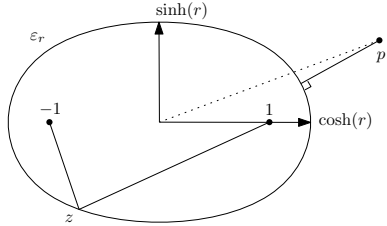}
   \end{center} \caption{ellipse parameters.}
   \label{fig:ellipse2}
   \end{figure}

   The sum of its semi-axes is $e^{r}$
   and one needs
   \[
       N \geq \frac{D+\log(2πM(r)+e^{-D})}{2r}
   \]
   to have $\abs{E(N)}\leq e^{-D}$.

   The distance $d_k=\dist(u_k,ε_r)$ from a branch point $u_k$
   to the ellipse $ε_r$ can be computed
   applying Newton's method to the scalar product function
   $s(t) = \Re(\overline{z'}(u_k-z))$, where $z = \cos(t-ir)$ and
   we take $t=\Re(\arccos(u_k))$ as a starting point.
   By convexity of the ellipse,
   the solution is unique on the quadrant containing $u_k$.

   \paragraph{Choice of $r$}\label{par:gc_int_r}

   Let $\abs{u_k-1}+\abs{u_k+1}=2\cosh(r_k)$. We need to choose
   $r<r_0=\min_k r_k$ (so that $u_k\not\in ε_r$) in order to minimize
   the number of integration points \eqref{m-eq:Ngc}. We first
   estimate how the bound $M(r)$ varies for $r<r_0$.
   \begin{itemize}
       \item
   For all $k$ such that $r_k > r_0$, we compute
   explicitly the distance $d_k=\dist(u_k,ε_{r_0})<\dist(u_k,ε_r)$.
   \item
   For $k$ such that $r_k=r_0$, we use first order approximation
   \[ \dist(u_k,Z_{r-η}) = η D_k + O(η^2) \],
   where $D_k = \abs{\frac{\partial u_k}{\partial r_k}} = \abs{\sin(t_k-ir_k)}$.
   \end{itemize}

   Let $K$ be the number of branch points $u_k$ such that $r_k=r_0$ and
   \[ M_0 = \sqrt{\prod_{r_k = r_0} D_k\prod_{r_k>r_0}d_k}^{-1}, \]
   then the integrand is bounded on $ε_{r_0-η}$ by
   \[ M(r_0-η) = M_0 \sqrt{η}^{-K} (1+O(η)). \]
   Plugging this into \eqref{eq:Ngc}, the number of integration points
   satisfies
   \[
       2N = \frac{D+\log(2πM_0) - K/2 \log(η) }{r_0-η}(1+O(η)).
   \]

   The main term is minimized for $η$ satisfying
   $η\left(2\frac{D+\log(2πM_0)}K+1-\log(η)\right)=r_0$. The solution
   can be written as a Lambert function or we use
   the approximation
   \[ r = r_0 - η = r_0 \left( 1 - \frac{1}{A+\log\frac{A}{r_0}} \right), \]
   where $A = 1+\frac2K(D+\log(2πM_0))$.

   \subsubsection{Double-exponential case}\label{subsec:de_case}

   For the double-exponential integration (\S \ref{m-subsec:de_int})
   we use the parametrization
   $$\partial Z_r = \set{ z = \tanh(λ\sinh(t+ir)), t\in\R }$$ to compute
   the distance from a branch point $u_k$ to $Z_r$ by Newton's method
   as before.

   Unfortunately, the solution may not be unique, so once
   the parameter $r<r_0$ is chosen (see below), we use ball arithmetic to compute a rigorous
   bound of the integrand on the boundary of $Z_r$. The process consists in
   recursively subdividing the interval until the images of the subintervals by the
   integrand form an $ε$-covering.

   \paragraph{Choice of $r$}

   We adapt the method used for Gauss-Chebychev. This time the number $N$ of integration
   points is obtained from equation \eqref{m-eq:de_parameters}.

   Writing $u_k = \tanh(λ\sinh(t_k+ir_k))$, we must choose
   $r<r_0=\min_k \{r_k\}$ to ensure $u_k\not\in Z_r$. Let
   \[ M_0 = (\prod_{r_k = r_0} D_k\prod_{r_k>r_0}d_k)^{-j/m} \]
   where $d_k=\dist(u_k,Z_{r_0})<\dist(u_k,Z_r)$ and
   \[ D_k = \abs{\frac{\partial u_k}{\partial r_k}} = \abs{\frac{λ \cosh(t_k+ir_k)}{\cosh(λ\sinh(t_k+ir_k))^2}} \]
   is such that $\dist(u_k,Z_{r-η}) = η D_k + O(η^2)$, then
   the integrand is bounded on $Z_{r_0-η}$ by
   \[ M_2 = M_0 η^{-\frac{jK}m} (1+O(η)). \]
   Then
   \[ h = \frac{2π(r_0-η)}{D+\log(2B(r_0,α)M_0)-jK/m\log(η)}+O(η) \]
   and the maximum is obtained for $η$ solution of $η(A-\log η)=r_0$
   where $A=1+\frac{m}{jK}(D+\log(2B(r_0,α)M_0))$.

   \subsection{Implementation tricks}

   Here we simply give some ideas that we used in our implementation(s) to improve constant factors hidden in the big-$O$ notation, i.e. the absolute running time.

   In practice, 80 to 90\% of the running time is spent on numerical integration
   of integrals \eqref{m-eq:periods}. According to \S\ref{m-sec:comp_elem},
   for each integration point $u_k\in]-1,1[$ one first evaluates the $y$-value
   $y_k=\ytab(u_k)$, then adds the contributions $w_k\frac{u_k^i}{y_k^j}$ to
   the integral of each of the $g$ differential forms.

   We shall improve on these two aspects, the former being prominent for hyperelliptic curves,
   and the latter when the $g \gg n$.

    \subsubsection{Computing products of complex roots}\label{subsec:computing_roots}

    Following our definition \eqref{m-eq:def_yab}, computing $\ytab(u_k)$ involves
    $(n-2)$ $m$-th roots for each integration point.

    Instead, we fall back to one single (usual) $m$-th root
    by computing $q(u)\in\frac12\Z$ such that
  \begin{equation}
      \label{eq:ytab_comp}
      \ytab(u) = \zeta^{q(u)} \Big( \prod_{u_k\in U^-}(u-u_k) \prod_{u_k\in U^+} (u_k-u) \Big)\mr.
  \end{equation}
  This can be done by tracking
  the winding number of the product while staying away from the branch cut
  of the $m$-th root.
  For complex numbers $z_1,z_2 \in \C$ we can make a diagram of
  $\frac{\sqrt[m]{z_1}\sqrt[m]{z_2}}{\sqrt[m]{z_1z_2}} \in \{ 1, \zeta,
  \zeta^{-1} \}$, depending on the position of $z_1,z_2$ and their product
  $z_1z_2$ in the complex plane, resulting in the following lemma:

  \begin{lemma}\label{lemma:wind_numb}
  Let $z_1,z_2 \in \C  \setminus  ]\infty,0]$. Then,
  $$\frac{\sqrt[m]{z_1}\sqrt[m]{z_2}}{\sqrt[m]{z_1z_2}} = \begin{cases}
                                                           \zeta, \quad \text{if} \quad \Im(z_1), \Im(z_2) > 0 \quad \text{and} \quad \Im(z_1z_2) < 0 , \\
                                                           \zeta^{-1}, \text{if} \quad \Im(z_1), \Im(z_2) < 0 \quad \text{and} \quad \Im(z_1z_2) > 0 , \\
                                                           1, \quad \text{otherwise}.
                                                         \end{cases}$$
   For $z \in ]\infty,0]$ we use $\sqrt[m]{z} = \zeta^{\frac{1}{2}} \cdot \sqrt[m]{-z}$.
  \end{lemma}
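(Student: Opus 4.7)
The plan is to reduce the statement to a case analysis on principal arguments. Recall from \S\ref{subsec:roots_branches} that $\sqrt[m]{z}$ is defined by $-\pi/m < \arg(\sqrt[m]{z}) \leq \pi/m$, equivalently, by $\sqrt[m]{z} = |z|^{1/m} e^{i\arg(z)/m}$ where $\arg(z)\in(-\pi,\pi]$ is the principal argument. Hence for $z_1,z_2\in\C\setminus\,]-\infty,0]$ we have
\begin{equation*}
\frac{\sqrt[m]{z_1}\sqrt[m]{z_2}}{\sqrt[m]{z_1z_2}} \;=\; \exp\!\left(\tfrac{i}{m}\bigl(\arg(z_1)+\arg(z_2)-\arg(z_1z_2)\bigr)\right).
\end{equation*}
Since $\arg$ is additive modulo $2\pi$ and both $\arg(z_1)+\arg(z_2)\in(-2\pi,2\pi)$ and $\arg(z_1z_2)\in(-\pi,\pi]$, the quantity $\Delta := \arg(z_1)+\arg(z_2)-\arg(z_1z_2)$ lies in $\{-2\pi,0,2\pi\}$, so the ratio is in $\{1,\zeta,\zeta^{-1}\}$.

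Next I would determine which case occurs. One has $\Delta = 2\pi$ exactly when $\arg(z_1)+\arg(z_2) > \pi$; since each summand is at most $\pi$, this forces both $\arg(z_i) \in (0,\pi)$, i.e.\ $\Im(z_i) > 0$, and the inequality $\arg(z_1)+\arg(z_2) > \pi$ is then equivalent to $\arg(z_1z_2) < 0$, i.e.\ $\Im(z_1 z_2) < 0$. This gives the first row. The case $\Delta = -2\pi$ is perfectly symmetric: both $\Im(z_i)<0$ and $\Im(z_1z_2)>0$, yielding $\zeta^{-1}$. In every other configuration $\Delta = 0$ and the ratio is $1$.

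Finally, the trailing formula for $z\in\,]-\infty,0]$ is just a convention extending the definition consistently with the principal branch: writing $z = -|z| = |z|e^{i\pi}$, the principal branch prescription gives $\sqrt[m]{z} = |z|^{1/m}e^{i\pi/m} = \zeta^{1/2}\sqrt[m]{-z}$, which one verifies directly (the only choice to make is whether to send $\arg=\pi$ to $+\pi/m$ or $-\pi/m$, and the convention adopted here is $+\pi/m$, consistent with the half-open interval $(-\pi/m,\pi/m]$).

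No real obstacle is anticipated; the only subtlety is to be careful with the boundary values of $\arg$, which is why the negative real axis is excluded from the main statement and treated separately. The case analysis is finite, elementary, and driven entirely by the half-plane in which each of $z_1$, $z_2$, $z_1 z_2$ sits.
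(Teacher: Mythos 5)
Your proof is correct and fills in exactly the elementary argument the paper leaves implicit — the published ``proof'' is the single line ``Follows from the choices for $\sqrt[m]{\cdot}$ and $\zeta$ that were made in \S 3.2,'' so there is no published derivation to compare against; your reduction to the quantity $\Delta = \arg(z_1)+\arg(z_2)-\arg(z_1z_2)\in\{-2\pi,0,2\pi\}$ and the subsequent three-way case analysis is the standard way to make that one-liner precise. One small point worth stating explicitly, since you gesture at it but do not quite nail it down: in the ``otherwise'' branch you should note that the hypotheses $z_1,z_2\notin\,]\!-\!\infty,0]$ force $\arg(z_i)\in(-\pi,\pi)$ strictly, which is what guarantees $\arg(z_1)+\arg(z_2)\in(-2\pi,2\pi)$ and hence $\Delta\in\{-2\pi,0,2\pi\}$; and that the formula remains valid even when $z_1z_2$ lands on $\,]\!-\!\infty,0]$, since then $\arg(z_1z_2)=\pi$ and the case $\Delta=0$ still covers it (e.g.\ $\Im(z_1),\Im(z_2)>0$ with $\arg(z_1)+\arg(z_2)=\pi$ gives $\Im(z_1z_2)=0$ and the ratio is $1$, consistent with the ``otherwise'' row).
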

  \begin{proof}
   Follows from the choices for $\sqrt[m]{\cdot}$ and $\zeta$ that were made in \S \ref{m-subsec:roots_branches}.
  \end{proof}
  Lemma \ref{lemma:wind_numb} can easily be turned into an algorithm that computes $q(u)$.

   \subsubsection{Doing real multiplications}\label{subsec:real_mult}

   Another possible bottleneck comes from the multiplication by the numerator
   $u_k$, which is usually done $g-m-1$ times for each of
   the $N$ integration points (more precisely, as we saw in the proof of Proposition \ref{m-prop:holom_diff}, for each exponent $j$
   we use the exponents $0\leq i\leq n_i = \floor{\frac{nj-δ}m}$, with $\sum n_i = g$).

   Without polynomial shift \eqref{eq:polshift}, this numerator should be
   $x_k=u_k+\frac{b+a}{b-a}$. However, $x_k$ is a complex number while $u_k$
   is real, so computing with $u_k$ saves a factor almost $2$ on this aspect.

   \subsection{Further ideas}

   \subsubsection{Improving branch points}

   As we saw in Section \ref{m-sec:numerical_integration}, the number of integration points
   closely depends on the configuration of branch points.

   In practice, when using double-exponential integration, the constant $r$ is usually bigger than $0.5$
   for random points, but we can exhibit bad configurations with $τ\approx 0.1$.
   In this case however, we can perform a change of coordinate by a Moebius transform
   $x\mapsto \frac{ax+b}{cx+d}$ as explained in Remark \ref{m-rmk:moebius} to redistribute the points more evenly.

   Improving $τ$ from $0.1$ to say $0.6$ immediately saves a factor $6$ on the running time.

   \subsubsection{Near-optimal tree}
    As explained in \S \ref{m-subsec:cycles_homo} we integrate along the edges of a maximal-flow spanning tree $T = (X,E)$, where the capacity $r_e$ of an edge $e = (a,b) \in E$ is computed as
    \begin{equation*}
     r_e =  \min_{c \in X \setminus \{ a,b\}} \begin{cases}
         \frac{\abs{c-a}+\abs{c-b}}{\abs{b-a}}, \text{ if $m=2$,}\\
         \abs{ \Im(\sinh^{-1}(\tanh^{-1}(\frac{2c - b -a}{b-a})/\lambda) }, \text{ if $m>2$.}
            \end{cases}
    \end{equation*}
    Although this can be done in low precision, computing $r_e$ for all $(n-1)(n-2)/2$ edges of the complete graph
    requires $O(n^3)$ evaluation of elementary costs (involving transcendantal functions if $m>2$).

    For large values of $n$ (comparable to the precision), the computation of
    these capacities has a noticable impact on the running time. This can be
    avoided by computing a \emph{minimal spanning tree} that uses the euclidean
    distance between the end points of an edge as capacity, i.e.
    $r_e = \abs{b-a}$, which reduces the complexity to $O(n^2)$
    multiplications.

    Given sufficiently many branch points that are randomly distributed in the
    complex plane, the shortest edges of the complete graph tend to agree with
    the edges that are well suited for integration.

    \subsubsection{Taking advantage of rational equation}

    In case the equation \eqref{eq:aff_model} is given by a polynomial $f(x)$
    with small rational coefficients, one can still improve the computation
    of $\ytab(u)$ in \eqref{eq:ytab_comp} by going back to
    the computation of $y(\xab(u))=f(x)^{\frac1m}$.
    The advantage is that baby-step giant-step splitting can be used for
    the evaluation of $f(x)$, reducing the number of multiplications to
    $O(\sqrt{n})$. In order to recover $\ytab(u)$, one needs to divide by
    $\sqrt[m]{1-u^2}$ and adjust a multiplicative constant including
    the winding number $q(u)$, which can be evaluated at low precision.
    This technique must not be used when $u$ gets close to $\pm1$.

    \subsubsection{Splitting bad integrals or moving integration path}

    Numerical integration becomes very bad when there are other branch points
    relatively close to an edge. The spanning tree optimization does not help
    if some branch points tend to cluster while other are far away.
    In this case, one can always split the bad integrals to improve the
    relative distances of the singularities. Another option with
    double exponential integration is to shift the integration path.

  \section{Examples and timings}\label{sec:examples_timings}

    \newcommand{\bern}{\mathcal{B}}
  \newcommand{\bernrev}{\widetilde{\bern}}

  For testing purposes we consider a family of curves given by Bernoulli polynomials
          \begin{equation*}
              \bern_{m,n} : y^m = B_n(x) = \sum_{k=0}^n\binom nkb_{n-k}x^k
          \end{equation*}
  as well as their reciprocals
          \begin{equation*}
              \bernrev_{m,n} : y^m = x^nB_n\left(\frac1x\right).
          \end{equation*}

  The branch points of these curves present interesting patterns which can be respectively considered
  as good and bad cases from a numerical integration perspective (Figure \ref{m-fig:roots_bern}).

  \begin{figure}[H]
      \begin{center}
          \subfloat[$\bern_{m,8}$]{ \includegraphics[width=.5\linewidth,page=1]{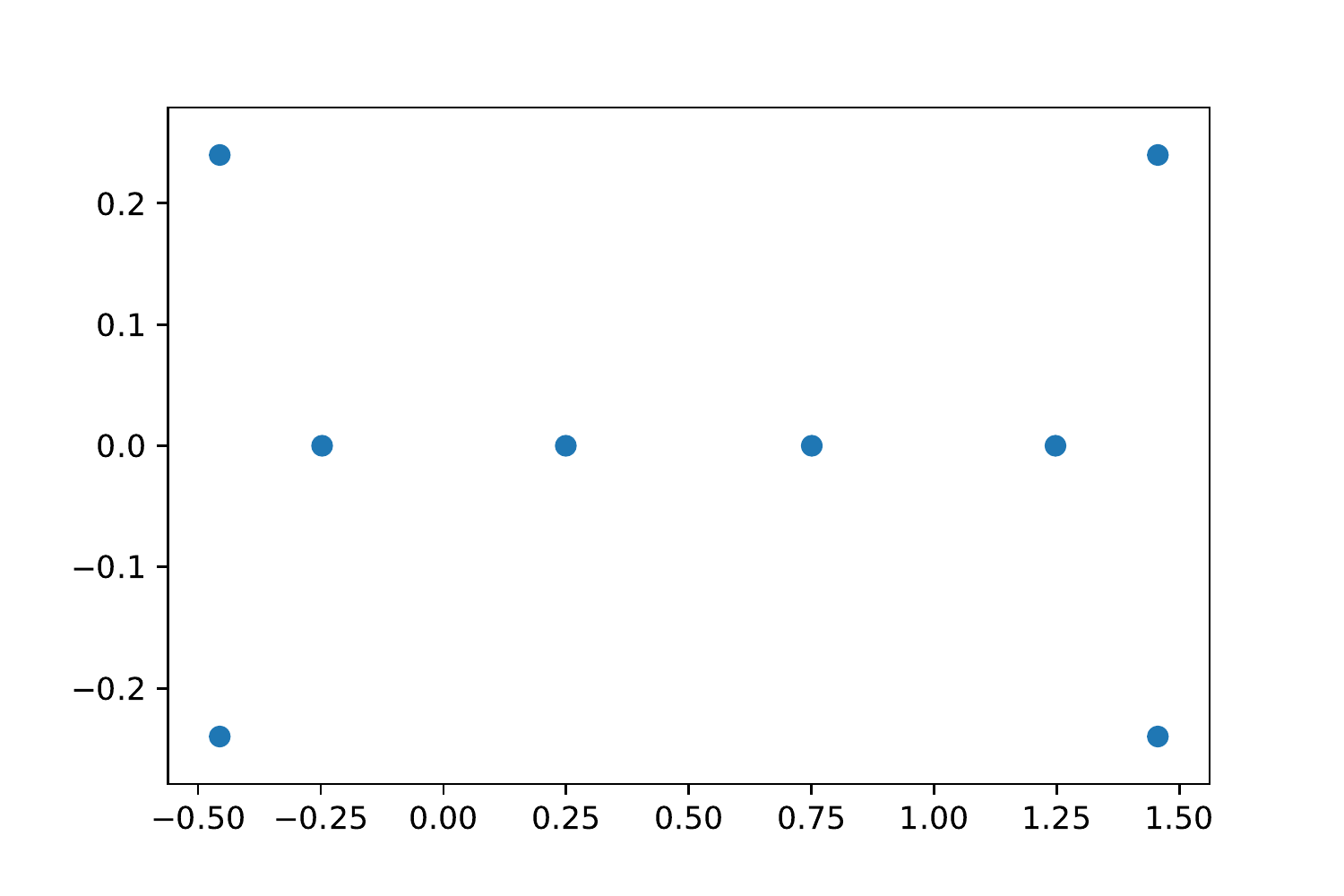} }
          \subfloat[$\bern_{m,30}$]{ \includegraphics[width=.5\linewidth,page=2]{roots_bern.pdf} }\\
          \subfloat[$\bernrev_{m,8}$]{ \includegraphics[width=.5\linewidth,page=3]{roots_bern.pdf} }
          \subfloat[$\bernrev_{m,30}$]{ \includegraphics[width=.5\linewidth,page=4]{roots_bern.pdf} }
      \end{center}
      \caption{configurations of branch points.}
  \label{fig:roots_bern}
  \end{figure}

  In the case of hyperelliptic curves, we compare our timings with the existing
  Magma code \cite{vanWamelen06}. We obtain a huge speedup which is mostly due
  to the better integration scheme, but more interesting is the fact that the
  running time of our algorithm mainly depends on the genus and the precision,
  while that of Magma depends a lot on the branch points and behaves very badly
  in terms of the precision.

  \begin{table}[H]
      \begin{center}
          \begin{tabular}{lllrrrrr}
              \toprule
              & & \hfill bits & 128 & 512 & 2000 & 4000 & 10000 \\
              genus & curve & \hfill digits & 38 & 154 & 600 & 1200 & 3000 \\
              \midrule
              3 & $\bern_{2,8}$
              &   Arb         & 5e-3  & 0.01  & 0.16    & 0.48     & 3.99 \\
              & & Magma (new) & 0.05  & 0.08  & 0.44   & 2.16    & 25.3 \\
              & & Magma (old) & 0.33  & 0.44  & 6.28   & 421  & --- \\
              \cmidrule{3-8}
              & $\bernrev_{2,8}$
              &   Arb         & 5e-3  & 0.01  & 0.17    & 0.54      & 4.58 \\
              & & Magma (new) & 0.06  & 0.11  & 0.67   & 3.42     & 40.6 \\
              & & Magma (old) & 0.42  & 0.45  & 6.44   & 457   & --- \\
              \midrule
              14 & $\bern_{2,30}$
                & Arb         & 0.05  & 0.22   & 1.99    & 8.74     & 80.9 \\
              & & Magma (new) & 0.55  & 0.94  & 4.64   & 18.7   & 185.1 \\
              & & Magma (old) & 5.15  & 10.1 & 134 & 9291 & --- \\
              \cmidrule{3-8}
              & $\bernrev_{2,30}$
              &   Arb         & 0.05  & 0.23   & 2.11    & 9.31      & 87.8 \\
              & & Magma (new) & 0.51  & 1.02  & 5.40   & 21.9    & 227 \\
              & & Magma (old) & 14.8  & 42.6  & 370 & 12099 & --- \\
              \midrule
              39 & $\bern_{2,80}$
              &   Arb         & 0.69 & 1.64   & 16.1   & 70.5    & 601 \\
              & & Magma (new) & 6.29  & 9.08  & 36.4  & 122  & 1024 \\
              \bottomrule
          \end{tabular}
          \caption{timings for hyperelliptic curves, single core Xeon E5 3GHz (in seconds).}
      \end{center}
  \end{table}

  \begin{table}[H]
      \begin{center}
          \begin{tabular}{lllrrrrr}
              \toprule
              & & \hfill bits & 128 & 512 & 2000 & 4000 & 10000 \\
              genus & curve & \hfill digits & 38 & 154 & 600 & 1200 & 3000 \\
              \midrule
              21 & $\bern_{7,8}$
              &   Arb         & 0.06 & 0.27   & 4.25    & 29.5    & 455 \\
              & & Magma (new) & 0.23  & 1.06  & 14.6  & 83.1   & 1035 \\
              \cmidrule{3-8}
              & $\bernrev_{7,8}$
              &   Arb         & 0.03 & 0.19  & 7.44    & 58.8   & 1027 \\
              & & Magma (new) & 0.30  & 1.64  & 23.9  & 132 & 1613 \\
              \midrule
              84 & $\bern_{25,8}$
              &   Arb         & 0.09   & 0.45   & 8.86     & 55.6    & 727 \\
              & & Magma (new) & 0.74   & 2.60   & 27.2   & 135  & 1529 \\
              \midrule
              87 & $\bern_{7,30}$
              &   Arb         & 2.05  & 6.46   & 43.9   & 249   & 3091 \\
              & & Magma (new) & 2.29  & 10.0 & 93.8  & 461  & 4990 \\
              \midrule
              348 & $\bern_{25,30}$
              &   Arb         & 2.82   & 9.57    & 101  & 557   & 6195 \\
              & & Magma (new) & 19.9  & 41.4  & 234  & 1014 & 9614 \\
              \midrule
              946 & $\bern_{25,80}$
              &   Arb         & 67.8  & 182  & 952   & 4330 \\
              & & Magma (new) & 369 & 585 & 2132 & 7474 \\
              \bottomrule
          \end{tabular}
          \caption{timings for superelliptic curves, single core Xeon E5 3GHz (in seconds).}
      \end{center}
  \end{table}

  \newpage

  \section{Outlook}\label{sec:outlook}

  In this paper we presented an approach based on numerical integration for
  multiprecision computation of period matrices and the Abel-Jacobi map of
  superelliptic curves given by $m > 1$ and squarefree $f \in \C[x]$.
 
  Integration along a spanning tree and the special geometry of such curves
  make it possible to compute these objects too high precision performing only
  a few numerical integrations. The resulting algorithm has an excellent
  scaling with the genus and works for several thousand digits of precision.

  \subsection{Reduced small period matrix}

   For a given curve our algorithm computes a small period matrix
   $\tau$ in the Siegel upper half-space $\mathcal{H}_g$ which is arbitrary
   in the sense that it depends on the choice of a symplectic basis made
   during the algorithm.
   
   For applications like the computation of theta functions it is useful to
   have a small period matrix in the Siegel fundamental domain $\mathcal{F}_g \subset
   \mathcal{H}_g$ (see \cite[\S 1.3]{PlaneQuarticsCM}).
  
   We did not implement any such reduction.
   The authors of \cite{PlaneQuarticsCM} give a theoretical sketch of
   an algorithm (Algorithm 1.9) that achieves this reduction step, as well as
   two practical versions (Algorithms 1.12 and 1.14) which work in any genus and have been implemented for $g
   \le 3$. It would be interesting to combine this with our implementation.
  
  \subsection{Generalizations}
  
  We remark that there is no theoretical obstruction to generalizing our
  approach to more general curves. In a first step the algorithm could be
  extended to all complex superelliptic curves given by $m > 1$ and $f \in
  \C[x]$, where $f$ can have multiple roots of order at most $m-1$.
  Although several adjustments would have to be made (e.g.\ differentials,
  homology, integration), staying within the superelliptic setting promises
  a fast and rigorous extension of our algorithm. 
  
  \medskip
  
  We also believe that the strategy employed here (numerical integration between
  branch points combined with information about local intersections) could
  be adapted to completely general algebraic curves given by $F \in \C[x,y]$.
  However, serious issues have to be overcome:
  \begin{itemize}
      \item On the numerical side we no longer have a nice $m$-th root function, it may be replaced by
          Newton's method between branch points (analytic continuation has to be performed on all sheets) and
          Puiseux series expansion around them.
      \item On the geometric side we cannot easily define loops, so that given a set
          of ``half'' integrals each connecting two branch points, we need to combine them in order
          to obtain all at once true loops and a symplectic basis.
          An appropriate notion of shifting number and local intersection is needed here,
          as well as a combination technique.
  \end{itemize}
  We did not investigate further: at this point the advantages
  of superelliptic curves which are utilized by our approach are already lost
 (simple geometry of branch points and $m-1$ integrals at the cost of one), 
  so it is
  not clear whether this approach might be more efficient than other methods.

\newpage
\bibliographystyle{plain}
\bibliography{./main}

\end{document}